\numberwithin{equation}{section}
\newtheorem{Thm}[equation]{Theorem}
\newtheorem{Prop}[equation]{Proposition}
\newtheorem{Cor}[equation]{Corollary}
\newtheorem{Lem}[equation]{Lemma}
\theoremstyle{definition}
\newtheorem{Def}[equation]{Definition}
\newtheorem{Rmk}[equation]{Remark}
\begin{document}

\title [VOA associated to type $G$ affine Lie algebras]
{Vertex operator algebras \\ associated to type $G$ affine Lie algebras II}
\author[J. D. Axtell]{Jonathan D. Axtell}
\thanks{This work was supported by NRF Grant \# 2010-0010753.}
\address{Department of Mathematics, Seoul National University, 599 Gwanak-ro, Gwanak-gu, Seoul 151-747, Korea}
\email{jdaxtell@snu.ac.kr}

\begin{abstract}
We continue the study of the vertex operator algebra $L(k,0)$ associated to a type $G_2^{(1)}$ affine Lie algebra at admissible one-third integer levels, $k = -2 + m + \tfrac{i}{3}\ (m\in \mathbb{Z}_{\ge 0}, i = 1,2)$, initiated in \cite{AL}.  Our main result is that there is a finite number of irreducible $L(k,0)$-modules from the category $\mathcal{O}$.
The proof relies on the knowledge of an explicit formula for the singular vectors.  After obtaining this formula, we are able to show that there are only finitely many irreducible $A(L(k,0))$-modules form the category $\mathcal{O}$.  The main result then follows from the bijective correspondence in $A(V)$-theory.
\end{abstract}

\maketitle

\section*{Introduction}

In this paper, we continue the study of vertex operator algebras associated to a Lie algebra of type $G_2^{(1)}$ at admissible one-third integer levels, which was undertaken in \cite{AL}.  We briefly recall the formulation.

Vertex operator algebras (VOA) are mathematical counterparts of chiral algebras in conformal field theory.  An important family of examples comes from representations of affine Lie algebras.  More precisely, if we let $\hat{\mathfrak{g}}$ be an affine Lie algebra, the irreducible $\hat{\mathfrak{g}}$-module $L(k,0)$ of highest weight $k \Lambda_0$, $k\in \mathbb{C}$, is a VOA whenever $k \neq - h^{\vee}$, the negative of the dual Coxeter number.

The representation theory of $L(k,0)$ varies depending on the values $k \in \mathbb{C}$.  If $k$ is a positive integer, the VOA $L(k,0)$ has only finitely many irreducible modules which coincide with the integrable highest weight $\hat{\mathfrak{g}}$-modules of level $k$, and the category of $\mathbb{Z}_+$-graded weak $L(k,0)$-modules is semisimple.  For generic $k \in \mathbb{C}$, however, the behavior is much different.  (For example, see \cite{KL1, KL2}.)  Yet for certain values $k\in \mathbb{Q}$ satisfying $k+h^{\vee} >0$, the category of weak $L(k,0)$-modules which belong to the category $\mathcal{O}$ as $\hat{\mathfrak{g}}$-modules has structure similar to the category of $\mathbb{Z}_+$-graded weak modules for positive integer values.  Such values $k \in \mathbb{Q}$ are called {\em admissible levels}.  This notion was defined in the works of Kac and Wakimoto \cite{KW1, KW2}.

Several cases have been studied in varying degrees of generality.  In \cite{A1}, Adamovi{\' c} studied the case of admissible half-integer levels for type $C_l^{(1)}$.  The case of all admissible levels of type $A_1^{(1)}$ has been studied by Adamovi{\' c} and Milas in \cite{AM}, and by Dong, Li and Mason in \cite{DLM}.  More recently  in  \cite{P,P1}, Per{\v s}e studied admissible half-integer levels for type $A_1^{(1)}$ and $B_l^{(1)}$.  In \cite{AL}, the author and Lee studied admissible one-third integer levels for type $G_2^{(1)}$.  

In this progression, a fundamental r{\^ o}le is played by the $A(V)$-theory.  The associative algebra $A(V)$ attached to a vertex operator algebra $V$ was introduced by Frenkel and Zhu in \cite{FZ,Z}, where it was shown that irreducible $\mathbb{Z}_+$-graded weak $V$-modules are in bijective correspondence with irreducible $A(V)$-modules.  In the affine case, it is useful to consider the restriction of this correspondence to the subcategories listed in the following diagram:
 
\begin{center}
\begin{tabular}{| c | c |}
\hline
 $L(k,0)$  & $A(L(k,0))$ \\
\hline
& \\
\{$\mathbb{Z}_+$-graded weak modules\}  &  \{Modules\}  \\ 
\begin{sideways}$\subseteq$\end{sideways} & \begin{sideways}$\subseteq$\end{sideways} \\
\{Weak modules from the category $\mathcal{O}$\}  &  \{Modules from the category $\mathcal{O}$\}\\
\begin{sideways}$\subseteq$\end{sideways} & \begin{sideways}$\subseteq$\end{sideways}\\
\{Modules\}  & \{Finite dimensional modules\}\\
&\\
\hline
\end{tabular} 
\end{center}

\noindent The set of isomorphism classes of irreducible  objects from each category on the left-hand side is in bijective correspondence with the classes of irreducible objects from the respective category on the right-hand side.  If $k$ is a positive integer, then $L(k,0)$ is a {\em rational} VOA.  Hence there is no difference between modules and $\mathbb{Z}_+$-graded weak modules (see \cite{Z}), and the three categories of modules collapse to a single category.

For non-integer admissible levels, however, there is a distinction.  
While there are infinitely many classes of irreducible $\mathbb{Z}_+$-graded weak modules, it was shown for all cases considered in the works mentioned above that there is only a finite number of classes of irreducible weak $L(k,0)$-modules from the category $\mathcal{O}$.  Furthermore, it was conjectured in \cite{AM} that such modules are completely reducible for all admissible levels $k$.  I.e., it was conjectured that the vertex operator algebra $L(k,0)$ is {\em rational in the category $\mathcal{O}$}.  This was also verified in most of the cases considered above.  In particular, it was shown in \cite{AL} that $L(k,0)$ is rational in the category $\mathcal{O}$ for admissible levels, $k= -\frac{5}{3}, -\frac{4}{3}, -\frac{2}{3}$, for type $G_2^{(1)}$.

In this paper, we show there are only finitely many irreducible weak $L(k,0)$-modules from the category $\mathcal{O}$ for all admissible one-third integer levels, $k= -2 + m +\frac{i}{3}$ ($m \in \mathbb{Z}_{\ge 0}, i=1,2$), for type $G_2^{(1)}$.
By a result in \cite{KW1}, $L(k,0)$ is the irreducible quotient of a generalized Verma module with maximal ideal generated by a vector $v_k$.  The method of classification using Zhu's theory requires an explicit formula for this singular vector.  While this formula was verified by direct calculation in \cite{AL}, the number of computations involved increases rapidly as $k$ increases.  For levels $k\ge -\frac{1}{3}$, an alternative approach seems necessary.

Results in \cite{KK, KW1} guarantee the existence of a singular vector $v_k$ belonging to a certain weight space in the generalized Verma module.  As it turns out,
the defining properties of a singular vector together with this weight condition are quite strong.  We will find a formula for $v_k$ by describing the set of vectors which satisfy all such conditions. 
Exploiting an isomorphism between the generalized Verma module and a universal enveloping algebra, we begin in Section 2 by transferring some of the conditions for $v_k$ to this associative algebra.  
In particular, it will follow from Lemma \ref{lem:P3:universal} that $v_k$ is the image of a monomial of certain elements $u,v,w$ in the algebra.  This greatly simplifies our computations, and in Section 3, we complete the description of the singular vectors (see Proposition \ref{prop:sing} and Theorem \ref{thm:sing}).

We then use the singular vectors to obtain information about the representation theory of $A(L(k,0))$.  By considering certain polynomials in the symmetric algebra of the Cartan subalgebra, we prove in Section 4 that there are only finitely many irreducible $A(L(k,0))$-modules from the category $\mathcal{O}$ for each admissible one-third integer level, $k= -2 + m +\frac{i}{3}$.  Our main result, Theorem \ref{thm:main}, which states that there are only finitely many irreducible weak $L(k,0)$-modules from the category $\mathcal{O}$, then follows from the bijective correspondence in $A(V)$-theory.

\subsection*{Acknowledgments} 
The author wishes to thank K.-H. Lee and A. Feingold for helpful conversations and encouragement.

\bigskip

\section{Preliminaries}

In this section, we collect several facts needed in the remainder.  We retain the notation given in the previous paper \cite{AL}.  The reader may refer to the preliminary section of that paper for definitions and further references.

Given an affine Lie algebra $\hat{\frak g }$, recall that $\hat{\frak g}_{\pm} = \frak g \otimes t^{\pm1} \mathbb C[t^{\pm1}]$.  Let $\mathcal{U}(\cdot )$ and $\mathcal{S}( \cdot )$ denote the universal enveloping algebra and symmetric algebra, respectively.  The adjoint action of the finite-dimensional Lie subalgebra $\frak g$ on $\hat{\frak g}$ is locally finite and leaves $\hat{\frak g}_{\pm}$ invariant.  We thus have a corresponding locally finite action of $\frak g$ on $\mathcal{U}(\hat{\frak g})$ (resp. $\mathcal{S}(\hat{\frak g}$)) which restricts to an action on $\mathcal{U}(\hat{\frak g}_{\pm})$ (resp. $\mathcal{S}(\hat{\frak g}_{\pm}$)).   From results in \cite{D}, it may be seen that there is an isomorphism of $\frak g$-modules, $\omega: \mathcal{S}(\hat{\frak g}_-) \cong \mathcal{U}(\hat{\frak g}_-)$, defined by setting
\begin{equation}\label{eq:isom1} \omega(X_1 \cdots X_n) = \frac{1}{n!}\sum_{\sigma \in S_n} X_{\sigma(1)}\cdots X_{\sigma(n)}
\end{equation}
for $X_1, \dots, X_n \in \hat{\frak g}_-$, where $S_n$ is the symmetric group.  Note here that we will generally use upper case letters (e.g. $X$, $Y$, etc.) for elements of a Lie algebra and lower case letters (e.g. $x$, $y$, etc.) for elements of an associative algebra.

Given $\mu \in \frak h^*$ and $k \in \mathbb{C}$, recall that the {\em generalized Verma module} \[ N(k,\mu) = \mathcal{U}(\hat{\frak g})\otimes_{\mathcal{U}(\frak g \oplus \mathbb C K \oplus \hat{\frak g}_+)} V(\mu)\]
 is the $\hat{\frak g}$-module induced from the irreducible $\frak g$-module $V(\mu)$, where $\hat{\frak g}_+$ acts trivially and $K$ acts as scalar multiplication by $k$.  The {\em vacuum} module $N(k,0)$ is a VOA whenever $k \neq -h^{\vee}$.

Since the adjoint action of $\frak g$ leaves the subspace $\frak g \oplus \mathbb C K \oplus \hat{\frak g}_+ \subseteq \hat{\frak g}$ invariant, we have an induced action of $\frak g$ on $N(k,0)$.  Furthermore, there is an isomorphism $\varphi: \mathcal{U}(\hat{\frak g}_-) \cong N(k,0)$ of $\frak g$-modules, given by
\begin{equation}\label{eq:isom2} \varphi(x) = x.\mathbf{1} \quad (x \in \mathcal{U}(\hat{\frak g}_-) ),
\end{equation} 
where $\mathbf{1} \in N(k,0)$ is the vacuum vector.

Algebras $\mathcal{U}(\hat{\frak g}_-)$ and $\mathcal{S}(\hat{\frak g}_-)$ are both naturally graded by the affine root lattice $\widehat{Q}$: \[ \mathcal{U}(\hat{\frak g}_-) = \bigoplus_{\hat{\gamma} \in \widehat{Q} } \mathcal{U}(\hat{\frak g}_-)_{ (\hat{\gamma}) }, \quad \mathcal{S}(\hat{\frak g}_-) = \bigoplus_{\hat{\gamma} \in \widehat{Q} } \mathcal{S}(\hat{\frak g}_-)_{ (\hat{\gamma}) }. \]
On the other hand, $N(k,0)$ is graded by the coset $k\Lambda_0 + \widehat{Q}  \subseteq \hat{\frak h}^*$:
\[ N(k,0) = \bigoplus_{\hat{\gamma} \in \widehat{Q} } N(k,0)_{ (k\Lambda_0 + \hat{\gamma}) }. \]
Additionally,  these gradings are all compatible with the respective $\frak g$-module action.  Let $wt(\cdot)$ denote the weight of an homogeneous element with respect to any of the above gradings.  The isomorphism $\omega: \mathcal{S}(\hat{\frak g}_-) \rightarrow \mathcal{U}(\hat{\frak g}_-)$ preserves the grading: if $x \in \mathcal{S}(\hat{\frak g}_-)$ is homogeneous, then
\begin{equation}\label{eq:grading1}
wt (\omega (x) ) = wt(x).
\end{equation}
While for the isomorphism $\varphi: \mathcal{U}(\hat{\frak g}_-) \rightarrow N(k,0)$, we clearly have: if $y \in \mathcal{U}(\hat{\frak g}_-)$ is homogeneous, then
\begin{equation}\label{eq:grading2}
wt(\varphi(y)) = wt(y) + k \Lambda_0.
\end{equation}

From now on, we assume that $\hat{\mathfrak{g}}$ is a Lie algebra of type $G_2^{(1)}$.  Recall that the root vectors of the finite dimensional subalgebra $\mathfrak g$ are denoted $\{ E_{ij} , F_{ij}\ |\ i\alpha + j \beta \in \Delta_+\}$ and that we write $H_{ij} = [E_{ij}, F_{ij}] = (i \alpha + j \beta)^{\vee}$. 

We will write $\theta^{\vee} = 2\alpha + \beta$ to denote the highest short root in $\Delta$.  Recall from Lemma 2.2 of \cite{AL} that the {\em vacuum} admissible weights at one-third integer levels for type $G_2^{(1)}$ have the form \[ \lambda_{3m+i} = (m-2+\tfrac{i}{3})\Lambda_0\quad (m \in \mathbb{Z}_{\ge 0}, i = 1,2). \]

The following lemma describes essential properties of the singular vector whose formula we seek.
\begin{Lem}\label{lem:exist}
Suppose $m \in \mathbb{Z}_{\ge 0}$ and $i \in \{1,2\}$.  Let $\lambda_{3m+i} = (m-2+\frac{i}{3})\Lambda_0$, $k=m-2+\tfrac{i}{3}$ and $n=3k+7$. Then there exists a singular vector $v_k \in N(k,0)$ such that the maximal $\hat{\frak{g}}$-submodule $J(k,0)$ of $N(k,0)$ is generated by $v_k$.  Furthermore, $v_k$ satisfies the following properties:
\begin{align*}
\phantom{LLLLL}&(P1) & wt(v_k) =&\ k \Lambda_0 +  n\theta^{\vee} - n \delta, &\phantom{LLLLL} &(P2) & E_{01}(0).v_k =&\ 0,&\phantom{LLLLLLLLLL}\\
&(P3) & E_{10}(0).v_k =&\ 0,& &(P4) & F_{32}(1).v_k =&\ 0.&
\end{align*}
\end{Lem}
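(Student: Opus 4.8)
The plan is to obtain the lemma from the structure theory of admissible highest weight modules of Kac and Wakimoto \cite{KW1}, which rests on the Kac--Kazhdan determinant formula \cite{KK}, together with a weight-lattice computation for $\hat{\mathfrak g} = G_2^{(1)}$. The essential content is the generation statement together with the weight formula (P1): properties (P2)--(P4) merely restate that $v_k$, being a singular vector, is a highest weight vector for $\hat{\mathfrak g}$, since $E_{01}(0) = E_{01}$ and $E_{10}(0) = E_{10}$ are the raising operators for the simple roots of $\mathfrak g$ and $F_{32}(1) = F_{32}\otimes t$ is the affine Chevalley generator attached to $\alpha_0 = \delta - \theta$, so that (P2)--(P4) jointly say that every weight-raising Chevalley generator of $\hat{\mathfrak g}$ annihilates $v_k$.

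First I would determine the integral root system of $\lambda := k\Lambda_0$. Since $\lambda_{3m+i}$ is admissible (Lemma 2.2 of \cite{AL}), we have $k + h^\vee = m + 2 + \tfrac i3 = \tfrac{3m+6+i}{3}$, of denominator $3$ in lowest terms. With the invariant form normalized so that the long roots of $\mathfrak g$ have squared length $2$ (as forced by $\alpha_0 = \delta - \theta$), a real root $\hat\alpha = \gamma + N\delta$ of $\hat{\mathfrak g}$ satisfies $\langle \lambda + \hat\rho, \hat\alpha^\vee\rangle = \langle \rho, \gamma^\vee\rangle + \tfrac{2N}{(\gamma,\gamma)}(k + h^\vee)$, and this lies in $\mathbb Z$ for every $\hat\alpha$ with $\gamma$ short but, among those with $\gamma$ long, only when $3 \mid N$. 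Hence the integral root system $\Delta^{[\lambda]}$ is of affine type, with finite part all of $\Delta$ and one further simple root, which may be taken to be $\delta - \theta^\vee$. Now $N(k,0)$ is the quotient of the Verma module $M(k\Lambda_0)$ by the submodules generated by the singular vectors at $s_\alpha\cdot\lambda$ and $s_\beta\cdot\lambda$ (which exist since $\langle \lambda + \hat\rho, \alpha^\vee\rangle = \langle \lambda + \hat\rho, \beta^\vee\rangle = 1$), and $\lambda$ is $\Delta^{[\lambda]}$-dominant and regular, its pairings with the three simple coroots of $\Delta^{[\lambda]}$ being $1$, $1$, and $3k+7 > 0$. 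Applying \cite{KK, KW1} to $\lambda$ therefore gives that $J(k,0) = \mathcal U(\hat{\mathfrak g})v_k$ for a single singular vector $v_k$ of weight $s_{\delta - \theta^\vee}\cdot\lambda$.

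It then remains to read off this weight: $s_{\delta-\theta^\vee}\cdot\lambda = \lambda - n(\delta - \theta^\vee)$ with $n = \langle \lambda + \hat\rho, (\delta - \theta^\vee)^\vee\rangle$; since $\theta^\vee = 2\alpha + \beta$ is short one has $(\delta - \theta^\vee)^\vee = -(\theta^\vee)^\vee + 3K$, so with $\langle\rho, (\theta^\vee)^\vee\rangle = 5$ and $\langle\lambda + \hat\rho, K\rangle = k + h^\vee = k + 4$ one gets $n = -5 + 3(k+4) = 3k + 7$ and $wt(v_k) = k\Lambda_0 + n\theta^\vee - n\delta$, which is (P1). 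The delicate part is exactly this bookkeeping. The normalizations of the form and of coroots must be handled with care: for the non-simply-laced $\mathfrak g = G_2$ at a level of denominator exactly $r^\vee = 3$ it is easy to introduce a spurious factor of $3$ --- for instance writing $3\delta - \theta^\vee$, or the affine simple root $\delta - \theta$ of $G_2^{(1)}$ itself, in place of $\delta - \theta^\vee$ --- which would corrupt $n$ and hence the conformal weight of $v_k$. One must also confirm that $J(k,0)$ is generated by a \emph{single} vector, which rests on $\Delta^{[\lambda]}$ having finite part exactly $\Delta$, so that beyond the two finite simple reflections already divided out in passing from $M(k\Lambda_0)$ to $N(k,0)$ only the reflection $s_{\delta - \theta^\vee}$ remains. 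Both facts are special to the levels $k = -2 + m + \tfrac i3$, and this is where I would focus the verification.
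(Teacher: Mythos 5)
Your proposal is correct and follows essentially the same route as the paper: both invoke the Kac--Wakimoto/Kac--Kazhdan description of the maximal submodule of the Verma module $M(k\Lambda_0)$ as generated by singular vectors attached to the simple coroots $\{\alpha^\vee,\beta^\vee,(\delta-\theta^\vee)^\vee\}$ of the integral root system, observe that the two finite ones die in the quotient $N(k,0)$ while the third survives with weight $\lambda - n(\delta-\theta^\vee)$, $n=3k+7$, and note that $(P2)$--$(P4)$ are just the singular-vector condition on the generators $E_{10}(0), E_{01}(0), F_{32}(1)$ of $\hat{\mathfrak n}_+$. The only difference is that you re-derive the integral simple roots and the pairing $n=3k+7$ directly, where the paper cites Lemma 2.2 and Proposition 1.3 of \cite{AL}.
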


\begin{proof}
From Lemma 2.2 of \cite{AL}, we have: \[ \widehat{\Pi}_{ \lambda_{3n+i} }^{\vee} = \{ (\delta - \theta^{\vee})^{\vee}, \alpha^{\vee}, \beta^{\vee} \}. \]
It is implicit in Proposition 1.3 of \cite{AL} that the maximal submodule of the Verma module $M(\lambda_{3m+i})$ is generated by three nonzero singular vectors with weights
\[r_{\delta-\theta^{\vee}}\cdot \lambda_{3m+i},\quad r_{\alpha}\cdot \lambda_{3m+i},\quad r_{\beta}\cdot \lambda_{3m+i},\]
respectively. For the first of these weights, we have by Lemma 2.2:
\begin{align*}
r_{\delta-\theta^{\vee}} \cdot \lambda_{3m+i}\ & =  \ \lambda_{3m+i} - \langle \lambda_{3m+i} + \rho, (\delta - \theta^{\vee})^{\vee}\rangle (\delta - \theta^{\vee}) \\  
 & =  \  \lambda_{3m+i} - (3m+i+1)(\delta - \theta^{\vee}) \\
 & =  \  \lambda_{3m+i} - n(\delta - \theta^{\vee}). 
\end{align*}
While for the other weights, we have:
\[ r_{\alpha} \cdot \lambda_{3m+i}\ =\ \lambda_{3m+i} - \langle \lambda_{3m+i}+\rho, \alpha^{\vee} \rangle \alpha\ =\ \lambda_{3m+i} - \alpha,\quad \mbox{and}\]
\[ r_{\beta} \cdot \lambda_{3m+i}\ =\ \lambda_{3m+i} - \langle \lambda_{3m+i}+\rho, \beta^{\vee} \rangle \beta\ =\ \lambda_{3m+i} - \beta,\] respectively.  It is thus clear from the definition of $N(k,0)$ that the singular vectors in $M(\lambda_{3m+i})$ corresponding to the last two weights both map to zero under the natural projection onto $N(k,0)$, while the singular vector in $M(\lambda_{3m+i})$ corresponding to the first weight must map to a nonzero singular vector, $v_{k}$, of weight $k\Lambda_0 + n\theta^{\vee} - n\delta$, which is property $(P1)$.  Since $\hat{\mathfrak{n}}_+$ is generated by the elements $E_{10}(0), E_{01}(0)$, and $F_{32}(1)$, the vector $v_{k}$ also satisfies properties $(P2)-(P4)$ by the definition of singular vector.
\end{proof}

In Sections 2 and 3, we will describe the subspace of elements in $N(k,0)$ which satisfy properties $(P1) - (P4)$ in the above lemma.  For the first three properties $(P1)-(P3)$ it will be convenient to work with the associative algebras $\mathcal{U}(\hat{\frak g }_-)$ and $\mathcal{S}(\hat{\frak g }_-)$.

The property $(P1)$ says that $v_k \in N(k,0)_{ (k\Lambda_0 + n \theta^{\vee} - n \delta) }$, where $n = 3k+7$.  It follows from (\ref{eq:grading2}) that we must have $v_k = x.1$, for some $x \in \mathcal{U} ( n \theta^{\vee} - n \delta )$.  We may then consider the subalgebra $\mathcal{U}_{\theta^{\vee}-\delta} \subseteq \mathcal{U}(\hat{\frak g}_-)$ defined by  \[ \mathcal{U}_{\theta^{\vee} - \delta} = \bigoplus_{n \in \mathbb{Z}_{\ge 0} } \mathcal{U}(\hat{\frak g}_-)_{ (n \theta^{\vee} - n \delta) }.\]
It will also be convenient to consider the corresponding commutative analogue, $\mathcal{S}_{\theta^{\vee}-\delta} \subseteq \mathcal{S}(\hat{\frak g}_-)$, defined by \[\mathcal{S}_{\theta^{\vee}-\delta} = \bigoplus_{n \in \mathbb{Z}_{\ge 0} } \mathcal{S}(\hat{\frak g}_-)_{ (n \theta^{\vee} - n \delta) }.\] It is not difficult to see that $\omega( \mathcal{S}_{\theta^{\vee}-\delta} ) = \mathcal{U}_{\theta^{\vee}-\delta} $.  In Section 2, we will show that $\mathcal{S}_{\theta^{\vee}-\delta}$ is a polynomial algebra in infinitely many variables, and we also describe the subalgebras of $\mathcal{S}_{\theta^{\vee}-\delta}$ and $\mathcal{U}_{\theta^{\vee}-\delta}$ satisfying properties corresponding to $(P2)$ and $(P3)$.  In Section 3, we use this information, along with the map $\varphi: \mathcal{U}(\hat{\mathfrak g}_-) \rightarrow N(k,0)$, to give a description of the subspace of $N(k,0)$ satisfying all four properties $(P1)-(P4)$, yielding the desired formula for the singular vector.

\bigskip

\section{Subalgebras related to singular vectors}

In this section we first describe each subalgebra of $\mathcal{S}(\hat{\frak g }_-)$ in the sequence
\[
(\mathcal{S}_{\theta^{\vee}-\delta})^{\frak n_+}\ \subseteq \quad (\mathcal{S}_{\theta^{\vee}-\delta})^{\beta}\ \subseteq \quad \mathcal{S}_{\theta^{\vee}-\delta}\ \subseteq\quad \mathcal{S}(\hat{\frak g }_-),
\]
corresponding to the properties $(P1)$, $(P2)$, and $(P3)$  of Lemma \ref{lem:exist}, respectively.  We will then use these descriptions together with the fact that $(\mathcal{S}_{\theta^{\vee}-\delta})^{\frak n_+} \cong (\mathcal{U}_{\theta^{\vee}-\delta})^{\frak n_+}$ to describe the subalgebra $(\mathcal{U}_{\theta^{\vee}-\delta})^{\frak n_+} \subseteq \mathcal{U}(\hat{\frak g }_-)$.

We fix the following ordered basis of $\mathfrak{g}$:
\begin{equation}\label{eq:basis} 
(E_{32}, E_{31}, E_{21}, E_{11}, E_{10}, E_{01},  H_{01}, F_{01}, H_{21}, F_{10}, F_{11}, F_{21}, F_{31}, F_{32}).
\end{equation}
We have used $H_{21} = (2\alpha + \beta)^{\vee}$  $( = 2H_{10} + 3H_{01})$ in the above list so that our basis can be partitioned into subbases for invariant subspaces of the $\mathfrak{sl}_2$-triple \[ \mathfrak{g}_{\beta}\ =\ \mathbb{C} E_{01} \oplus \mathbb{C} H_{01} \oplus \mathbb{C} F_{01}. \]
Let us fix a basis for $\hat{\mathfrak{g}}_-$ which consists of the set of all $X(-n)$ such that $X$ is a basis element from (\ref{eq:basis}) and $n> 0$.  This basis is ordered so that $X(-n) \le X'(-n)$ if $X \le X'$, and  $X(-m) < X'(-n)$ whenever $m<n$.  Let $\mathcal{B}_{PBW}$ denote the corresponding PBW basis of $\mathcal{U}(\hat{\mathfrak g}_-)$, which may also be considered a basis for $\mathcal{S}(\hat{\frak g}_-)$.

The following subsets form a partition of the $\mathfrak{g}$-basis (\ref{eq:basis}):
\begin{align*}
& B^1 = \{E_{32}, E_{31} \}, & \quad & B^2 = \{E_{21} \}, & \quad & B^3 = \{E_{11},E_{10} \}, & \quad & B^4 = \{E_{01}, H_{01}, F_{01} \}, \\
& B^5 = \{H_{21} \}, & \quad & B^6 = \{F_{10}, F_{11} \}, & \quad & B^7 = \{F_{21} \}, & \quad & B^8 = \{F_{31}, F_{32} \}.
\end{align*}
We may similarly decompose the basis for $\hat{\mathfrak{g}}_-$ as the union of the following subbases: 
\begin{align*}
& B_{-j}^1 = \{E_{32}(-j), E_{31}(-j) \}, & & B_{-j}^2 = \{E_{21}(-j) \}, &  & B_{-j}^3 = \{E_{11}(-j),E_{10}(-j) \},\\
 &  B_{-j}^4 = \{E_{01}(-j), H_{01}(-j), F_{01}(-j) \}, &  & B_{-j}^5 = \{H_{21}(-j) \}, & & B_{-j}^6 = \{F_{10}(-j), F_{11}(-j) \},\\
  & B_{-j}^7 = \{F_{21}(-j) \}, & & B_{-j}^8 = \{F_{31}(-j), F_{32}(-j) \}& & \ ( j \in \mathbb{Z}_{\ge 1}). 
\end{align*}
Together these collections give decompositions,
\[ 
\frak g\ = \ \bigoplus_{ i \in \{1, \dots, 8\} } \mbox{Span}_{\mathbb C}( B^{i}) \ \mbox{ and } \quad
\hat{\frak g}_-\ = \ \bigoplus_{ \substack{i \in \{1, \dots, 8\} \\ j \in \mathbb{Z}_{\ge 1} } } \mbox{Span}_{\mathbb C} (B^{i}_{-j}),
\]
of $\frak g$ and $\hat{\frak g}_-$, respectively, into irreducible $\mathfrak{g}_{\beta}$-submodules.

In the case of the symmetric algebra, the PBW basis  may be partitioned into subbases of {\em invariant} subspaces for the action of $\mathfrak{g}_{\beta}$ on $\mathcal{S}(\hat{\frak g}_-)$.
\begin{Def}
A {\em $\beta$-string} is a set $B \subseteq \mathcal{S}(\hat{\frak g}_-)$ of the form
\[ B = B^{i_1}_{-j_1} B^{i_2}_{-j_2} \cdots B^{i_t}_{-j_t} = \{X_1 X_2 \cdots X_t\ |\ X_l \in B^{i_l}_{-j_l} \mbox{ for } l=1,\dots, t \}, \] where $i_1, \dots, i_t \in \{1,\dots,8 \}$ and $j_1, \dots, j_t \in \mathbb{Z}_{\ge 1}$.  A $\beta$-string with only one factor is called a {\em simple $\beta$-string}, and the set $\{1\}$ containing the identity is a trivial $\beta$-string.
\end{Def}

The collection of all $\beta$-strings, $B$, forms a partition of $\mathcal{B}_{PBW}$, and we have a decomposition of $\mathcal{S}(\hat{\mathfrak g}_-)$ into $\mathfrak{g}_{\beta}$-invariant subspaces:  $\mathcal{S}(\hat{\mathfrak g}_-) = \oplus_{B}\mbox{ Span}_{\mathbb{C}}(B)$.  Now recall the subalgebra $\mathcal{S}_{\theta^{\vee}-\delta}$ introduced in Section 1.  It is not difficult to see that 
\[
\mathcal{S}_{\theta^{\vee}-\delta} \cap \mathcal{B}_{PBW} = \{ y \in \mathcal{B}_{PBW}\ |\ wt(y) = n\theta^{\vee}-n\delta, \mbox{ for some }n \in \mathbb{Z}_{\ge 0}  \}
\]
 forms a basis of $\mathcal{S}_{\theta^{\vee}-\delta}$.  This basis may similarly be partitioned into subsets $\mathcal{S}_{\theta^{\vee}-\delta} \cap B$.  In the next lemma, we will explicitly describe each set $\mathcal{S}_{\theta^{\vee}-\delta} \cap B$ corresponding to an arbitrary $\beta$-string $B$.  In order to give this description, it will be convenient to factor out the largest power of $B^1_{-1}$ from a $\beta$-string: we say $B=B^{i_1}_{-j_1} \cdots B^{i_t}_{-j_t}$ {\em has no $B^1_{-1}$-factors} if  $B^{i_l}_{-j_l} \neq B^1_{-1}$ for $l=1,\dots, t$.

Given a $\beta$-string, $B=B^{i_1}_{-j_1} \cdots B^{i_t}_{-j_t}$, with no $B^1_{-1}$-factors and an element $y=y_1\cdots y_t \in B$, with $y_l \in B^{i_l}_{-j_l}\ ( l=1,\dots,t)$, we write
\begin{equation}\label{eq:def}
m_i(y) = m_i(y_1) + \cdots + m_i(y_t)\ (i=1,2) \quad \mbox{ and } \quad n(y) = n(y_1)+ \cdots + n(y_t),
\end{equation}
where the integers $m_i(y_1), \dots, m_i(y_t)$ and $n(y_1), \dots, n(y_t)$ are given in Table \ref{table:one}.  We also write $m(B) = m_1(y)+m_2(y)$, which is well-defined since, from (\ref{eq:def}) and Table \ref{table:one}, $m_1(y)+m_2(y)=m_1(y')+m_2(y')$ for $y,y' \in B$.

 \begin{table}
 \begin{tabular}{| c || c | c | c | c |}
\hline
\bf $ \beta$-string & $\bf y$ &  $\bf m_1$ & $\bf m_2$ & $\bf n$\\
\hline
 \hline
 $B^1_{-j-1} $& $E_{32}(- j-1)$ & $j$& $j+1$ & $3j $  \\
  				&$E_{31}(-j-1)$ & $j+1$ & $j$  & "  \\
\hline
 $B^2_{-j} $&$E_{21}(-j)$ & $j$ & $j$ & $3j-2 $  \\
& & & &\\
\hline
 $B^3_{-j}$& $E_{11}(-j)$ &  $j$  & $j+1$ & $3j-1 $   \\
 				&  $E_{10}(-j)$ & $j+1$ & $j$   & "   \\
\hline
 $B^4_{-j}$&  $E_{01}(-j)$& $j$  & $j+2$ & $3j $   \\
				&  $H_{01}(-j)$ & $j+1$ & $j+1$   & "    \\
				&  $F_{01}(-j)$ & $j+2$ & $j$   &  " \\
\hline
\end{tabular}
\ 
 \begin{tabular}{| c || c | c | c | c |}
\hline
\bf $ \beta$-string & $\bf y$ &  $\bf m_1$ & $\bf m_2$ & $\bf n$\\
\hline
\hline
 $B^5_{-j}  $ & $H_{21}(-j)$ & $j+1$ & $j+1$ & $3j $  \\
& & & &\\
\hline
 $B^6_{-j}	$	& $F_{10}(-j)$ & $j+1$ & $j+2$  & $3j+1$  \\
			 & $F_{11}(-j)$ & $j+2$ & $j+1$ &   " \\
\hline
 $B^7_{-j} $	&  $F_{21}(-j)$ &  $j+2$  & $j+2$ & $3j+2 $   \\
& & & &\\
\hline			
$B^8_{-j}	$	&  $F_{31}(-j)$ & $j+2$ & $j+3$   & $3j+3$  \\
			&  $F_{32}(-j)$ & $j+3$ & $j+2$   &  " \\
& & & &\\			
\hline

\end{tabular}
 \medskip
 \caption{Values of $m_1,m_2, n$ for simple $\beta$-strings $B^{i}_{-j} \neq B^{1}_{-1}\ (j \in \mathbb{Z}_{\ge 1})$.}
 \label{table:one}
\end{table}

\begin{Lem}\label{lem:P1}
Suppose $B=B^{i_1}_{-j_1} \cdots B^{i_t}_{-j_t}$ is a $\beta$-string with no $B^1_{-1}$-factors, and let $m\in \mathbb{Z}_{\ge 0}$.  Then
\begin{equation*}
\mathcal{S}_{\theta^{\vee}-\delta} \cap \left((B^1_{-1})^m B\right) =
\begin{cases} \{ E_{32}(-1)^{m_1(y)} E_{31}(-1)^{m_2(y)} y\ |\ y \in B \} & \text{if $m=m(B)$}
\\
\quad \emptyset &\text{otherwise.}
\end{cases}
\end{equation*}
Furthermore, $\mathcal{S}_{\theta^{\vee}-\delta}$ is a polynomial algebra with an infinite number of generators.
\end{Lem}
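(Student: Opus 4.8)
The plan is to prove both assertions by a direct computation with weights. For the first assertion I would fix a $\beta$-string $B = B^{i_1}_{-j_1}\cdots B^{i_t}_{-j_t}$ with no $B^1_{-1}$-factors and an integer $m\ge 0$, and observe that every element of $(B^1_{-1})^m B$ is a monomial $E_{32}(-1)^a E_{31}(-1)^b\,y$ with $a,b\in\mathbb{Z}_{\ge 0}$, $a+b=m$, and $y=y_1\cdots y_t\in B$. Since $\mathcal{S}_{\theta^{\vee}-\delta}$ is spanned by the PBW monomials of weight $n\theta^{\vee}-n\delta$ for $n\ge 0$, such a monomial lies in $\mathcal{S}_{\theta^{\vee}-\delta}$ precisely when
\[ a\,wt(E_{32}(-1)) + b\,wt(E_{31}(-1)) + wt(y) \in \mathbb{Z}_{\ge 0}\,(\theta^{\vee}-\delta). \]
So the data one needs is just the weights of $E_{32}(-1)$, $E_{31}(-1)$ and of all the basis vectors $X(-j)$ of $\hat{\mathfrak{g}}_-$, which is exactly what is recorded in Table~\ref{table:one}.

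The key observation is that $\{\,wt(E_{32}(-1)),\,wt(E_{31}(-1)),\,\theta^{\vee}-\delta\,\}$ is a $\mathbb{Q}$-basis of $\widehat{Q}\otimes_{\mathbb{Z}}\mathbb{Q}$ (the three $\mathfrak{g}$-parts $3\alpha+2\beta,\ 3\alpha+\beta,\ 2\alpha+\beta$ have pairwise differences spanning $\mathbb{Q}\alpha+\mathbb{Q}\beta$, and all three carry the independent $\delta$-direction; equivalently a $3\times 3$ determinant is nonzero). Hence, for each fixed $y$, the displayed condition has at most one solution $(a,b)$ over $\mathbb{Q}$, and one computes — this being the content of Table~\ref{table:one} for simple $\beta$-strings, extended to general $y$ by the additivity (\ref{eq:def}) — that this solution is $a=m_1(y)$, $b=m_2(y)$ with resulting coefficient $n=n(y)$. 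Because every entry of Table~\ref{table:one} is positive, $m_1(y),m_2(y)\ge 0$, so this solution is a genuine element of $(B^1_{-1})^m B$ exactly when $m_1(y)+m_2(y)=m$; and reading off Table~\ref{table:one} one sees that within each simple block $B^i_{-j}$ the functions $m_1$ and $m_2$ merely trade places — a reflection of the fact that $B^i_{-j}$ is a $\mathfrak{g}_{\beta}$-string, so passing between its elements changes the weight only by $\pm\beta$ — whence $m_1(y)+m_2(y)$ is constant on $B$, equal to $m(B)$. This yields the stated description, the empty case being precisely $m\ne m(B)$.

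For the second assertion, note that every $\beta$-string is uniquely of the form $(B^1_{-1})^m B$ with $B$ having no $B^1_{-1}$-factors, so taking the union of the first part over all such $B$ produces an explicit basis of $\mathcal{S}_{\theta^{\vee}-\delta}$, indexed by the PBW monomials $y$ in the basis vectors of $\hat{\mathfrak{g}}_-$ other than $E_{32}(-1)$ and $E_{31}(-1)$, via $y\longmapsto\widehat{y}:=E_{32}(-1)^{m_1(y)}E_{31}(-1)^{m_2(y)}\,y$. Since $m_1$ and $m_2$ are additive over products, $\widehat{y\,y'}=\widehat{y}\,\widehat{y'}$ in the commutative algebra $\mathcal{S}(\hat{\mathfrak{g}}_-)$; hence $y\mapsto\widehat{y}$ carries the free commutative monoid on $\{\,X(-j) : X(-j)\ne E_{32}(-1),E_{31}(-1)\,\}$ isomorphically onto a linear basis of $\mathcal{S}_{\theta^{\vee}-\delta}$, so that $\mathcal{S}_{\theta^{\vee}-\delta}$ is the polynomial algebra on the (infinitely many) generators $\widehat{X(-j)}$.

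\textbf{Main obstacle.} The computational heart is the verification of Table~\ref{table:one} itself: solving the weight equation above for each of the eight block types $B^1,\dots,B^8$ and each $j\ge 1$. This is routine but lengthy. The conceptual point that makes the whole argument work is the non-degeneracy in the second paragraph: because those three weights form a basis, property $(P1)$ together with the position of a monomial inside a fixed $\beta$-string leaves essentially no freedom. I expect the genuinely delicate step to be the well-definedness of $m(B)$, i.e. the constancy of $m_1+m_2$ on each block, together with the bookkeeping confirming $m_1(y),m_2(y)\ge 0$ so that the asserted set really lies inside $(B^1_{-1})^m B$.
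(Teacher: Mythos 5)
Your proposal is correct and follows essentially the same route as the paper: the membership condition is reduced to a nondegenerate $3\times 3$ linear condition in $(m_1,m_2,n)$ (your linear-independence-of-weights observation is exactly the paper's comparison of $\alpha,\beta,\delta$-coefficients), Table~\ref{table:one} plus the additivity (\ref{eq:def}) identifies the unique solution and the constancy of $m_1+m_2$ on $B$, and the polynomial-algebra statement uses the same generating set $E_{32}(-1)^{m_1(y)}E_{31}(-1)^{m_2(y)}y$ with $y$ ranging over simple $\beta$-strings other than $B^1_{-1}$. Your multiplicativity argument for the second part merely spells out what the paper asserts in one line, so there is nothing to flag.
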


\begin{proof}
Let $y \in B$ and suppose $wt(y) = c_1 \alpha + c_2 \beta + c_3 \delta$ for some $c_i \in \mathbb{Z}\ (i=1,2,3)$.  We wish to determine for which integers, $m_1, m_2, n \in \mathbb{Z}$, we have
\[wt(E_{32}(-1)^{m_1}E_{31}(-1)^{m_2} \cdot y)\ =\ n \theta^{\vee} - n \delta, \]
which is true if and only if
\begin{equation}\label{eq:soln1} m_1 (3\alpha+2\beta-\delta)\ +\ m_2(3\alpha+\beta-\delta)\ +\ c_1\alpha + c_2\beta +c_3\delta\ =\ n(2\alpha+\beta-\delta).
\end{equation}
By comparing the coefficients of $\alpha, \beta$, and $\delta$, respectively, on both sides of (\ref{eq:soln1}), we may obtain the system of equations:
\begin{align}
 3m_1 + 3m_2 - 2n\ =&\ \ -c_1\label{eq:system}\\
 2m_1 +  \phantom{3}m_2 - \phantom{2}n\ =&\ \ -c_2\nonumber \\
 -m_1 -  \phantom{3}m_2 + \phantom{2}n\ =&\ \ -c_3\nonumber
\end{align}
This may be regarded as a system of three equations in three unknowns corresponding to $m_1, m_2, n$.  Thus the system has at most one solution.  

Using Table \ref{table:one} and the formulas (\ref{eq:def}), we see that setting $m_i=m_i(y)$, for $i=1,2$, and $n=n(y)$ yields a solution to the equation such that $m_1+m_2 = m(B)$. We have thus arrive at the unique solution to the system, proving the first statement of the lemma.
To verify the second statement of the lemma, notice that the set
\[ \{ E_{32}(-1)^{m_1(y)}E_{31}(-1)^{m_2(y)} y\ |\ y \mbox{ belongs to a simple } \beta\mbox{-string } B \neq B^1_{-1}\}\quad \subseteq \quad \mathcal{B}_{PBW}
\]
forms an algebraically independent set of generators for $\mathcal{S}_{\theta^{\vee} - \delta}$. 
\end{proof}

Let $B=  B^{i_1}_{-j_1} \cdots B^{i_t}_{-j_t}$ be an arbitrary $\beta$-string.  Recall that Span$_{\mathbb{C}} B$ is a $\frak g_{\beta}$-invariant space.  We may then define constants $c(y,y') \in \mathbb{C}$, for $y,y' \in B$, by the equation
\[
E_{01}.y = \sum_{y' \in B} c(y,y')\ y'.
\]
These coefficients define a partial order on $B$.  First define a covering relation, $\vdash$, by $y' \vdash y$  if $c(y,y') \neq 0$.  Then define a partial order, $\preceq$, by setting
$y \preceq y'$ if there exist elements $y_1, \dots, y_t \in B$ such that $y' = y_t \vdash y_{t-1} \vdash  \dots \vdash y_1 = y$.

\begin{Rmk} \label{rmk:minimum}  
Let $B\neq B^{1}_{-1}$ be a simple $\beta$-string.  Then $B$ has a unique minimal element, $y_- = y_-(B)$, with respect to the partial order $\preceq$.  It is clear from Table \ref{table:one} that $m_2(y')=m_2(y)+1$ whenever $y' \vdash y$ ($y,y' \in B)$, and
$y_-$ may be characterized as the unique element of $B$ for which the value of $m_2(\cdot)$ is minimized.  It is also easy to check from Table \ref{table:one} that $m_2(y_-(B)) = 0$ if  $B \in \{ B^2_{-1}, B^3_{-1}, B^4_{-1}, B^1_{-2}\}$, and $m_2(y_-(B))>0$ for all other simple $\beta$-strings $B \neq B^1_{-1}$.

In general, suppose $B=B^{i_1}_{-j_1} \dots B^{i_t}_{-j_t}$ is a $\beta$-string with no $B^1_{-1}$-factors.  If we write $y^l_-=y_-(B^{i_l}_{-j_l})\ (l=1, \dots, t)$, then it follows from the definition of the adjoint action of $\mathfrak{g}$ on $\mathcal{S}(\hat{\mathfrak g}_-)$ that
\[
y_-\ =\ y_-(B)\ =\ y^1_- y^2_- \cdots y^t_-
\]
is the unique minimal element of $B$.  It follows from (\ref{eq:def}) and the previous paragraph that we also have 
\begin{equation}\label{eq:cover}
m_1(y') = m_1(y)-1 \quad \mbox{and}\quad m_2(y')=m_2(y)+1,\mbox{ whenever } y' \vdash y\ (y,y' \in B),
\end{equation}
 and $y_-$ may be characterized as the unique element of $B$ for which $m_2(\cdot)$ is minimized.  Additionally, it may also be checked that $m_2(y_-(B)) = 0$  if and only if $B= (B^2_{-1})^p (B^3_{-1})^q (B^4_{-1})^r (B^1_{-2})^s$ for some $(p,q,r,s) \in (\mathbb{Z}_{\ge 0})^4$.
\end{Rmk}

In the remainder, we fix the following elements of $\mathcal{S}(\hat{\mathfrak g}_-)$.  Set 
\begin{align*}
& a = E_{21}(-1), \qquad b =  E_{31}(-1) E_{11}(-1)\   -\ E_{32}(-1) E_{10}(-1), \\
& c = E_{31}(-1)^2 E_{01}(-1)  - E_{32}(-1) E_{31}(-1) H_{01}(-1)    - E_{32}(-1)^2 F_{01}(-1),  \mbox{ and}\\
& w = E_{31}(-1) E_{32}(-2)\ -\ E_{32}(-1) E_{31}(-2). 
\end{align*}
Also, let 
  \[ u =\  \tfrac 1 3  a^2 - b, \quad \mbox{and} \quad  v =\  \tfrac 2 9 a^3 -ab -3c.  \] 
Using the same formulas, we may also define elements of $\mathcal{U}(\hat{\mathfrak g}_-)$, which are again denoted $a,b,c,u,v$ and $w$.  The context should make it clear to which algebra the symbols belong.

Now write $B_{p,q,r,s} =  (B^2_{-1})^p (B^3_{-1})^q (B^4_{-1})^r (B^1_{-2})^s$ for $p,q,r,s \in \mathbb{Z}_{\ge 0}$.  It follows from (\ref{eq:def}) and the proof of  Lemma \ref{lem:P1} that $m(B_{p,q,r,s})=q+2r+s$, and one may check that
\begin{equation}\label{eq:gens}
a^p b^q c^r w^s \in \mbox{ Span}_{\mathbb C}\{(B^1_{-1})^{q+2r+s} B_{p,q,r,s}\} 
\end{equation}
for $p,q,r,s \in \mathbb{Z}_{\ge 0}$.

\begin{Lem}\label{lem:P2}
Let $(\mathcal{S}_{\theta^{\vee}-\delta})^{\beta}$ denote the subalgebra of $\mathcal{S}_{\theta^{\vee}-\delta}$ consisting of elements invariant under the adjoint action of $E_{01}$.  Suppose $B$ is a $\beta$-string with no $B^1_{-1}$-factors.  Then
\begin{equation*}
(\mathcal{S}_{\theta^{\vee}-\delta})^{\beta} \cap \mbox{ Span}_{\mathbb{C} } \left((B^1_{-1})^{m(B)} B\right) =
\begin{cases} \mathbb{C} a^p b^q c^r w^s & \text{if $B=B_{p,q,r,s}$, for some $(p,q,r,s) \in (\mathbb{Z}_{\ge 0})^4$}
\\
\quad 0 &\text{otherwise.}
\end{cases}
\end{equation*}
Furthermore, $(\mathcal{S}_{\theta^{\vee}-\delta})^{\beta}$ is a polynomial algebra with generators $a,b,c,w$.
\end{Lem}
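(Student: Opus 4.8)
The plan is to split the computation $\beta$-string by $\beta$-string and, for each $\beta$-string, reduce it to a small triangular linear system that can be solved by induction along the value of $m_2(\cdot)$; the existence of the prescribed invariant is then checked by hand.

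\emph{Reduction to the graded pieces.} By Lemma~\ref{lem:P1}, the only nonempty intersections of $\mathcal{S}_{\theta^{\vee}-\delta}\cap\mathcal{B}_{PBW}$ with a $\beta$-string are those with $(B^1_{-1})^{m(B)}B$, where $B$ ranges over $\beta$-strings with no $B^1_{-1}$-factors, and this intersection equals $\{\tilde y:=E_{32}(-1)^{m_1(y)}E_{31}(-1)^{m_2(y)}y\mid y\in B\}$. Setting $V_B=\mathrm{Span}_{\mathbb{C}}\{\tilde y\mid y\in B\}$ gives $\mathcal{S}_{\theta^{\vee}-\delta}=\bigoplus_{B}V_B$. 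Since $m_1(y)+m_2(y)=m(B)$ is constant on $B$, we have $V_B\subseteq\mathrm{Span}_{\mathbb{C}}\big((B^1_{-1})^{m(B)}B\big)$, the span of a single $\beta$-string and hence a $\mathfrak{g}_{\beta}$-invariant summand of $\mathcal{S}(\hat{\mathfrak g}_-)=\bigoplus_{B'}\mathrm{Span}_{\mathbb{C}}(B')$. Intersecting this $\mathfrak{g}_{\beta}$-decomposition with the kernel of the action of $E_{01}$ and then with $\mathcal{S}_{\theta^{\vee}-\delta}$ yields
\[
(\mathcal{S}_{\theta^{\vee}-\delta})^{\beta}=\bigoplus_{B}\big(V_B\cap\ker(\mathrm{ad}\,E_{01})\big),
\]
so it suffices to compute each summand; the first assertion of the lemma is precisely that it is $\mathbb{C}\,a^pb^qc^rw^s$ when $B=B_{p,q,r,s}$ and $0$ otherwise.

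\emph{The linear system.} Because $\mathrm{ad}\,E_{01}$ is a derivation of $\mathcal{S}(\hat{\mathfrak g}_-)$ with $E_{01}.E_{32}(-1)=0$ and $E_{01}.E_{31}(-1)=\kappa\,E_{32}(-1)$ for some nonzero scalar $\kappa$ (the pair $\{E_{32}(-1),E_{31}(-1)\}$ spans the standard $\mathfrak{g}_{\beta}$-module, with $E_{31}(-1)$ its lowest weight vector), one computes
\[
E_{01}.\tilde y\ =\ m_2(y)\kappa\,E_{32}(-1)^{m_1(y)+1}E_{31}(-1)^{m_2(y)-1}y\ +\ E_{32}(-1)^{m_1(y)}E_{31}(-1)^{m_2(y)}\,(E_{01}.y).
\]
By (\ref{eq:cover}), $y'\vdash y$ forces $m_1(y')=m_1(y)-1$ and $m_2(y')=m_2(y)+1$, so, writing $E_{01}.y=\sum_{y'}c(y,y')y'$, every monomial occurring in $E_{01}.\tilde y$ is a multiple of $E_{32}(-1)^{m_1(z)+1}E_{31}(-1)^{m_2(z)-1}z$ for some $z\in B$ with $m_2(z)\geq1$ (equivalently, $E_{01}.\tilde y$ lands in the $\mathfrak{g}_{\beta}$-weight-$2$ subspace of $\mathrm{Span}_{\mathbb{C}}\big((B^1_{-1})^{m(B)}B\big)$, which those monomials span). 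Collecting coefficients, $x=\sum_{y}\lambda_y\tilde y$ satisfies $E_{01}.x=0$ if and only if
\[
m_2(z)\kappa\,\lambda_z\ +\ \sum_{y\,:\,z\vdash y}c(y,z)\,\lambda_y\ =\ 0\qquad(z\in B).
\]

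\emph{Solving the system, and exhibiting the invariant.} Since $z\vdash y$ forces $m_2(y)=m_2(z)-1$, this system is triangular for the ordering of $B$ by $m_2(\cdot)$. If $m_2(y_-(B))>0$, then at the minimal level the sum is empty and $m_2(z)\kappa\neq0$ forces $\lambda_z=0$; inducting upward on $m_2(\cdot)$ gives $\lambda_z=0$ for all $z$, so $V_B\cap\ker(\mathrm{ad}\,E_{01})=0$, which by Remark~\ref{rmk:minimum} is exactly the case $B\neq B_{p,q,r,s}$. If $m_2(y_-(B))=0$, i.e. $B=B_{p,q,r,s}$, then $y_-(B)$ is the unique element of $B$ with $m_2=0$, the parameter $\lambda_{y_-(B)}$ is free, and every other $\lambda_z$ is determined recursively from the $\lambda_y$ of smaller $m_2$-value, so $\dim\big(V_{B_{p,q,r,s}}\cap\ker(\mathrm{ad}\,E_{01})\big)\leq1$. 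For equality, a direct check with the structure constants of $\mathfrak{g}$ gives $E_{01}.a=E_{01}.b=E_{01}.c=E_{01}.w=0$ (for $w$ immediate, for $a,b,c$ using the chosen normalization of the Chevalley generators), so $a^pb^qc^rw^s$ is $\mathrm{ad}\,E_{01}$-invariant; it lies in $V_{B_{p,q,r,s}}$ by (\ref{eq:gens}) together with the weight count $wt(a^pb^qc^rw^s)=(p+2q+3r+3s)(\theta^{\vee}-\delta)$, and it is nonzero by a leading-term computation in $\mathcal{B}_{PBW}$. This proves the displayed formula; summing over $\beta$-strings, $(\mathcal{S}_{\theta^{\vee}-\delta})^{\beta}=\bigoplus_{(p,q,r,s)}\mathbb{C}\,a^pb^qc^rw^s$, and since these monomials are linearly independent, $a,b,c,w$ are algebraically independent and $(\mathcal{S}_{\theta^{\vee}-\delta})^{\beta}=\mathbb{C}[a,b,c,w]$. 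I expect the main obstacle to be the middle step: tracking the derivation $\mathrm{ad}\,E_{01}$ on the normalized monomials $\tilde y$ and confirming that the resulting $|B|\times|B|$ system is genuinely triangular with respect to $m_2(\cdot)$, so that the base case of the induction splits exactly along the dichotomy $m_2(y_-(B))=0$ versus $>0$ provided by Remark~\ref{rmk:minimum}.
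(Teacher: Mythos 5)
Your proposal is correct and follows essentially the same route as the paper: decompose along $\beta$-strings via Lemma \ref{lem:P1}, write the $E_{01}$-invariance of $\sum_y \lambda_y E_{32}(-1)^{m_1(y)}E_{31}(-1)^{m_2(y)}y$ as a linear system indexed by the covering relation, use Remark \ref{rmk:minimum} to get the dichotomy $m_2(y_-)>0$ (forcing all coefficients to vanish) versus $m_2(y_-)=0$ (one free parameter, so dimension at most one), and then exhibit $a,b,c,w$ as invariants to realize the one-dimensional cases and deduce algebraic independence from (\ref{eq:gens}) and the partition of $\mathcal{B}_{PBW}$. The only differences are presentational (your single triangular system with induction on $m_2(\cdot)$ versus the paper's equations (\ref{eq:first})--(\ref{eq:second}) solved recursively along $\preceq$).
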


\begin{proof}
Let $B = B^{i_1}_{-j_1} \dots B^{i_t}_{-j_t}$ be a $\beta$-string with no $B^1_{-1}$-factors, and suppose 
\[
f \in (\mathcal{S}_{\theta^{\vee}-\delta})^{\beta} \cap \mbox{ Span}_{\mathbb{C} } \left((B^1_{-1})^{m(B)} B\right).
\]
Then by Lemma \ref{lem:P1}, 
\[
f = \sum_{y \in B} d(y) E_{32}(-1)^{m_1(y)} E_{31}(-1)^{m_2(y)} y,
\]
 for some coefficients $d(y)\in \mathbb{C}$.  Since $[E_{01}, E_{31}] = - E_{32}$ in $\mathfrak g$, we may compute:
\begin{align}
E_{01}.f &= -\sum_{y \in B} d(y) m_2(y) E_{32}(-1)^{m_1(y)+1} E_{31}(-1)^{m_2(y)-1} y \label{eq:line1}\\
& +\sum_{y \in B} d(y) m_2(y) E_{32}(-1)^{m_1(y)} E_{31}(-1)^{m_2(y)} \left(\sum_{ \{y'\in B| y' \vdash y\} } c(y,y') y' \right) \nonumber
\end{align}
By (\ref{eq:cover}), if $y'\vdash y$, then $m_1(y) = m_1(y')+1$ and $m_2(y)=m_2(y')-1$.  Rewriting each sum in terms of $y' \in B$, the equation (\ref{eq:line1}) is then equivalent to:
\begin{align*}
E_{01}.f &= -\sum_{y' \in B} d(y') m_2(y') E_{32}(-1)^{m_1(y')+1} E_{31}(-1)^{m_2(y')-1} y'\\ 
& +\sum_{y' \in B\backslash \{y_-\}} \left(\sum_{ \{y \in B| y' \vdash y\} } d(y) c(y,y') \right) E_{32}(-1)^{m_1(y')+1} E_{31}(-1)^{m_2(y')-1}  y' 
\end{align*}
We thus see that $E_{01}.f = 0$ if and only if the following two equations are satisfied:
\begin{equation}\label{eq:first}
-d(y') m_2(y')\ + \sum_{ \{y \in B| y' \vdash y\} } d(y) c(y,y') \ =\ 0 \quad (y'\in B\backslash \{y_-\})
\end{equation}
\begin{equation}\label{eq:second}
d(y_-) m_2(y_-)\ =\ 0
\end{equation}

From Remark \ref{rmk:minimum}, it follows that $m_2(y')>0$ for all $y' \in B\backslash \{y_-\}$.  We may thus solve (\ref{eq:first}) in terms of $d(y')$, and we see that $d(y')$ is linearly dependent on the coefficients $ d(y)$ for which $y' \vdash y$.  This fact may then be used recursively, for all $y'' \preceq y'$, to show that $d(y') = C\ d(y_-)$ for some constant $C\in \mathbb{C}$ which depends only on the integers $m_2(y)$ and $c(y,y')$ $(y,y' \in B)$.  Thus, we have:
\begin{equation}\label{eq:third}
\mbox{dim}_{\mathbb{C}}\left\{(\mathcal{S}_{\theta^{\vee}-\delta})^{\beta} \cap \mbox{ Span}_{\mathbb{C} } \left((B^1_{-1})^{m(B)} B\right)\right\} \le 1.
\end{equation}

Now recall that $a^p b^q c^r w^s \in$ Span$_{\mathbb{C}}( B_{p,q,r,s})$ for $(p,q,r,s) \in (\mathbb{Z}_{\ge 0})^4$.
The elements $a,b,c,w$ belong to $\mathcal{S}_{\theta^{\vee}-\delta}$ since:
\[
wt(a) =\ \theta^{\vee}-\delta, \quad wt(b)=\ 2\theta^{\vee}-2\delta, \quad wt(c) =\ wt(w) =\  3\theta^{\vee}-3\delta.
\]
One may also see that $a,b,c,w \in (\mathcal{S}_{\theta^{\vee}-\delta})^{\beta}$ by checking that: $E_{01}.a = E_{01}.b = E_{01}.c = E_{01}.w = 0$.  It thus follows that the dimension in (\ref{eq:third}) is equal to 1 when $B=B_{p,q,r,s}$ for some $(p,q,r,s) \in (\mathbb{Z}_{\ge 0})^4$.

Remark \ref{rmk:minimum} also shows that $m_2(y_-)=0$ if and only if $B=B_{p,q,r,s}$ for some $(p,q,r,s)\in (\mathbb{Z}_{\ge 0})^4$.  From the second equation (\ref{eq:second}) above, we see that $d(y_-)=0$ whenever $m_2(y_-)>0$.  Hence, if $B \neq B_{p,q,r,s}$ for any $(p,q,r,s)\in (\mathbb{Z}_{\ge 0})^4$, then the dimension in (\ref{eq:third}) is 0.    This shows the first part of the lemma.

From (\ref{eq:gens}) and the fact that $\beta$-strings partition $\mathcal{B}_{PBW}$, it follows that
\[ \{a^p b^q c^r w^s\ |\ p,q,r,s \in \mathbb{Z}_{\ge 0} \} \]
is a linearly independent set.  Thus $\{a,b,c,w\}$ is a set of algebraically independent generators for a polynomial subalgebra of $(\mathcal{S}_{\theta^{\vee}-\delta})^{\beta}$.  Finally, it follows from the first part of the lemma that $a,b,c,w$ in fact generate the entire algebra $(\mathcal{S}_{\theta^{\vee}-\delta})^{\beta}$.
\end{proof}

\begin{Lem}\label{lem:P3}
The subalgebra $(\mathcal{S}_{\theta^{\vee}-\delta})^{\frak{n}_+}$ of $\mathcal{S}_{\theta^{\vee}-\delta}$, consisting of elements invariant under the adjoint action of $\frak{n}_+$, is a polynomial algebra with generators $u,v,w$.
\end{Lem}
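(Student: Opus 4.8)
The plan is to deduce this from Lemma \ref{lem:P2} together with a short argument inside the polynomial ring $\mathcal{S}(\hat{\mathfrak g}_-)$. Recall that $\mathfrak{n}_+\subseteq\mathfrak g$ (type $G_2$) is generated as a Lie algebra by the two simple root vectors $E_{10}$ and $E_{01}$, so an element of $\mathcal{S}_{\theta^\vee-\delta}$ is $\mathfrak{n}_+$-invariant if and only if it is annihilated by both $E_{10}$ and $E_{01}$. Since Lemma \ref{lem:P2} identifies the subalgebra of $\mathcal{S}_{\theta^\vee-\delta}$ annihilated by $E_{01}$ with $\mathbb{C}[a,b,c,w]$, the statement reduces to showing
\[
\{\, f\in\mathbb{C}[a,b,c,w] \;:\; E_{10}.f=0\,\}=\mathbb{C}[u,v,w].
\]

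First I would change the generating set. From the definitions of $u$ and $v$ one gets $b=\tfrac13 a^2-u$ and $c=-\tfrac1{27}a^3+\tfrac13 au-\tfrac13 v$, hence $\mathbb{C}[a,b,c,w]=\mathbb{C}[a,u,v,w]$; since by Lemma \ref{lem:P2} this algebra is a polynomial algebra of transcendence degree $4$ over $\mathbb{C}$, the four generators $a,u,v,w$ are automatically algebraically independent, and in particular so are $u,v,w$. Next I would record the action of $E_{10}$ on the generators. Because $[E_{10},E_{31}]=[E_{10},E_{32}]=0$ in $\mathfrak g$, we get $E_{10}.w=0$ immediately; and a direct computation with the remaining bracket relations of $\mathfrak g$ (in the normalization of \cite{AL}) gives $E_{10}.a=\lambda\,E_{31}(-1)$ for some nonzero scalar $\lambda$, together with $E_{10}.u=E_{10}.v=0$. (The $E_{01}$-invariance $E_{01}.u=E_{01}.v=E_{01}.w=0$ is already contained in Lemma \ref{lem:P2}, since $u,v,w\in\mathbb{C}[a,b,c,w]$.) This proves the inclusion $\mathbb{C}[u,v,w]\subseteq(\mathcal{S}_{\theta^\vee-\delta})^{\mathfrak{n}_+}$.

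For the opposite inclusion, observe that $E_{10}$ acts on $\mathcal{S}(\hat{\mathfrak g}_-)$ by a derivation that kills $u$, $v$, and $w$, so for any polynomial $g$ we have
\[
E_{10}.\,g(a,u,v,w)=\frac{\partial g}{\partial a}(a,u,v,w)\cdot E_{10}.a=\lambda\,E_{31}(-1)\cdot\frac{\partial g}{\partial a}(a,u,v,w).
\]
As $\mathcal{S}(\hat{\mathfrak g}_-)$ is an integral domain and $\lambda E_{31}(-1)\neq0$, this vanishes precisely when $\tfrac{\partial g}{\partial a}(a,u,v,w)=0$, and since $a,u,v,w$ are algebraically independent this forces $\partial g/\partial a=0$ as a formal polynomial, i.e.\ $g\in\mathbb{C}[u,v,w]$. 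Hence $(\mathcal{S}_{\theta^\vee-\delta})^{\mathfrak{n}_+}=\mathbb{C}[u,v,w]$, which is a polynomial algebra by the algebraic independence of $u,v,w$. The step I expect to be the main obstacle is the explicit verification that $E_{10}.u=E_{10}.v=0$: this is a finite but somewhat delicate structure-constant computation in $G_2$, since the short/long root asymmetry produces factors of $3$ that must cancel against the coefficients $\tfrac13$ and $\tfrac29$ built into the definitions of $u$ and $v$.
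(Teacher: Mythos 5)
Your proposal is correct, and its first half (reducing to $\{f\in\mathbb{C}[a,b,c,w]: E_{10}.f=0\}$ via Lemma \ref{lem:P2}, then checking $E_{10}.u=E_{10}.v=E_{10}.w=0$) is exactly how the paper begins; the paper records the formulas $E_{10}.a=-3E_{31}(-1)$, $E_{10}.b=-2E_{31}(-1)a$, $E_{10}.c=E_{31}(-1)b$, $E_{10}.w=0$, which make your flagged computation a one-line check, so no gap there. Where you genuinely diverge is in the completeness step, i.e.\ showing there are no further invariants. The paper expands $E_{10}.f$ for $f=\sum C_{p,q,r}a^pb^qc^r$, extracts the linear recurrence (\ref{eq:coeff2}) among the coefficients, bounds the solution space in each weight $n(\theta^{\vee}-\delta)$ by the partition count $d(n)$ of (\ref{eq:parts}), and then matches this bound with the $d(n)$ independent solutions $u^qv^r$, $2q+3r=n$. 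You instead change generators to $a,u,v,w$ (using $b=\tfrac13a^2-u$, $c=-\tfrac1{27}a^3+\tfrac13au-\tfrac13v$, with algebraic independence forced by transcendence degree) and observe that $E_{10}$ is a derivation killing $u,v,w$, so $E_{10}.g(a,u,v,w)=\lambda\,E_{31}(-1)\,\tfrac{\partial g}{\partial a}(a,u,v,w)$; since $\mathcal{S}(\hat{\mathfrak g}_-)$ is a domain and $a,u,v,w$ are algebraically independent, invariance forces $\partial g/\partial a=0$, i.e.\ $g\in\mathbb{C}[u,v,w]$. This ``kernel of a derivation'' argument is shorter and more conceptual, avoids the coefficient recursion and the counting formula $d(n)$ entirely, and also delivers algebraic independence of $u,v,w$ for free; the paper's route is more pedestrian but keeps everything at the level of explicit linear equations on coefficients, and the formula $d(n)$ it produces is reused later in Proposition \ref{prop:sing}. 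Do note that the well-definedness of $\partial g/\partial a$ rests on the algebraic independence of $a,u,v,w$, which you correctly establish before invoking it.
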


\begin{proof}
Since the Lie algebra $\frak{n}_+$ is generated by $E_{10}$ and $E_{01}$, the algebra $(\mathcal{S}_{\theta^{\vee}-\delta})^{\frak{n}_+}$ consists of all elements $f \in \mathcal{S}_{\theta^{\vee}-\delta}$ which are invariant under the adjoint action of both $E_{10}$ and $E_{01}$.  By Lemma \ref{lem:P2}, it is clear from their definition that $u$,$v$,$w$ belong to $(\mathcal{S}_{\theta^{\vee}-\delta})^{\beta}$.  One may check that \[ E_{10}.a = -3 E_{31}(-1),\quad E_{10}.b = -2 E_{31}(-1) a,\]\[\quad E_{10}.c = E_{31}(-1)b,\quad  \mbox{and} \quad E_{10}.w = 0.\]   These equations may be used to show $E_{10}.u= E_{10}.v= E_{10}.w= 0$, and thus  $u,v,w \in (\mathcal{S}_{\theta^{\vee}-\delta})^{\frak{n}_+}$.  Since $u=\tfrac 1 3 a^2 - b$, $v=\tfrac 2 9 a^3 -a b -3c$, and the elements $a,b,c,w$ are algebraically independent in $\mathcal{S}(\hat{\mathfrak g}_-)$, the elements $u,v,w$ are also algebraically independent in $\mathcal{S}(\hat{\mathfrak g}_-)$.

Now suppose $f \in (\mathcal{S}_{\theta^{\vee}-\delta})^{\frak{n}_+}$.  We must show that $f$ is a polynomial in $u,v,w$.  We may assume that $f$ is homogeneous with respect to the $\widehat{Q}$-grading, say $wt(f)= n(\theta^{\vee}-\delta)$.  Since $E_{01}.f=0$, it follows from Lemma \ref{lem:P2} that
\[ f = \sum_{\substack{p,q,r,s \in \mathbb{Z}_{\ge 0}\\ p+2q+3r+3s = n} } C_{p,q,r,s} a^p b^q c^r w^s,\]
for some constants $C_{p,q,r,s}\in \mathbb{C}$.  Since $E_{10}.w = 0$, it is sufficient to consider the case where $C_{p,q,r,s}=0$ whenever $s>0$, and we write $C_{p,q,r}=C_{p,q,r,0}$.  

Next, by assumption $E_{10}.f =0$, and we expand this equation as follows:
\begin{align}
E_{10}.f =&\phantom{LL} \sum_{\substack{ p+2q+3r = n} } C_{p,q,r} E_{10}.( a^p b^q c^r)\notag \\
=&\phantom{LL} \sum_{\substack{ p+2q+3r = n} } C_{p,q,r} \tbinom{p}{1} (-3)E_{31}(-1)a^{p-1} b^q c^r \label{eq:thirdpart} \\
&+ \sum_{\substack{ p+2q+3r = n} } C_{p,q,r} \tbinom{q}{1} (-2)E_{31}(-1)a^{p+1} b^{q-1} c^r\notag \\
&+ \sum_{\substack{ p+2q+3r = n} } C_{p,q,r} \tbinom{q}{1} E_{31}(-1)a^p b^{q+1} c^{r-1}\notag \\
 = &\ \  0. \notag
\end{align}
Now the set
 \begin{equation*}
 \{a^{p'} b^{q'} c^{r'} E_{31}(-1)|\ p',q',r' \in \mathbb{Z}_{\ge 0} \text{ and } p'+2q'+3r'=n-1\}
 \end{equation*}
  is linearly independent. Upon setting the coefficient of each $a^{p'} b^{q'} c^{r'} E_{31}(-1)$ equal zero in (\ref{eq:thirdpart}), we obtain the equations
\begin{equation}\label{eq:coeff1}
 -3(p'+1)C_{p'+1,q',r'} -2(q'+1)C_{p'-1,q'+1,r'}+ (r'+1)C_{p',q'-1,r'+1}\ =\ 0
 \end{equation}
 for $p'+2q'+3r' = n-1$, where we set $C_{p',q',r'} = 0$ whenever $p'$,$q'$, or $r'$ is negative.

After setting $p=p'+1, q=q', r=r'$ in 
(\ref{eq:coeff1}), we may rewrite the equations as follows:
\begin{equation}\label{eq:coeff2} 3p\ C_{p,q,r} =  2p\ C_{p-2,q+1,r}- (r+1)C_{p-1,q-1,r+1} \quad \quad \quad (p+2q+3r = n).
\end{equation}
We may then use
(\ref{eq:coeff2}) recursively to write each of the variables $C_{p,q,r}$, with $p>0$ and $q,r \ge 0$, as a linear combination of the variables $C_{0,q,r}$, with $2q + 3r =n$.  Hence the number of independent solutions to this system is at most the number of ways, $d(n)$, to partition $n$ into $n = 2q +3r$: \begin{equation}\label{eq:parts}d(n) = \left \{ \begin{array}{ll} \left \lfloor \tfrac{n+6}{6} \right \rfloor & \text{ if $n$ is even} \\
\left \lfloor \frac{n+3}{6} \right \rfloor & \text{ if $n$ is odd,}
\end{array}\right. \end{equation}
where $\lfloor \cdot \rfloor$ is the greatest integer function.  On the other hand, there are $d(n)$ independent solutions, $f$, to the system (\ref{eq:coeff2}) given by the linearly independent set\begin{equation}\label{eq:set} \{f = u^q v^r | 2q + 3r =n \}, \end{equation}
since $wt(u^q v^r) = n(\theta^{\vee} - \delta)$ whenever $2q + 3r = n$.  Hence, (\ref{eq:set}) gives all $d(n)$ solutions to the system, and any solution $f$ to (\ref{eq:thirdpart}) must be a polynomial in $u, v$, which is what we needed to show. 
\end{proof}

We now give the final lemma of the section.  Recall that $u,v,w$ may also be regarded as elements of $\mathcal{U}(\hat{\mathfrak g}_-)$.
\begin{Lem} \label{lem:P3:universal}
The subalgebra $(\mathcal{U}_{\theta^{\vee}-\delta})^{\mathfrak n_+}$ of  $\mathcal{U}_{\theta^{\vee}-\delta}$, consisting of elements invariant under the adjoint action of $\frak{n}_+$, is generated by 1 along with the elements $u,v,w$.
\end{Lem}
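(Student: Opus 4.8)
The plan is to reduce the statement to its commutative analogue, Lemma~\ref{lem:P3}, by comparing $\mathcal{U}(\hat{\mathfrak g}_-)$ with the standard filtration by monomial length, whose associated graded is identified with $\mathcal{S}(\hat{\mathfrak g}_-)$ via the PBW theorem, and then matching dimensions one $\widehat{Q}$-weight space at a time. Two preliminary observations set this up. First, the $\mathfrak g$-module isomorphism $\omega$ preserves the $\widehat{Q}$-grading and carries $\mathcal{S}_{\theta^{\vee}-\delta}$ onto $\mathcal{U}_{\theta^{\vee}-\delta}$, hence restricts to a $\widehat{Q}$-graded linear isomorphism $(\mathcal{S}_{\theta^{\vee}-\delta})^{\mathfrak n_+}\cong(\mathcal{U}_{\theta^{\vee}-\delta})^{\mathfrak n_+}$; combining this with Lemma~\ref{lem:P3} and the weights $wt(u)=2\theta^{\vee}-2\delta$, $wt(v)=wt(w)=3\theta^{\vee}-3\delta$ shows that $\dim\,(\mathcal{U}_{\theta^{\vee}-\delta})^{\mathfrak n_+}_{(N\theta^{\vee}-N\delta)}=\#\{(i,j,l)\in\mathbb{Z}_{\ge 0}^3\mid 2i+3j+3l=N\}$ for every $N\in\mathbb{Z}_{\ge 0}$. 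Second, the length filtration on $\mathcal{U}(\hat{\mathfrak g}_-)$ is stable under the adjoint $\mathfrak g$-action, since $[\mathfrak g,\hat{\mathfrak g}_-]\subseteq\hat{\mathfrak g}_-$, and the induced action on the associated graded $\mathcal{S}(\hat{\mathfrak g}_-)$ is the adjoint action there.

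Next I would verify that $u,v,w$, now viewed in $\mathcal{U}(\hat{\mathfrak g}_-)$, lie in $(\mathcal{U}_{\theta^{\vee}-\delta})^{\mathfrak n_+}$. Their weights, recalled above, place them in $\mathcal{U}_{\theta^{\vee}-\delta}$. For invariance under $\mathfrak n_+$, recall that $\mathfrak n_+$ is generated by $E_{10}$ and $E_{01}$ and that the adjoint action of $\mathfrak g$ on $\mathcal{U}(\hat{\mathfrak g}_-)$ is by derivations, exactly as on $\mathcal{S}(\hat{\mathfrak g}_-)$. The identities $E_{10}.a=-3E_{31}(-1)$, $E_{10}.b=-2E_{31}(-1)a$, $E_{10}.c=E_{31}(-1)b$, $E_{10}.w=0$ and $E_{01}.a=E_{01}.b=E_{01}.c=E_{01}.w=0$ are consequences of the Lie bracket relations of $\hat{\mathfrak g}$, and they hold verbatim in $\mathcal{U}(\hat{\mathfrak g}_-)$ because the elements $E_{31}(-1)$, $a$, $b$ occurring on the right-hand sides commute there; hence the derivation-rule computations in the proofs of Lemmas~\ref{lem:P2} and~\ref{lem:P3} carry over to give $E_{10}.u=E_{10}.v=E_{10}.w=0$ and $E_{01}.u=E_{01}.v=E_{01}.w=0$ in $\mathcal{U}(\hat{\mathfrak g}_-)$. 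Consequently every monomial $u^iv^jw^l$, and therefore the subalgebra $A\subseteq\mathcal{U}(\hat{\mathfrak g}_-)$ generated by $1,u,v,w$, is contained in $(\mathcal{U}_{\theta^{\vee}-\delta})^{\mathfrak n_+}$.

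For the opposite inclusion I would pass to symbols in the associated graded. Since $a,b,c,w$ are $\mathbb{C}$-linear combinations of PBW monomials of a single length, the elements $u,v,w$ of $\mathcal{U}(\hat{\mathfrak g}_-)$ have lengths $2,3,2$ respectively, and their symbols are precisely the elements $u,v,w$ of $\mathcal{S}(\hat{\mathfrak g}_-)$, which are nonzero. By multiplicativity of the symbol map, the symbol of $u^iv^jw^l$ is the commutative product $u^iv^jw^l\in\mathcal{S}(\hat{\mathfrak g}_-)$, and by Lemma~\ref{lem:P3} these products are linearly independent; comparing top-length components then shows that $\{u^iv^jw^l\mid i,j,l\in\mathbb{Z}_{\ge 0}\}$ is linearly independent in $\mathcal{U}(\hat{\mathfrak g}_-)$. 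In the weight space $(\mathcal{U}_{\theta^{\vee}-\delta})^{\mathfrak n_+}_{(N\theta^{\vee}-N\delta)}$ there are exactly $\#\{(i,j,l)\mid 2i+3j+3l=N\}$ of these monomials, they are linearly independent, and by the preceding paragraph they lie in that space, which has exactly that dimension; hence they form a basis of it. Summing over $N$, the set $\{u^iv^jw^l\}$ is a basis of $(\mathcal{U}_{\theta^{\vee}-\delta})^{\mathfrak n_+}$, so $(\mathcal{U}_{\theta^{\vee}-\delta})^{\mathfrak n_+}$ is spanned by monomials in $u,v,w$ and, containing also $1$, coincides with $A$.

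I expect the step requiring the most care to be the symbol computation of the third paragraph: one must be sure that the top-length part of $u^iv^jw^l$ is exactly the commutative product $u^iv^jw^l$ in $\mathcal{S}(\hat{\mathfrak g}_-)$, so that the noncommutativity of $\mathcal{U}(\hat{\mathfrak g}_-)$ introduces no additional cancellation among these monomials; this relies on multiplicativity of the symbol map and on the algebraic independence of $u,v,w$ in $\mathcal{S}(\hat{\mathfrak g}_-)$ established in Lemma~\ref{lem:P3}. A related point is that $\omega$ is only a $\mathfrak g$-module isomorphism and not an algebra homomorphism, so the $\mathfrak n_+$-invariance of $v$ in $\mathcal{U}(\hat{\mathfrak g}_-)$ cannot simply be inferred from that of its counterpart in $\mathcal{S}(\hat{\mathfrak g}_-)$ via $\omega$; it genuinely requires the derivation computation of the second paragraph.
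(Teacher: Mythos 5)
Your proposal is correct, and its skeleton is the same as the paper's: use the graded $\mathfrak g$-module isomorphism $\omega$ together with Lemma \ref{lem:P3} to pin down the dimension of each weight space of $(\mathcal{U}_{\theta^{\vee}-\delta})^{\mathfrak n_+}$, then show the monomials $u^pv^qw^r$ lie in that algebra and are linearly independent. The two key sub-steps, however, are handled differently. For invariance of $u,v,w$ in $\mathcal{U}(\hat{\mathfrak g}_-)$, the paper does not redo any derivation computation: it records the symmetrization values $\omega(u)=u$, $\omega(v)=v-3w$, $\omega(w)=w$ (see (\ref{eq:mapping2})), so invariance of $u,w$ and of $v=\omega(v+3w)$ is transported through the $\mathfrak g$-module map $\omega$; this also explains structurally why the particular ordered lift $v$ is invariant. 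Your direct verification in $\mathcal{U}(\hat{\mathfrak g}_-)$ does work, but be aware that the justification you give (the right-hand sides $E_{31}(-1)$, $a$, $b$ commute) is not by itself the point: what one must check is that the cancellations occurring in the left-hand computations of $E_{10}.c$, $E_{01}.b$, $E_{01}.c$ only require reorderings of commuting pairs such as $E_{31}(-1)E_{32}(-1)$ (note $E_{31}(-1)$ and $E_{01}(-1)$ do \emph{not} commute, which is exactly the source of the $-3w$ correction in $\omega(v)$); carrying out the computation confirms this, so there is no gap, only a slightly glossed justification. For linear independence of $\{u^pv^qw^r\}$ in $\mathcal{U}(\hat{\mathfrak g}_-)$, the paper chooses explicit PBW monomials $y(p,q,r)$ and projections satisfying (\ref{eq:mapping5}), then runs an induction on the $w$-exponent, whereas you pass to the length filtration and use multiplicativity of the symbol map together with the algebraic independence of $u,v,w$ in $\mathcal{S}(\hat{\mathfrak g}_-)$ from Lemma \ref{lem:P3}. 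Your associated-graded argument is cleaner and avoids exhibiting the witnesses $y(p,q,r)$; the paper's projection argument has the advantage that the same projections $\pi_{p',q'}$ are reused later (Appendix A and Proposition \ref{prop:sing}) to extract the coefficients of the singular vector, so it earns its keep beyond this lemma.
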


\begin{proof}
Recall the $\widehat{Q}$-gradings of $\mathcal{S}(\hat{\mathfrak g}_-)$ and $\mathcal{U}(\hat{\mathfrak g}_-)$.  We have
\[
(\mathcal{S}_{\theta^{\vee}-\delta})^{\mathfrak n_+} = \mathcal{S}^0 \oplus \mathcal{S}^1 \oplus \mathcal{S}^2 \oplus \cdots \qquad (\mbox{resp. } (\mathcal{U}_{\theta^{\vee}-\delta})^{\mathfrak n_+} = \mathcal{U}^0 \oplus \mathcal{U}^1 \oplus \mathcal{U}^2 \oplus \cdots),
\]
where we set
\[
\mathcal{S}^n = (\mathcal{S}_{\theta^{\vee}-\delta})^{\mathfrak n_+} \cap \mathcal{S}(\hat{\mathfrak g}_-)_{(n\theta^{\vee}-n\delta)} \qquad
\left(\mbox{resp. }
\mathcal{U}^n = (\mathcal{U}_{\theta^{\vee}-\delta})^{\mathfrak n_+} \cap \mathcal{U}(\hat{\mathfrak g}_-)_{(n\theta^{\vee}-n\delta)}\right)
\]
for $n \in \mathbb{Z}_{\ge 0}$.
Since the map $\omega: \mathcal{S}(\hat{\mathfrak g}_-) \rightarrow \mathcal{U}(\hat{\mathfrak g}_-)$ is  a $\mathfrak g$-module isomorphism, it follows from (\ref{eq:grading1}) that both
\begin{equation}\label{eq:mapping1}
\omega \left( (\mathcal{S}_{\theta^{\vee}-\delta})^{\mathfrak n_+}\right) = (\mathcal{U}_{\theta^{\vee}-\delta})^{\mathfrak n_+} \qquad \mbox{and} \qquad \omega(\mathcal{S}^n) = \mathcal{U}^n\ (n \in \mathbb{Z}_{\ge 0}).
\end{equation}
Using (\ref{eq:isom1}), we may check that
\begin{equation}\label{eq:mapping2}
\omega(u)=u, \quad \omega(v)=v-3w, \ \ \mbox{ and } \ \ \omega(w)=w.
\end{equation}
From Lemma \ref{lem:P3}, we have
\[
\mathcal{S}^n = \mbox{Span}_{\mathbb{C} }\{u^p v^q w^r\ |\ p,q,r \in \mathbb{Z}_{\ge 0}; 2p + 3q + 3r = n \},
\]
and it follows from (\ref{eq:mapping2}) that
\[
 \mathcal{U}^n \supseteq \mbox{Span}_{\mathbb{C} }\{u^p v^q w^r\ |\ p,q,r \in \mathbb{Z}_{\ge 0}; 2p + 3q + 3r = n \}.
\]
From (\ref{eq:mapping1}), it is thus sufficient to show that the subset $\{u^p v^q w^r\ |\ p,q,r \in \mathbb{Z}_{\ge 0}\} \subseteq (\mathcal{U}_{\theta^{\vee}-\delta})^{\mathfrak n_+}$ is linearly independent.

Consider $\mathcal{B}_{PBW}$ as a basis for $\mathcal{U}(\hat{\mathfrak g}_-)$.  For each $y \in \mathcal{B}_{PBW}$, let 
$ \pi_y: \mathcal{U}(\hat{\mathfrak g}_-) \rightarrow \mathbb{C} y$
denote the projection which maps $y$ to itself and all other basis elements to zero.
For each $(p,q,r) \in (\mathbb{Z}_{\ge 0})^3$, we may find an element $y(p,q,r) \in \mathcal{B}_{PBW}$ such that the following holds:  if $p',q',r' \in \mathbb{Z}_{\ge 0}$ and $r' \ge r$, then
\begin{equation}\label{eq:mapping5}
\pi_{y(p,q,r)}(u^{p'}v^{q'}w^{r'}) \neq 0, \quad \text{if and only if} \quad \text{ $p'=p$, $q'=q$, and $r'=r$.}
\end{equation}
  For example, if we let
\[
y(p,q,r) = E_{31}(-1)^{p+2	q+r} E_{11}(-1)^{p} E_{01}(-1)^{q} E_{32}(-2)^{r} \in \mathcal{B}_{PBW}
\]
for each $(p,q,r) \in (\mathbb{Z}_{\ge 0})^3$, then (\ref{eq:mapping5}) is satisfied.

Thus, let us fix elements $y(p,q,r)\in \mathcal{B}_{p,q,r}$ which satisfy (\ref{eq:mapping5}), and write $\pi_{p,q,r}=\pi_{y(p,q,r)}$ for each $(p,q,r) \in (\mathbb{Z}_{\ge 0})^3$.  Then suppose we have  an equation
\begin{equation}\label{eq:zerosum}
\sum_{ p',q',r' \in \mathbb{Z}_{\ge 0}  } C_{p',q',r'}\ u^{p'} v^{q'} w^{r'}\ =\ 0,
\end{equation}
where $C_{p',q',r'} \in \mathbb{C}$ are constants of which only finitely many are nonzero.  After applying the projection $\pi_{p,q,0}$ to both sides of the equation (\ref{eq:zerosum}), we obtain the equation $C_{p,q,0}u^p v^q = 0$, whenever $p,q \in \mathbb{Z}_{\ge 0}$.  Hence $C_{p,q,0} = 0$ for all $p,q \in \mathbb{Z}_{\ge 0}$.  We may then proceed in a similar fashion to show by induction that $C_{p,q,r} = 0$ for all $p,q,r \in \mathbb{Z}_{\ge 0}$.  Therefore $\{ u^p v^q w^r\ |\ p,q,r \in \mathbb{Z}_{\ge 0} \}$ is a linearly independent subset of $(\mathcal{U}_{\theta^{\vee}-\delta})^{\mathfrak n_+}$.
\end{proof}

\bigskip

\section{Formula for the singular vectors}
Let $k = -2 + m + \tfrac i 3$ for some $m \in \mathbb{Z}_{\ge 0}$ and $i\in \{1,2\}$, and let $n=3k+7$.  It follows from Lemma \ref{lem:exist} that there exists a singular vector in $N(k,0)$ which generates the maximal submodule $J(k,0)$ and satisfies properties $(P1)-(P4)$ of the lemma.  In this section, we will describe the set of elements in $N(k,0)$ which satisfy all four of these properties, and this description allows us to give a formula for the singular vectors.

Suppose first that $v_0 \in N(k,0)$ satisfies the three properties $(P1)-(P3)$.  I.e., suppose that
\begin{equation*}
v_0 \in \left( N(k,0)_{ (k\Lambda_0+ n\theta^{\vee}-n\delta) } \right)^{\mathfrak n_+}.
\end{equation*}
Since the map $\phi: \mathcal{U}(\hat{\mathfrak g}_-) \rightarrow N(k,0)$ is a $\mathfrak g$-module isomorphism, it follows from (\ref{eq:grading2}) that
\[ v_0 = f.1, \quad \mbox{ for some } f \in \left(\mathcal{U}(\hat{\mathfrak g}_-)_{(n\theta^{\vee}-n\delta)}\right)^{\mathfrak n_+}.
\]
From Lemma \ref{lem:P3:universal}, we thus have
\begin{equation}\label{eq:sing0}
v_0 = \sum_{p,q,r \in\mathbb{Z}_{\ge 0} } C_{p,q,r}\ u^p v^q w^r. \mathbf{1},
\end{equation}
for some constants $C_{p,q,r} \in \mathbb{C}$, such that $C_{p,q,r} \neq 0$ only if $2p+3q+3r=n$.  

Now if $v_0 \in N(k,0)_{(k\Lambda_0+n\theta^{\vee}-n\delta)}$ is a singular vector, then it must satisfy all four of the properties $(P1)-(P4)$.  In this case, we are able to explicitly determine the coefficients appearing in (\ref{eq:sing0}).  For the purpose of classification using Zhu's theory, however, it is only necessary to compute a certain subset of these coefficients.  Recall from Proposition 1.5 of \cite{AL} (see also \cite{FZ}) that we have an isomorphism $F: A(N(k,0)) \rightarrow \mathcal{U}(\frak g)$, given by 
\begin{equation} \label{eq:isomF}
 F( [a_1(-n_1-1) \cdots a_m(-n_m-1) \mathbf{1}]) = (-1)^{n_1+ \cdots +n_m} a_1 \cdots a_m,
 \end{equation}
  for  $a_1, ... , a_m \in \frak g$ and  $n_1, ... , n_m \in \mathbb Z_{+}$.  It is then clear from the definition of $w$ that $F([f w.\mathbf 1]) = 0$ for all $f \in \mathcal{U}(\hat{\mathfrak g }_-)$.  Hence, it is sufficient to only compute the formula for a singular vector modulo the subspace $\mathcal{U}(\hat{\mathfrak g}_-)w.\mathbf{1} \subseteq N(k,0)$.  I.e., we need only to compute the coefficients of the form $C_{p,q,0}$ appearing in (\ref{eq:sing0}).

We now give a formula for singular vectors modulo $\mathcal{U}(\hat{\mathfrak g}_-)w.\mathbf{1}$.  In the proof we make use of the notation and lemmas provided in Appendix A.
\begin{Prop}\label{prop:sing}
Let $k = -2 + m + \frac{i}{3}$ for some $m \in \mathbb{Z}_{\ge 0}$ and $i \in \{1,2\}$, and let $n=3k+7$.  Suppose that $v_0 \in N(k,0)_{(k\Lambda_0+n\theta^{\vee}-n\delta)}$
is a singular vector.  
If $n$ is even, then
\[v_0 \equiv \quad \sum _{j=0}^{d(n)-1} b_j u^{n/2-3j	} v^{2j}.\mathbf{1} \quad \mbox{ mod } \mathcal{U}(\hat{\mathfrak g}_-)w.\mathbf{1},\]
where $d(n)$ is the same as in (\ref{eq:parts}), $b_0 \in \mathbb{C}\backslash\{0\}$, and 
\begin{equation}\label{eq:sing1}
b_j = \frac{2^j \binom{n/2}{2} \binom{n/2-3}{2}\cdots\binom{n/2-3j+3}{2}}{3^j (2j)!}b_0 \quad (j=1, \dots, d(n)-1).
\end{equation}
If $n$ is odd, then
\[v_0 \equiv \quad \sum _{j=0}^{d(n)-1} b_j u^{(n-3)/2-3j} v^{1+2j}.\mathbf{1} \quad \mbox{ mod } \mathcal{U}(\hat{\mathfrak g}_-)w.\mathbf{1},\]
where $b_0 \in \mathbb{C}\backslash\{0\}$ and 
\begin{equation}\label{eq:sing2}
b_j = \frac{2^j \binom{(n-3)/2}{2} \binom{(n-3)/2-3}{2}\cdots\binom{(n-3)/2-3j+3}{2}}{3^j (2j+1)!}b_0 \quad (j=1, \dots, d(n)-1).
\end{equation}
\end{Prop}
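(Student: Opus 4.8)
The starting point is equation \eqref{eq:sing0} together with the reduction, noted just before the proposition, that modulo $\mathcal{U}(\hat{\mathfrak g}_-)w.\mathbf{1}$ we need only track the coefficients $C_{p,q,0}$; write $b_j$ for the relevant subsequence once we see which $(p,q)$ survive. The plan is to translate property $(P4)$, i.e. $F_{32}(1).v_0 = 0$, into a recursion among the $C_{p,q,0}$ and then solve it. First I would record the weight constraint $2p+3q = n$ coming from $(P1)$; for fixed parity of $n$ this is a one-parameter family, parametrized by $q$, and the monomials are $u^{n/2-3j}v^{2j}$ (even case) or $u^{(n-3)/2-3j}v^{1+2j}$ (odd case) after writing $q = 2j$ or $q = 1+2j$ — the even/odd split in $q$ itself will fall out of the recursion, which relates $C_{p,q,0}$ to $C_{p-3,q+2,0}$ (shifting $q$ by $2$). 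So the main computation is: apply the operator $F_{32}(1)$ to $u^p v^q w^r.\mathbf{1}$ in $N(k,0)$, expand using the commutation relations in $\hat{\mathfrak g}$ (here the level $k$ enters, via the central term and via $F_{32}(1)$ acting on $E_{\ast}(-1)$ factors producing Cartan elements that act on $\mathbf{1}$ as scalars depending on $k$, hence on $n = 3k+7$), and collect the result in the PBW basis modulo $\mathcal{U}(\hat{\mathfrak g}_-)w.\mathbf{1}$.

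I expect that $F_{32}(1)$ applied to a monomial $u^p v^q.\mathbf{1}$ produces a combination of $u^{p-a}v^{q+b}(\text{something}).\mathbf{1}$ terms, and that after reducing modulo $w$ and extracting the coefficient of a suitably chosen PBW monomial — the kind of bookkeeping handled by the projection lemmas in Appendix A — the condition $F_{32}(1).v_0 = 0$ becomes a two-term recursion of the shape
\[
q(q-1)\,C_{p,q,0} \;=\; (\text{linear in } n,p)\cdot C_{p-3,q+2,0},
\]
or rather, reindexed in $j$, a relation expressing $b_{j}$ in terms of $b_{j-1}$ with ratio a product of binomial-type factors. Matching that ratio against \eqref{eq:sing1}–\eqref{eq:sing2}: the factor $\binom{n/2-3j+3}{2}$ should be exactly $\binom{p+1}{2}$ or similar evaluated at the $j$-th step (the power of $u$ dropping by $3$ each time, from $n/2$ down), the $2^j/3^j$ should come from the structure constants in $u = \tfrac13 a^2 - b$, $v = \tfrac29 a^3 - ab - 3c$ and from $E_{10}.a = -3E_{31}(-1)$ etc., and the $(2j)!$ versus $(2j+1)!$ from iterating $q(q-1)$ starting at $q=0$ or $q=1$. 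Then $b_j = b_0 \prod_{l=1}^{j}(\text{ratio}_l)$ telescopes to the closed forms stated. Finally one argues $b_0 \ne 0$: since Lemma \ref{lem:exist} guarantees a nonzero singular vector $v_k$ exists, and since $w$ does not appear in the $r=0$ part while $v_0 \not\equiv 0$ forces at least one $C_{p,q,0} \ne 0$, the recursion (which determines every $b_j$ from $b_0$) forces $b_0 \ne 0$.

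The main obstacle will be the explicit computation of $F_{32}(1)\cdot u^p v^q.\mathbf{1}$ modulo $\mathcal{U}(\hat{\mathfrak g}_-)w.\mathbf{1}$: $u$ and $v$ are themselves quadratic and cubic in the $E_{\ast}(-1)$, so one must commute $F_{32}(1)$ past a product of up to $3p+3q$ generators, and $F_{32}(1)$ has nonzero bracket with several of the $B^i_{-1}$ generators ($E_{32}(-1), E_{31}(-1)$ via the root $3\alpha+2\beta$, etc.), each producing level-dependent Cartan corrections. The efficient way is to first compute $F_{32}(1).a$, $F_{32}(1).b$, $F_{32}(1).c$, $F_{32}(1).w$ in $\mathcal{U}(\hat{\mathfrak g}_-)$ (with the $.\mathbf{1}$ picking out constants), then use the Leibniz rule for the action on products together with the fact, from Lemma \ref{lem:P3:universal}, that only the $u,v,w$-content matters, reducing the whole calculation to a recursion in $p,q$ with $n$-dependent coefficients. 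Once that recursion is in hand, solving it and verifying \eqref{eq:sing1}–\eqref{eq:sing2} is a routine induction on $j$.
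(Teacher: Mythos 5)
Your plan follows essentially the same route as the paper's proof: use the weight condition and Lemma \ref{lem:P3:universal} to write $v_0=\sum C_{p,q,r}u^pv^qw^r.\mathbf{1}$, impose $F_{32}(1).v_0=0$, extract (via the Appendix~A projections onto suitable PBW monomials) the two-term recursion $\binom{p}{2}C_{p,q,0}=3\binom{q+2}{2}C_{p-3,q+2,0}$, split by parity of $n$, telescope to \eqref{eq:sing1}--\eqref{eq:sing2}, and conclude $b_0\neq 0$ from nonvanishing of the singular vector. The only minor inaccuracy is your expectation that the level $k$ enters the recursion coefficients through central terms; in the actual computation those contributions are annihilated by the chosen projections (which single out monomials containing $F_{10}(-1)$), and $k$ enters only through the constraint $2p+3q=n=3k+7$.
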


\begin{proof}  The definition of a singular vector guarantees that $\hat{\mathfrak n}_+.v_0 = 0$.  Since $\hat{\mathfrak n}_+$ is generated by the elements $E_{10}(0), E_{01}(0), F_{32}(1) \in \hat{\mathfrak g}$, we must have $E_{10}(0).v_0 = E_{01}(0).v_0 = F_{32}(1).v_0 =0$.  Since $E_{10}(0)$ and $E_{01}(0)$ together generate $\mathfrak n_+$, it follows from (\ref{eq:sing0}) that
\[
v_0 = \sum_{p,q,r \in\mathbb{Z}_{\ge 0} } C_{p,q,r}\ u^p v^q w^r. \mathbf{1},
\]
for some constants $C_{p,q,r} \in \mathbb{C}$.  
Rewriting the equation $F_{32}(1).v_0=0$, we then have
\begin{equation}\label{eq:sing3}
 \sum_{p,q,r } C_{p,q,r}\ F_{32}(1).\left( u^p v^q w^r. \mathbf{1} \right) = 0.
\end{equation}

Next, for each $(p',q') \in (\mathbb{Z}_{\ge 0})^2$, let us apply the projection $\pi_{p',q'}$ from Appendix A to both sides of the equation (\ref{eq:sing3}).  By Lemma \ref{lem:app:a}, we obtain the equation
\begin{align}
\pi_{p',q'}(F_{32}(1).v_0) =\ &-2\ C_{p'+2,q',0}\ \ \ (-1)^{p'+q'}3^{q'} \tbinom{p'+2}{2} y(p',q').\mathbf{1} \nonumber  \\
& +6\ C_{p'-1,q'+2,0} (-1)^{p'+q'}3^{q'} \tbinom{q'+2}{2} y(p',q').\mathbf{1} \label{eq:sing4} \\
 =\ &\ \ 0. \nonumber 
\end{align}
Rewriting (\ref{eq:sing4}) we thus have
\begin{equation*}
(-1)^{p'+q'}3^{q'} \left( -2\ C_{p'+2,q',0} \  \tbinom{p'+2}{2} +6\ C_{p'-1,q'+2,0} \tbinom{q'+2}{2} \right) y(p',q').\mathbf{1}\ =\ 0.
\end{equation*} 
Hence, for each $(p',q')\in (\mathbb{Z}_{\ge 0})^2$, we have
\begin{equation}\label{eq:sing5}
 C_{p'+2,q',0} \  \tbinom{p'+2}{2}\ =\ 3\ C_{p'-1,q'+2,0} \tbinom{q'+2}{2}.
\end{equation}
Substituting $p=p'+2$ and $q=q'$ in (\ref{eq:sing5}), we obtain the recurrence relation
\begin{equation}\label{eq:sing6}
 C_{p,q,0} \  \tbinom{p}{2}\ =\ 3\ C_{p-3,q+2,0} \tbinom{q+2}{2}
\end{equation}
for the coefficients of $v_0$.  

Since $wt(v_0) = k\Lambda_0 + n \theta^{\vee}-n\delta$, it follows that $wt(u^p v^q w^r) = n \theta^{\vee} - n\delta$ whenever $C_{p,q,r} \neq 0$.  Notice also that $v_0 \equiv \sum_{p,q} C_{p,q,0} u^p v^q$, mod $\mathcal{U}(\mathfrak g_-)w. \mathbf{1}$.  As in the proof of Lemma \ref{lem:P3}, we see that there are $d(n)$ distinct monomials of the form $u^p v^q$ such that $2p + 3q = n$.  We may thus write
\begin{equation}\label{eq:sing7}
v_0 \equiv 
\begin{cases}
\displaystyle \sum_{j=0}^{d(n)-1} b_j\ u^{n/2-3j} v^{2j} . \mathbf{1} & \text{ if $n$ is even}
\bigskip\\
\displaystyle \sum_{j=0}^{d(n)-1} b_j\ u^{(n-3)/2-3j} v^{2j+1} . \mathbf{1} & \text{ if $n$ is odd},
\end{cases}
\end{equation}
mod $\mathcal{U}(\hat{\mathfrak g}_-)w. \mathbf{1}$, where $b_j = C_{n/2-3j,2j,0}$ (resp. $b_j= C_{(n-3)/2-3j, 2j+1, 0}$) if $n$ is even (resp. odd), for $j=0, \dots, d(n)-1$ .

Rewriting the recurrence relation (\ref{eq:sing6}) in terms of the coefficients $b_j$, we obtain:  
\begin{equation}\label{eq:sing8}
b_{j+1} = 
\begin{cases}
 \displaystyle \frac{\binom {n/2-3j+3}{2}}{3 \binom {2j}{2}} b_j & \text{ if $n$ is even}
\bigskip \\ 
 \displaystyle \frac{\binom{(n-3)/2-3j+3}{2} }{3\binom{2j+1}{2}} b_j & \text{ if $n$ is odd},
\end{cases} 
\end{equation}
 for $j = 1,\dots,d(n)$.  Since $v_0 \neq 0$ by the definition of a singular vector, we must have $b_0 \in \mathbb{C}\backslash \{0\}$.  We may then obtain the desired formulas (\ref{eq:sing1}) and (\ref{eq:sing2}) for  coefficients $b_j$ by using (\ref{eq:sing8}) recursively for $j=1, \dots, d(n)-1$.
\end{proof}

\begin{Thm}\label{thm:sing}
Let $k = -2 + m + \frac{i}{3}$ for some $m \in \mathbb{Z}_{\ge 0}$ and $i \in \{1,2\}$, and let $n=3k+7$.  Then there exists a singular vector $v_k \in N(k,0)$, such that $v_k$ generates the maximal ideal $J(k,0)$ of $N(k,0)$ and
\begin{equation*}
v_k \equiv
\begin{cases}
\displaystyle \sum _{j=0}^{d(n)-1} b_j u^{n/2-3j	} v^{2j}.\mathbf{1} & \text{if $n$ is even} 
\medskip\\
\displaystyle \sum _{j=0}^{d(n)-1} b_j u^{(n-3)/2-3j} v^{1+2j}.\mathbf{1} &\text{if $n$ is odd,}
\end{cases} 
\end{equation*}
mod $\mathcal{U}(\mathfrak g_-)w.\mathbf{1}$, where $b_0 = 1$ and $b_j\ (j=1,\dots, d(n)-1)$ are given by formulas (\ref{eq:sing1}) and (\ref{eq:sing2}),respectively.
\end{Thm}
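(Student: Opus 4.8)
The plan is to obtain Theorem~\ref{thm:sing} by assembling Lemma~\ref{lem:exist} with Proposition~\ref{prop:sing}; the only work beyond citing these is a small bookkeeping check and one normalization. The content of the theorem is carried entirely by Lemma~\ref{lem:exist} (which supplies a singular vector generating $J(k,0)$ and records its weight) and by Proposition~\ref{prop:sing} (which pins down the shape of the coefficients), so I do not expect a genuine obstacle here.

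First I would invoke Lemma~\ref{lem:exist}: for $k = -2 + m + \tfrac{i}{3}$ with $m \in \mathbb{Z}_{\ge 0}$, $i \in \{1,2\}$, there is a singular vector $v_k \in N(k,0)$ such that the maximal submodule $J(k,0)$ is generated by $v_k$, and by property $(P1)$ this vector lies in the weight space $N(k,0)_{(k\Lambda_0 + n\theta^\vee - n\delta)}$ with $n = 3k+7$. I would note in passing that $n = 3k+7 = 3m + i + 1$ is a positive integer, in fact $n \ge 2$, so this weight space is exactly the one appearing in Proposition~\ref{prop:sing}, and the hypotheses of that proposition are met by the vector $v_k$ just produced.

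Next I would apply Proposition~\ref{prop:sing} to $v_0 = v_k$. Since $v_k$ is a singular vector lying in $N(k,0)_{(k\Lambda_0+n\theta^\vee-n\delta)}$, the proposition gives, modulo $\mathcal{U}(\hat{\mathfrak g}_-)w.\mathbf{1}$, that $v_k$ equals $\sum_{j=0}^{d(n)-1} b_j u^{n/2-3j} v^{2j}.\mathbf{1}$ when $n$ is even and $\sum_{j=0}^{d(n)-1} b_j u^{(n-3)/2-3j} v^{1+2j}.\mathbf{1}$ when $n$ is odd, with $b_0 \in \mathbb{C}\setminus\{0\}$ and the remaining $b_j$ determined by the recursions recorded in (\ref{eq:sing1}) and (\ref{eq:sing2}); here $d(n)$ is as in (\ref{eq:parts}). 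This matches the ``mod $\mathcal{U}(\mathfrak g_-)w.\mathbf{1}$'' form in the theorem, the full vector (including its $w$-terms) already being guaranteed by Lemma~\ref{lem:exist}.

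Finally I would normalize. Because $b_0 \ne 0$, we may replace $v_k$ by $b_0^{-1} v_k$; scaling by a nonzero constant changes neither the property of being a singular vector nor the submodule $J(k,0)$ that it generates. After this rescaling $b_0 = 1$, and the coefficients $b_j$ are precisely those displayed in (\ref{eq:sing1}) and (\ref{eq:sing2}), which is the assertion. The only point worth an explicit word is that the generator of $J(k,0)$ produced by Lemma~\ref{lem:exist} is indeed the vector to which Proposition~\ref{prop:sing} is being applied, and this is immediate since Lemma~\ref{lem:exist} already places $v_k$ in the required weight space via $(P1)$.
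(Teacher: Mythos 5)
Your proposal is correct and follows essentially the same route as the paper: the paper's proof likewise invokes Lemma \ref{lem:exist} for a singular vector of weight $k\Lambda_0+n\theta^{\vee}-n\delta$ generating $J(k,0)$, applies Proposition \ref{prop:sing} to it, and normalizes $b_0=1$. Your extra remarks (that $n=3m+i+1\ge 2$ and that rescaling affects neither singularity nor the submodule generated) are harmless bookkeeping consistent with the paper's argument.
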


\begin{proof}
The existence of a singular vector $v_k \in N(k,0)$, with the properties that $wt(v_k) = k\Lambda_0 +n\theta^{\vee} - n\delta$ and $v_k$ generates the maximal ideal $J(k,0)$, follows from Lemma \ref{lem:exist}.  
Then $v_k$ satisfies the conditions of Propostion \ref{prop:sing}, and the theorem follows by setting $b_0=1$ in formulas (\ref{eq:sing1}) and (\ref{eq:sing2}), respectively.
\end{proof}

Now we consider the image of the singular vector $v_k$ under Zhu's map \[ [\cdot ]: N(k,0)\rightarrow A(N(k,0)). \]  In what follows, we will identify $N(k,0)$ with $\mathcal{U}(\hat{\frak{g}}_-)$ via the map $\phi$ from Section 1.  Similarly, we identify $A(N(k,0))$ with $\mathcal{U}(\mathfrak g)$ using the map $F$, given in (\ref{eq:isomF}).  As in \cite{AL}, we then have an induced map $[\cdot] :\mathcal{U}(\hat{\frak{g}}_-) \rightarrow \mathcal{U}(\frak{g})$.  We compute:
\begin{equation*}
[a] = E_{21}, \quad
[b] = E_{31} E_{11} - E_{32} E_{10} ,\quad
[c] =E^2_{31}E_{01} - E_{32}E_{31}H_{01} - E^2_{32}F_{01}.
\end{equation*}
We also have:
\begin{equation}\label{eq:zhu0}
[u] = \tfrac 1 3 [a]^2 - [b], \qquad [v] = \tfrac 2 9 [a]^3 -[a][b] - 3[c].
\end{equation}

Since $[w]=0$, we have the following from Proposition \ref{prop:sing}:
\begin{equation}\label{eq:zhu1}
[v_k] = \quad \sum _{i=0}^{d(n)-1} b_i [u]^{n/2-3i	} [v]^{2i},
\end{equation}
if $n$ is even, and 
\begin{equation}\label{eq:zhu2}
[v_k] = \quad \sum _{i=0}^{d(n)-1} b_i [u]^{(n-3)/2-3i} [v]^{1+2i},
\end{equation}
if $n$ is odd, where $n = 3k+7$ as before and the coefficients $b_i$ are given in (\ref{eq:sing1}) and (\ref{eq:sing2}), respectively.

The following theorem is now a consequence of Proposition 1.6 of \cite{AL} and Theorem \ref{thm:sing}.

\begin{Thm} \label{thm:zhu:image}
Let $k$ be an admissible one-third integer level, and let $v_k \in N(k,0)$ be a singular vector of the form given in Theorem \ref{thm:sing}.  Then the associative algebra $A(L(k,0))$ is isomorphic to $\mathcal{U}(\frak{g})/I_{k}$,  where $ I_{k}$ is the two-sided ideal of $\mathcal{U}(\frak{g})$ generated by $[v_k]$.
\end{Thm}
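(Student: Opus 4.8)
The statement we want to prove, Theorem \ref{thm:zhu:image}, asserts that $A(L(k,0)) \cong \mathcal{U}(\mathfrak g)/I_k$ where $I_k$ is the two-sided ideal generated by $[v_k]$. The plan is to combine the two pieces already in hand: (i) the isomorphism $F: A(N(k,0)) \xrightarrow{\sim} \mathcal{U}(\mathfrak g)$ from Proposition 1.5 of \cite{AL}, which identifies the Zhu algebra of the generalized Verma module with the enveloping algebra of the finite-dimensional subalgebra; and (ii) the general fact (this is what Proposition 1.6 of \cite{AL} should supply) that if $J$ is the maximal submodule of $N(k,0)$ generated by a singular vector $v_k$, then the image $[J]$ of $J$ under Zhu's map $[\cdot]: N(k,0) \to A(N(k,0))$ is precisely the two-sided ideal of $A(N(k,0))$ generated by the class $[v_k]$, and moreover $A(L(k,0)) = A(N(k,0)/J) \cong A(N(k,0))/[J]$.

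Concretely, the steps I would carry out are as follows. First, recall that $L(k,0) = N(k,0)/J(k,0)$ and that, by general Zhu theory, for any vertex operator algebra $V$ with an ideal $W$ one has $A(V/W) \cong A(V)/[W]$ where $[W]$ denotes the image of $W$ in $A(V)$; apply this with $V = N(k,0)$ and $W = J(k,0)$. Second, invoke Theorem \ref{thm:sing}: the maximal ideal $J(k,0)$ is generated as a $\hat{\mathfrak g}$-module by the single singular vector $v_k$, so $J(k,0) = \mathcal{U}(\hat{\mathfrak g}).v_k$. Third, use Proposition 1.6 of \cite{AL} (the structural statement about images of submodules generated by singular vectors under Zhu's map) to conclude that $[J(k,0)]$ equals the two-sided ideal of $A(N(k,0))$ generated by $[v_k]$. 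Fourth, transport everything across the isomorphism $F$ of \eqref{eq:isomF}: since $F$ is an algebra isomorphism $A(N(k,0)) \xrightarrow{\sim} \mathcal{U}(\mathfrak g)$, it carries the two-sided ideal generated by $[v_k]$ to the two-sided ideal $I_k$ of $\mathcal{U}(\mathfrak g)$ generated by $F([v_k])$, which by our identification is just $[v_k]$ viewed inside $\mathcal{U}(\mathfrak g)$ (as computed in \eqref{eq:zhu1}, \eqref{eq:zhu2}). Hence $A(L(k,0)) \cong A(N(k,0))/[J(k,0)] \cong \mathcal{U}(\mathfrak g)/I_k$, as claimed.

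The main obstacle — and really the only nontrivial input — is Step three: the identification $[J(k,0)] = \la [v_k] \ra$ as a two-sided ideal. A priori $[\mathcal{U}(\hat{\mathfrak g}).v_k]$ is merely the linear span of classes $[x.v_k]$ with $x \in \mathcal{U}(\hat{\mathfrak g})$, and one must check that under Zhu's multiplication this linear span coincides with $A(N(k,0))[v_k]A(N(k,0))$. This is exactly the content of Proposition 1.6 of \cite{AL}, whose proof rests on the compatibility of Zhu's product with the module structure (the state-field correspondence moves the $\hat{\mathfrak g}$-action into left and right multiplication in $A(V)$, up to lower-degree corrections that are controlled because $[v_k]$ lies in the image under $F$ of $\mathcal{U}(\mathfrak g)$). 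Since this is cited as an already-established result, the proof of Theorem \ref{thm:zhu:image} itself is then a short bookkeeping argument stringing together Theorem \ref{thm:sing}, Proposition 1.5, and Proposition 1.6 of \cite{AL}; I would present it in essentially the four-step form above, being careful only to note that $F$ is an isomorphism of associative algebras so that it respects the formation of two-sided ideals.
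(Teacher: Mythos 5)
Your argument is correct and is essentially the paper's own proof: the paper deduces Theorem \ref{thm:zhu:image} directly from Proposition 1.6 of \cite{AL} together with Theorem \ref{thm:sing}, which is exactly the chain you spell out (quotient formula $A(N(k,0)/J)\cong A(N(k,0))/[J]$, $J(k,0)$ generated by $v_k$, image of $J(k,0)$ equal to the two-sided ideal generated by $[v_k]$, transported through the isomorphism $F$). Your write-up merely makes the bookkeeping explicit that the paper leaves to the cited proposition.
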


\bigskip


 \section{Irreducible modules}

Denote by $_L$ the adjoint action of $\mathcal{U}(\frak{g})$ on itself defined by $X_Lf = { [ X, f]}$ for $ X \in \frak{g}$ and $f \in \mathcal{U}(\frak{g})$.  Let $R(k)$ be the $\mathcal{U}(\frak{g})$-submodule of $\mathcal{U}(\frak{g})$ generated by $[v_{k}]$, where $[v_k]$ is given in (\ref{eq:zhu1}) and (\ref{eq:zhu2}), respectively.  Then $R(k)$ is an irreducible finite-dimensional $\mathcal{U}(\frak{g})$-module isomorphic to $V(n\theta^{\vee})$, where $n=3k+7$.  Let $R(k)_0$ be the zero-weight subspace of $R(k)$.
 
  \begin{Prop} \cite{A1,AM}
 Let $V(\mu)$ be an irreducible highest weight $\mathcal{U}(\mathfrak{g})$-module with highest weight vector $v_\mu$ for $\mu \in \mathfrak{h}^*$.  Then the following statements are equivalent:
\begin{enumerate}
\item  $V(\mu)$ is an $A(L(k, 0))$-module,
\item   $R(k) \cdot V(\mu) = 0$,
\item   $R(k)_0 \cdot v_\mu = 0$. 
\end{enumerate}
 \end{Prop}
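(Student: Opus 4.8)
The plan is to establish the chain of implications $(1)\Rightarrow(2)\Rightarrow(3)\Rightarrow(1)$, using the characterization of $A(L(k,0))$ from Theorem~\ref{thm:zhu:image} as $\mathcal{U}(\mathfrak g)/I_k$, where $I_k$ is the two-sided ideal generated by $[v_k]$, together with the fact that $R(k)\cong V(n\theta^{\vee})$ is the irreducible $\mathcal{U}(\mathfrak g)$-submodule (under the adjoint action ${}_L$) generated by $[v_k]$.

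For $(1)\Rightarrow(2)$: if $V(\mu)$ is an $A(L(k,0))$-module, then the $\mathcal{U}(\mathfrak g)$-action on $V(\mu)$ factors through $\mathcal{U}(\mathfrak g)/I_k$, so $I_k\cdot V(\mu)=0$; since $R(k)\subseteq I_k$ (the adjoint submodule generated by $[v_k]$ is contained in the two-sided ideal generated by $[v_k]$), we get $R(k)\cdot V(\mu)=0$. For $(2)\Rightarrow(1)$: conversely, if $R(k)\cdot V(\mu)=0$, then in particular $[v_k]\cdot V(\mu)=0$, and since $V(\mu)$ is already a $\mathcal{U}(\mathfrak g)$-module one checks the ideal $I_k=\mathcal{U}(\mathfrak g)[v_k]\mathcal{U}(\mathfrak g)$ annihilates $V(\mu)$: indeed $I_k\cdot V(\mu)=\mathcal{U}(\mathfrak g)[v_k]\mathcal{U}(\mathfrak g)\cdot V(\mu)=\mathcal{U}(\mathfrak g)[v_k]\cdot V(\mu)=0$. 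Hence $V(\mu)$ descends to an $A(L(k,0))$-module. So $(1)\Leftrightarrow(2)$ is essentially formal once Theorem~\ref{thm:zhu:image} is in hand; the real content is $(2)\Leftrightarrow(3)$.

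The implication $(2)\Rightarrow(3)$ is trivial since $R(k)_0\cdot v_\mu$ is a subset of $R(k)\cdot V(\mu)$. The substantive direction is $(3)\Rightarrow(2)$. The key idea is a weight/highest-weight argument: $R(k)\cdot V(\mu)$ is a $\mathcal{U}(\mathfrak g)$-submodule of $V(\mu)$ (because $R(k)$ is ${}_L$-invariant, so for $X\in\mathfrak g$, $r\in R(k)$, $u\in V(\mu)$ one has $X\cdot(r\cdot u)=(X_L r)\cdot u + r\cdot(X\cdot u)\in R(k)\cdot V(\mu)$). Since $V(\mu)$ is irreducible, $R(k)\cdot V(\mu)$ is either $0$ or all of $V(\mu)$. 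To rule out the latter under hypothesis $(3)$, I would show that $R(k)\cdot v_\mu=0$: writing the highest weight vector $v_\mu$ and decomposing $R(k)=\bigoplus_{\nu} R(k)_\nu$ into weight spaces, an element $r\in R(k)_\nu$ sends $v_\mu$ to weight $\mu+\nu$. Using that $\mathfrak n_+\cdot v_\mu=0$, one reduces to the top of the module: by an $\mathfrak{sl}_2$-type argument applied along root strings (or by observing that $R(k)\cong V(n\theta^{\vee})$ is generated as a $\mathcal{U}(\mathfrak n_-)$-module from its highest weight space, and conversely every vector of $R(k)$ is obtained from the zero-weight space by applying raising and lowering operators in a controlled way), the condition $R(k)_0\cdot v_\mu=0$ propagates to $R(k)_\nu\cdot v_\mu=0$ for all $\nu$. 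Concretely, for $r\in R(k)_\nu$ with $\nu\neq 0$ one writes $r$ in terms of the adjoint action of $\mathfrak n_{\pm}$ on zero-weight elements and commutes the $\mathfrak n_+$-parts past $v_\mu$, where they act as zero; the $\mathfrak n_-$-parts then act within $R(k)\cdot v_\mu$, and a downward induction on the height of $\mu-(\mu+\nu)$ closes the argument. Hence $R(k)\cdot v_\mu=0$, so the submodule $R(k)\cdot V(\mu)$ is proper, hence zero, giving $(2)$.

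The main obstacle I anticipate is the $(3)\Rightarrow(2)$ step, specifically making rigorous the propagation from the zero-weight space to arbitrary weight spaces. The cleanest route is to exploit the explicit structure: $R(k)=V(n\theta^{\vee})$, a module all of whose weights are in the root lattice (they are multiples-of-roots combinations summing to $n\theta^{\vee}$ minus positive roots), so every nonzero weight $\nu$ of $R(k)$ can be connected to the zero weight by a sequence of simple-root steps, and at each step one uses the commutation $[\mathfrak e_i, \cdot]$ or $[\mathfrak f_i,\cdot]$ together with $\mathfrak e_i\cdot v_\mu=0$ to trade an action on $v_\mu$ for an action of a lower-weight element of $R(k)$ on $v_\mu$. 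This is exactly the argument in \cite{A1,AM} for the $A_1$ and $C_\ell$ cases, and since the relevant abstract input — that $R(k)$ is a finite-dimensional irreducible $\mathcal{U}(\mathfrak g)$-module under the adjoint action — is identical here, the same proof goes through verbatim; I would cite those papers and indicate that the argument is unchanged.
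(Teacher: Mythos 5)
The paper itself gives no argument for this Proposition: it is imported verbatim from \cite{A1,AM} (see also \cite{P}), so your ultimate decision to cite those papers is consistent with what the author does. Your handling of $(1)\Leftrightarrow(2)$ via Theorem \ref{thm:zhu:image} and of the trivial implication $(2)\Rightarrow(3)$ is correct. The problem is the sketch you give of $(3)\Rightarrow(2)$, which as written has two gaps. First, the inference ``$R(k)\cdot v_\mu=0$, so the submodule $R(k)\cdot V(\mu)$ is proper, hence zero'' is a non sequitur: knowing that $R(k)$ kills the single vector $v_\mu$ does not by itself exclude $R(k)\cdot V(\mu)=V(\mu)$. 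What one actually uses is the ad-invariance identity $rX=Xr-(X_{L}r)$, which gives $R(k)\,\mathcal{U}(\mathfrak g)\subseteq \mathcal{U}(\mathfrak g)\,R(k)$ and hence $R(k)\cdot V(\mu)=R(k)\,\mathcal{U}(\mathfrak g)\,v_\mu\subseteq \mathcal{U}(\mathfrak g)\,R(k)\,v_\mu$; equivalently, $\{v\in V(\mu)\ :\ R(k)\cdot v=0\}$ is a $\mathfrak g$-submodule, so if it contains $v_\mu$ it is all of $V(\mu)$.

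Second, and more seriously, the mechanism you describe for propagating $R(k)_0\cdot v_\mu=0$ to all weight components does not work: if you write $r\in R(k)_{-\eta}$ ($\eta>0$) as a combination of $(f_i)_{L}r'$ with $r'$ of higher weight, then $r\cdot v_\mu=f_i\,(r'\cdot v_\mu)-r'\cdot(f_i v_\mu)$, and the second term has $r'$ acting on $f_i v_\mu\neq v_\mu$, which no inductive hypothesis about vectors applied to $v_\mu$ controls; the $\mathfrak n_-$-parts do not simply ``act within $R(k)\cdot v_\mu$''. The standard argument runs in the opposite direction. Note first that $R(k)_\nu\cdot v_\mu=0$ automatically whenever $\nu\not\le 0$, since $\mu+\nu$ is then not a weight of $V(\mu)$. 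Then induct on the height of $\eta\ge 0$: for $r\in R(k)_{-\eta}$ and each simple root $\alpha_i$ one has $e_i\cdot(r\cdot v_\mu)=((e_i)_{L}r)\cdot v_\mu + r\cdot(e_i v_\mu)=((e_i)_{L}r)\cdot v_\mu$, which vanishes either by the inductive hypothesis (if $-\eta+\alpha_i\le 0$) or automatically (if $-\eta+\alpha_i\not\le 0$). Hence $r\cdot v_\mu$ is annihilated by $\mathfrak n_+$ and has weight $\mu-\eta<\mu$; since the irreducible highest weight module $V(\mu)$ contains no nonzero singular vector of weight strictly below $\mu$, it follows that $r\cdot v_\mu=0$. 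This final appeal to irreducibility of $V(\mu)$ is the ingredient missing from your sketch; with it, and the ad-invariance step above, the proof closes exactly as in \cite{A1,AM}.
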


 Let $r\in R(k)_0$.  Then there exists a unique polynomial $p_r \in \mathcal{S}(\frak{h})$, where $\mathcal{S}(\frak{h})$ is the symmetric algebra of $\frak h$,  such that \[r \cdot v_\mu = p_r(\mu) v_\mu. \]  Set $\mathcal{P}(k)_0 = \{p_r \, | \, r \in \mathcal {R}(k)_0 \}$.  We thus have:
 
 \begin{Cor}  \label{cor:biject} There is a bijective correspondence between \begin{enumerate} \item the set of irreducible $A(L(k, 0) )$-modules $V(\mu)$ from the category $\mathcal{O}$, and \item the set of weights $\mu \in \frak{h}^*$ such that $p(\mu)=0$ for all $p \in \mathcal{P}(k)_0$. \end{enumerate}
 \end{Cor}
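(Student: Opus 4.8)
The plan is to obtain the corollary as a direct consequence of the Proposition above together with the defining property of the polynomials $p_r$, using only the standard description of the irreducible objects of category $\mathcal{O}$ for the finite-dimensional simple Lie algebra $\mathfrak{g}$. Recall that, up to isomorphism, these irreducible objects are precisely the irreducible highest weight modules $V(\mu)$ with $\mu \in \mathfrak{h}^*$, and that $V(\mu) \cong V(\mu')$ if and only if $\mu = \mu'$; thus $V(\mu) \mapsto \mu$ is already a bijection from isomorphism classes of irreducible category-$\mathcal{O}$ $\mathfrak{g}$-modules onto $\mathfrak{h}^*$. By Theorem \ref{thm:zhu:image}, a $\mathfrak{g}$-module is an $A(L(k,0))$-module exactly when it is annihilated by $I_k$, so the content of the corollary is to single out the subset of $\mathfrak{h}^*$ indexing those $V(\mu)$ which have this property.

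First I would invoke the Proposition: $V(\mu)$ is an $A(L(k,0))$-module if and only if $R(k)_0 \cdot v_\mu = 0$. To rephrase this condition in terms of $\mathcal{P}(k)_0$, note that each $r \in R(k)_0$ has zero weight for the adjoint action, so $r \cdot v_\mu$ lies in the $\mu$-weight space of $V(\mu)$; since $V(\mu)$ is a highest weight module this space is one-dimensional, spanned by $v_\mu$, and hence $r \cdot v_\mu = p_r(\mu)\, v_\mu$ by the very definition of $p_r$. As $v_\mu \neq 0$, we get $r \cdot v_\mu = 0$ if and only if $p_r(\mu) = 0$. Therefore $R(k)_0 \cdot v_\mu = 0$ if and only if $p(\mu) = 0$ for every $p \in \mathcal{P}(k)_0$.

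Combining the two observations, $V(\mu)$ is an irreducible $A(L(k,0))$-module from category $\mathcal{O}$ precisely when $\mu$ is a common zero of all polynomials in $\mathcal{P}(k)_0$; restricting the bijection $V(\mu) \mapsto \mu$ to this zero locus proves the claim. I do not anticipate a genuine obstacle here, since the substantive input --- the equivalences in the Proposition and the existence and uniqueness of $p_r$ --- is already available; the only care needed is the purely formal bookkeeping that the correspondence is between isomorphism classes, which rests on $V(\mu) \cong V(\mu') \Rightarrow \mu = \mu'$. It is worth recording, for use in the finiteness argument that follows, that $R(k) \cong V(n\theta^{\vee})$ is finite-dimensional, so $R(k)_0$ is finite-dimensional and $\mathcal{P}(k)_0$ spans a finite-dimensional subspace of $\mathcal{S}(\mathfrak{h})$; this is not needed for the bijection itself but is what makes the common zero locus tractable later on.
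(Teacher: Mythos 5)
Your proposal is correct and follows essentially the same route as the paper, which deduces the corollary immediately from the quoted Proposition together with the defining relation $r\cdot v_\mu = p_r(\mu)v_\mu$ and the parametrization of irreducible category-$\mathcal{O}$ modules by their highest weights. The extra bookkeeping you supply (one-dimensionality of the $\mu$-weight space, injectivity of $V(\mu)\mapsto\mu$) is exactly what the paper leaves implicit.
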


In the next Lemma, we make use of the notation and results from Appendix B.  In particular, recall the PBW gradation: $\mathcal{S}(\mathfrak h) = \bigoplus_{j=0}^{\infty} \mathcal{S}(\mathfrak h)^j $.  

\begin{Lem}\label{lem:polynomials}
  Suppose $k=-2+m+\tfrac{i}{3}$ is an admissible one-third integer level and $n=3k+7$.
Let $p_1(H), p_2(H)$ be the unique polynomials in $\mathcal{P}(k)_0$ which satisfy
\begin{align*}
& p_1(H)  \equiv \ \frac{1}{(n!)^2} (E_{10}^n F_{31}^n)_L [v_k],\\
& p_2(H)  \equiv \ \frac{1}{(n!)^2} (E_{11}^n F_{32}^n)_L [v_k] \quad (\mbox{mod }\mathcal{U}(\mathfrak{g})\mathfrak{n}_+),
\end{align*}
respectively.  Then we have:
\begin{enumerate}
\item  $p_1(H) = C_1 \cdot H_{10}(H_{10}-1)\cdots (H_{10}-n+1)$ 
\item $p_2(H) \equiv C_2 \cdot H_{11}(H_{11}-1)\cdots (H_{11}-n+1),\quad \text{ mod } \bigoplus_{j=0}^{n-1}\mathcal{S}(\mathfrak h)^{j}$
\end{enumerate}
for some constants $C_1,C_2 \in \mathbb{C}\backslash \{0\}$. 
\end{Lem}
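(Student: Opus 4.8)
The plan is to start from the explicit form of $[v_k]$ provided by Theorem~\ref{thm:sing}, namely $[v_k]\equiv\sum_{j=0}^{d(n)-1}b_j\,[u]^{A-3j}[v]^{B+2j}$ modulo $\mathcal{U}(\mathfrak g)w$, where $(A,B)=(n/2,0)$ for $n$ even and $((n-3)/2,1)$ for $n$ odd, and $[u],[v]$ are the elements of~(\ref{eq:zhu0}), and to compute the action of $\tfrac1{(n!)^2}(E_{10}^nF_{31}^n)_L$ and $\tfrac1{(n!)^2}(E_{11}^nF_{32}^n)_L$ on it term by term, reducing the outcome modulo $\mathcal{U}(\mathfrak g)\mathfrak n_+$. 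First I would record that $[v_k]$ is the highest weight vector of $R(k)\cong V(n\theta^\vee)$, hence is annihilated by the adjoint action of $\mathfrak n_+$; in particular both $(E_{10}^nF_{31}^n)_L[v_k]$ and $(E_{11}^nF_{32}^n)_L[v_k]$ have weight $0$, so they lie in $R(k)_0$ and represent, modulo $\mathcal{U}(\mathfrak g)\mathfrak n_+$, well-defined elements of $\mathcal{S}(\mathfrak h)$ — these are precisely the polynomials $p_1,p_2$. The reason for this particular pair of operators is that $F_{31}^n$ followed by $E_{10}^n$ (and $F_{32}^n$ followed by $E_{11}^n$) has the same net weight $-n\theta^\vee$ as a plain $n$-fold lowering, but makes a detour that renders the resulting polynomial sensitive to $H_{10}$ (respectively $H_{11}$), and the two together force finiteness of the vanishing set in Corollary~\ref{cor:biject}.

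The heart of the computation is a Leibniz-rule argument combined with the $\mathfrak{sl}_2$-triple $(E_{31},H_{31},F_{31})$ (respectively $(E_{32},H_{32},F_{32})$). One checks that $[u]$ and $[v]$ are themselves $\mathfrak g$-highest weight vectors, of $H_{31}$-weights (and $H_{32}$-weights) equal to $2$ and $3$ respectively; hence in the multinomial expansion of $\mathrm{ad}(F_{31})^n\bigl([u]^{A-3j}[v]^{B+2j}\bigr)$ each factor of $[u]$ absorbs at most two and each factor of $[v]$ at most three copies of $\mathrm{ad}(F_{31})$, and since $2(A-3j)+3(B+2j)=n$ for every $j$ the expansion collapses to a single, fully saturated term $\binom{n}{2,\dots,2,3,\dots,3}\bigl(\mathrm{ad}(F_{31})^2[u]\bigr)^{A-3j}\bigl(\mathrm{ad}(F_{31})^3[v]\bigr)^{B+2j}$, and similarly for $\mathrm{ad}(F_{32})^n$. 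One then applies $\mathrm{ad}(E_{10})^n$ (respectively $\mathrm{ad}(E_{11})^n$) to this element and reduces modulo $\mathcal{U}(\mathfrak g)\mathfrak n_+$; the closed forms for these saturated actions on products of the generators $E_{21},E_{31},E_{32},H_{01},F_{01}$ occurring in $[u],[v]$ are exactly the content of the lemmas of Appendix~B. Feeding them in expresses $\tfrac1{(n!)^2}(E_{10}^nF_{31}^n)_L\bigl([u]^{A-3j}[v]^{B+2j}\bigr)$ modulo $\mathcal{U}(\mathfrak g)\mathfrak n_+$ as an explicit polynomial in $H_{10}$, and the analogous quantity with $E_{11},F_{32}$ as a polynomial in $H_{11}$ up to terms of lower degree in $\mathcal{S}(\mathfrak h)$. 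Summing over $j$ with the coefficients $b_j$ of~(\ref{eq:sing1})/(\ref{eq:sing2}) and invoking a binomial identity, the weighted sum collapses: for $p_1$ it gives exactly $C_1\,H_{10}(H_{10}-1)\cdots(H_{10}-n+1)$, and for $p_2$ — where it suffices to track the top-degree part — it gives $C_2\,H_{11}^n\equiv C_2\,H_{11}(H_{11}-1)\cdots(H_{11}-n+1)$ modulo $\bigoplus_{j=0}^{n-1}\mathcal{S}(\mathfrak h)^j$. Nonvanishing of $C_1$ and $C_2$ follows from the $j=0$ summand, whose saturated term is visibly nonzero and whose contribution is not cancelled (it already realizes the maximal possible degree $n$).

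I expect the main obstacle to be the bookkeeping inside this saturated Leibniz computation. Since $\mathrm{ad}(F_{31})$ does not annihilate the individual root vectors appearing in $[u]$ and $[v]$ — for instance $[F_{31},E_{21}]$ is a multiple of $F_{10}$, $[F_{31},E_{31}]$ a multiple of $H_{31}$, and $[F_{31},E_{32}]$ a multiple of $E_{01}$ — the intermediate expressions involve many root vectors and several Cartan elements, and one must verify that after applying $\mathrm{ad}(E_{10})^n$ and passing to $\mathcal{S}(\mathfrak h)$ all contributions involving Cartan generators other than $H_{10}$ (notably $H_{31}$) cancel. It is exactly this cancellation, for which the precise values of the $b_j$ (themselves forced by the condition $F_{32}(1).v_0=0$) are essential, that upgrades the statement for $p_1$ from a leading-term assertion to the exact factorization; the Appendix~B lemmas are designed to isolate the surviving terms, so the proof of the present lemma should reduce to substituting them and performing the summation. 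An alternative, more conceptual route to the location of the roots of $p_1$ would be to argue representation-theoretically that $\tfrac1{(n!)^2}(E_{10}^nF_{31}^n)_L[v_k]$ acts by $0$ on the highest weight vector of $V(\mu)$ whenever $\langle\mu,\alpha^\vee\rangle\in\{0,1,\dots,n-1\}$ — because the $\mathfrak{sl}_2^{(\alpha)}$-string through $v_\mu$ is then too short to support the $n$-fold raising by $E_{10}$ — which, together with the degree $n$ and the nonzero leading coefficient established above, pins down $p_1$ uniquely; but determining the full polynomial $p_2$ beyond its leading term still seems to require the explicit Appendix~B computation, which is why only the leading term is claimed there.
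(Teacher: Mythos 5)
Your skeleton---start from the explicit form of $[v_k]$ given by Theorem \ref{thm:sing}, use the saturation $\mathrm{ad}(F_{31})^3[u]=\mathrm{ad}(F_{31})^4[v]=0$ so that only the fully saturated multinomial term survives, then feed in the Appendix computations and track degrees---is essentially the strategy of the paper, and those preliminary observations are correct. The gap is at your central step. You assert that the sum over $j$ collapses to $C_1\,H_{10}(H_{10}-1)\cdots(H_{10}-n+1)$ only after ``invoking a binomial identity,'' and you claim explicitly that the precise values of the $b_j$ are essential for upgrading a leading-term statement to the exact factorization of $p_1$. No such identity is supplied, and the claim is in fact false: the actual mechanism (Lemma \ref{lem:app:b}(1), via Lemma \ref{lem:app:b2} and Lemma \ref{lem:app:b3}) is term-by-term. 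Indeed $\tfrac{1}{2!}(F_{31}^2)_L[b]$ and $\tfrac{1}{3!}(F_{31}^3)_L[c]$ already lie in $\mathcal{U}(\mathfrak g)\mathfrak n_+$, a subspace stable under $\mathrm{ad}(E_{10})$, so after applying $(E_{10}^n)_L$ and $\pi_0$ every monomial $[a]^p[b]^q[c]^r$ with $q+r>0$ contributes exactly zero to $p_1$; consequently each $j$-summand of your expansion is individually a scalar multiple of the falling factorial, for arbitrary coefficients. The ``cancellation of Cartan generators other than $H_{10}$'' you identify as the main obstacle never has to be engineered by the $b_j$: those terms sit in $\mathcal{U}(\mathfrak g)\mathfrak n_+$ and die under $\pi_0$. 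As written, the exact formula for $p_1$ (and the top-degree statement for $p_2$) is asserted rather than proved.

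The second, related gap is the nonvanishing of $C_1$ and $C_2$. Your justification---the $j=0$ summand ``is not cancelled (it already realizes the maximal possible degree $n$)''---does not work: every $j$-summand contributes at degree $n$ in $H_{10}$ (resp.\ $H_{11}$), so attaining top degree in no way precludes cancellation of the leading coefficient, and under your own picture, in which $b_j$-dependent cancellations are pervasive, this cannot be waved away. What actually makes the constant nonzero is that $C_1=C_2$ is the coefficient $C_{n,0,0}$ of $[a]^n$ in the expansion of $[v_k]$ via (\ref{eq:zhu0}): each monomial $[u]^p[v]^q$ contributes $(1/3)^p(2/9)^q>0$ to it, and the ratios $b_j/b_0$ in (\ref{eq:sing1}) and (\ref{eq:sing2}) are positive, so all contributions have one sign. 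Some argument of this kind (same sign, or the term-by-term vanishing above) must replace your degree count; your alternative $\mathfrak{sl}_2$-string remark only locates roots of $p_1$, presupposes the very facts in question (degree $n$, nonzero leading coefficient), and, as you concede, does not treat $p_2$.
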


\begin{proof}
First notice from (\ref{eq:zhu0}), (\ref{eq:zhu1}), and (\ref{eq:zhu2}) that $[v_k]$ is a sum of the form: \[[v_k]\ =\ \sum_{(p,q,r) \in (\mathbb{Z}_{\ge 0})^3 }C_{p,q,r}\ [a]^p [b]^q [c]^r\ =\ C_{n,0,0}\ [a]^n +  \sum_{ (p,q,r) \neq (n,0,0) }C_{p,q,r}\ [a]^p [b]^q [c]^r, \] for some constants $C_{p,q,r} \in \mathbb{C}$, such that $C_{n,0,0} \neq 0$ and $C_{p,q,r} = 0$ unless $p+2q+3r = n$.  

From Remark \ref{rmk:app:b}, we have
\begin{equation*}
p_1(H) = \pi_0 \left( \tfrac{1}{n!} (E_{10}^nF_{31}^n)_L[v_k] \right)
\end{equation*}
and
\begin{equation*}
p_2(H) = \pi_0 \left( \tfrac{1}{n!} (E_{11}^nF_{32}^n)_L[v_k] \right).
\end{equation*}
Then from Lemma \ref{lem:app:b3} and the fact that $\pi_0: \mathcal{U}(\mathfrak g) \rightarrow \mathcal{S}(\mathfrak h)$ is an algebra homomorphism, we have
\begin{align}
p_1(H)\quad \ =\ &\ \ C_{n,0,0} H_{10}\cdots (H_{10}-n+1) \label{eq:poly1}\\
 + \sum_{(p,q,r)\neq(n,0,0)}&   \frac{1}{((n-p)!)^2} (H_{10}-n+1) \cdots (H_{10}-n+p)\ \pi_0\left( (E_{10}^{n-p}F_{31}^{n-p})_L ([b]^q[c]^r)\right) \nonumber
\end{align}
and
\begin{align}
p_2(H)\quad \ =\ &\ C_{n,0,0} H_{11}\cdots (H_{11}-n+1) \label{eq:poly2}\\
 + \sum_{(p,q,r)\neq(n,0,0)}&  \frac{1}{((n-p)!)^2} (H_{11}-n+1) \cdots (H_{11}-n+p)\ \pi_0 \left( (E_{11}^{n-p}F_{32}^{n-p})_L ([b]^q[c]^r)\right) \nonumber
\end{align}

Then by setting $m=n-p$ in Lemma \ref{lem:app:b}, we notice that
\begin{equation*}
\pi_0\left( (E_{10}^{n-p}F_{31}^{n-p})_L ([b]^q[c]^r)\right) = 0
\end{equation*}
in (\ref{eq:poly1}), whenever $C_{p,q,r}\neq 0$ and  $(p,q,r) \neq (n,0,0)$.  Again since $\pi_0$ is an algebra homomorphism, it follows that 
\begin{equation*}
p_1(H) = C_{n,0,0} H_{10} \cdots (H_{10}-n+1).
\end{equation*}
This proves the first part of the lemma since $C_{n,0,0} \neq 0$.

Similarly, using the definition of the PBW grading on $\mathcal{S}(\mathfrak h)$, we may again apply Lemma \ref{lem:app:b} to see that in the case of $p_2(H)$ we have:
\begin{equation*}
\mathrm{deg}\ \pi_0 \left( (E_{11}^{n-p}F_{32}^{n-p})_L ([b]^q[c]^r)\right) < n-1,
\end{equation*}
whenever $C_{p,q,r}\neq 0$ and  $(p,q,r) \neq (n,0,0)$.  Thus we see that
\begin{equation*}
p_2(H) \equiv C_{n,0,0} \cdot H_{11}(H_{11}-1)\cdots (H_{11}-n+1),\quad \mbox{ mod } \bigoplus_{j=0}^{n-1}\mathcal{S}(\mathfrak h)^{j},
\end{equation*}
which completes the proof.
\end{proof}

\begin{Prop}
Let $k$ be an admissible one-third integer level.  Then there are finitely many irreducible $A(L(k,0))$-modules from the category $\mathcal{O}$.
\end{Prop}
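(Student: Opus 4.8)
The plan is to read the proposition off directly from Corollary~\ref{cor:biject} together with Lemma~\ref{lem:polynomials}, using that the Cartan subalgebra of a Lie algebra of type $G_2$ has dimension $2$. By Corollary~\ref{cor:biject}, the isomorphism classes of irreducible $A(L(k,0))$-modules from the category $\mathcal O$ are in bijection with the common zero set $Z(k) = \{\,\mu\in\mathfrak h^*\ :\ p(\mu)=0 \text{ for all } p\in\mathcal P(k)_0\,\}$, so it suffices to prove that $Z(k)$ is finite. Since $Z(k)$ is contained in the common zero locus of the two specific polynomials $p_1,p_2\in\mathcal P(k)_0$ supplied by Lemma~\ref{lem:polynomials}, it is enough to show that $p_1$ and $p_2$ have only finitely many common zeros in $\mathfrak h^*$.

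First I would set up coordinates on $\mathfrak h^*$ adapted to the two coroots occurring in Lemma~\ref{lem:polynomials}. The roots $\alpha$ and $\alpha+\beta$ of $G_2$ are not proportional, hence the coroots $H_{10}=\alpha^{\vee}$ and $H_{11}=(\alpha+\beta)^{\vee}$ form a basis of $\mathfrak h$; so $\mu\mapsto(\xi,\eta):=(\langle\mu,H_{10}\rangle,\langle\mu,H_{11}\rangle)$ identifies $\mathfrak h^*$ with $\mathbb C^2$ and $\mathcal S(\mathfrak h)$ with $\mathbb C[\xi,\eta]$, the PBW grading becoming the total degree. Under this identification, Lemma~\ref{lem:polynomials}(1) reads $p_1=C_1\,\xi(\xi-1)\cdots(\xi-n+1)$ with $C_1\neq 0$, and Lemma~\ref{lem:polynomials}(2) reads $p_2=C_2\,\eta(\eta-1)\cdots(\eta-n+1)+g$ with $C_2\neq 0$ and $\deg g< n$; in particular, viewed as a polynomial in $\eta$ alone, $p_2$ has degree exactly $n$ with nonzero leading coefficient $C_2$. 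Note that $n=3k+7=3m+i+1\ge 2$, so both $p_1$ and $p_2$ are nonconstant.

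Next I would carry out the intersection count. The vanishing locus of $p_1$ is the union of the $n$ affine lines $\xi=c$, $c\in\{0,1,\dots,n-1\}$. Restricting $p_2$ to any one of these lines gives a polynomial in $\eta$ of degree exactly $n$ (its leading coefficient is still $C_2\neq 0$), which therefore has at most $n$ roots. Hence $p_1$ and $p_2$ vanish simultaneously at no more than $n^2$ points, so $Z(k)$ is finite, and the proposition follows from Corollary~\ref{cor:biject}.

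The substantive content is already carried by Lemma~\ref{lem:polynomials}: what is needed are two members of $\mathcal P(k)_0$ whose top-degree parts involve genuinely independent linear functionals on $\mathfrak h$, so that the two plane curves they cut out share no common component; this is exactly why one extracts the two distinct ``highest-weight'' pieces of $[v_k]$ by applying $(E_{10}^nF_{31}^n)_L$ and $(E_{11}^nF_{32}^n)_L$. Given those explicit factorizations, the remaining step is the elementary B\'ezout-style count above, and I do not expect any real obstacle there.
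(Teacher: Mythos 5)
Your argument is correct and is essentially the paper's own proof: both use $p_1$ to confine $\langle\mu,H_{10}\rangle$ to the finite set $\{0,1,\dots,n-1\}$ and then observe, via Lemma \ref{lem:polynomials}(2), that on each such slice $p_2$ restricts to a nonzero degree-$n$ polynomial in the remaining coordinate, so the common zero set is finite and Corollary \ref{cor:biject} concludes. Your explicit choice of coordinates $(H_{10},H_{11})$ and the B\'ezout-style bound $n^2$ are only cosmetic refinements of the same reasoning.
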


\begin{proof}
Since $p(H)= H_{10}(H_{10}-1)\cdots(H_{10}-n+1) \in \mathcal{P}_0$, any weight $\mu$ such that $p(\mu) = 0$ must satisfy $\mu_{10} \in \{0, 1,2,\dots,n-1\}$.  By substituting any of these values for $H_{10}$ into the polynomial $p_2(H)$ we obtain a polynomial in a single variable, $H_{01}$.  From the second part of Lemma \ref{lem:polynomials}, it follows that the resulting polynomial of $H_{01}$ has degree $n$.  Hence, it is nonzero and has only a finite number solutions.  By Corollary \ref{cor:biject}, it follows then that the number of irreducible $A(N(k,0))$-modules is finite.
\end{proof}

From the theory of Zhu's associative algebra, we then have the following.

\begin{Thm}\label{thm:main}
Let $k$ be an admissible one-third integer level.  Then there are finitely many irreducible weak $L(k,0)$-modules from the category $\mathcal{O}$, for each admissible one-third integer level $k$.
\end{Thm}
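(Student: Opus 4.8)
The plan is to obtain Theorem \ref{thm:main} as an immediate consequence of the preceding Proposition together with the bijective correspondence of Zhu's $A(V)$-theory that was recalled in the diagram of the Introduction; no further computation is needed, since all of the substantive work has already been carried out in the explicit singular-vector formula (Theorem \ref{thm:sing} and Theorem \ref{thm:zhu:image}) and in Lemma \ref{lem:polynomials}.

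The first step is to invoke the Frenkel--Zhu correspondence \cite{FZ,Z}: for a vertex operator algebra $V$, the assignment sending a module to its top (lowest conformal weight) space induces a bijection between isomorphism classes of irreducible $\mathbb{Z}_+$-graded weak $V$-modules and isomorphism classes of irreducible $A(V)$-modules. Restricting this to the subcategories in the middle row of the Introduction's diagram, irreducible weak $L(k,0)$-modules that belong to the category $\mathcal{O}$ as $\hat{\mathfrak g}$-modules correspond to irreducible $A(L(k,0))$-modules whose top space lies in the category $\mathcal{O}$ as a $\mathfrak g$-module, i.e. to the modules $V(\mu)$ of Corollary \ref{cor:biject}. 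The second step is to apply the preceding Proposition, which asserts that there are only finitely many such irreducible $A(L(k,0))$-modules for an admissible one-third integer level $k$; equivalently, by Corollary \ref{cor:biject}, the common zero locus in $\mathfrak h^*$ of the polynomials in $\mathcal{P}(k)_0$ is finite (the polynomial $p_1(H)$ forces $\mu_{10} \in \{0,1,\dots,n-1\}$, and for each such value $p_2(H)$ specializes, by Lemma \ref{lem:polynomials}, to a nonzero degree-$n$ polynomial in the remaining variable, leaving only finitely many $\mu$). Combining the two steps, the finite set of irreducible $A(L(k,0))$-modules from the category $\mathcal{O}$ is matched bijectively with the set of isomorphism classes of irreducible weak $L(k,0)$-modules from the category $\mathcal{O}$, which is therefore finite; since the Proposition holds uniformly in $k$, this is true for every admissible one-third integer level $k=-2+m+\tfrac{i}{3}$.

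I do not expect a genuine obstacle at this point, as the content has been front-loaded into Sections 3 and 4. The only place deserving a word of care is making sure the correspondence is applied to the correct pair of subcategories --- category-$\mathcal{O}$ weak modules on the vertex algebra side versus category-$\mathcal{O}$ modules on the $A(V)$ side, not the ambient categories of all $\mathbb{Z}_+$-graded weak modules and of all $A(V)$-modules --- but this matching is exactly what is encoded in the Introduction's diagram and what underlies Corollary \ref{cor:biject}, so it is already in hand.
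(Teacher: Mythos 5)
Your proposal is correct and follows the same route as the paper: the paper deduces Theorem \ref{thm:main} directly from the preceding Proposition together with the category-$\mathcal{O}$ restriction of the Zhu/Frenkel--Zhu correspondence, exactly as you describe. No gaps; the extra care you note about matching the correct subcategories is precisely what the paper's diagram and Corollary \ref{cor:biject} encode.
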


\bigskip

\appendix

\section{}
In this appendix, we provide some lemmas which are necessary to prove Proposition \ref{prop:sing}.  For the purpose of simplifying computations, it will be convenient to introduce the following subspace of $N(k,0)$.  Define the $C_2$-subspace of $N(k,0)$ to be 
\begin{equation*}
C_2(N(k,0)) = \mbox{Span}_{\mathbb{C}}\{ a(-2).b\ |\ a \in \frak{g}, b\in N(k,0)\}.
\end{equation*}  
This coincides with the subspace introduced by Zhu in \cite{Z}.  

It is not difficult to check that the $C_2$-subspace satisfies the following properties.
\begin{equation}\label{eq:c2space:a1}
 \mathcal{U}(\hat{\mathfrak g}_-). C_2(N(k,0)) \subseteq C_2(N(k,0))
\end{equation}
\begin{equation}\label{eq:c2space:a2}
 w.\mathbf{1} \in C_2(N(k,0))
\end{equation}
\begin{equation}\label{eq:c2space:a3}
 f g .\mathbf{1} \equiv g f. \mathbf{1}, \mbox{ mod } C_2(N(k,0)) \quad \left(f,g \in  \mathcal{U}(\hat{\mathfrak g}_-)\right)
\end{equation}

The following lemma is also straightforward to verify.
\begin{Lem}\label{lem:c2space}
Let $f_1, \dots, f_t \in \mathcal{U}(\hat{\mathfrak g}_-)$ and $X(1) = X\otimes t \in \hat{\mathfrak g}$, for some $X \in \mathfrak g$.  Then
\begin{align*}
X(1). (f_1 \cdots f_t. \mathbf{1}) &\ \equiv\ \sum_{1 \le i \le t} f_1 \cdots \hat{f_i} \cdots f_t [X(1), f_i]. \mathbf{1}\\
& + \sum_{1\le i< j \le t} f_1 \cdots \hat{f_i} \cdots \hat{f_j} \cdots f_t \left[ [X(1), f_i], f_j\right]. \mathbf{1},
\end{align*}
mod $C_2(N(k,0))$.
\end{Lem}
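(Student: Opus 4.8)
The plan is to derive an exact commutator expansion from the vanishing $X(1).\mathbf 1=0$, and then to discard everything beyond the two displayed sums by a factor-count estimate against $C_2(N(k,0))$.

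Since $X(1)\in\hat{\mathfrak g}_+$ annihilates the vacuum, I first write $X(1).(f_1\cdots f_t.\mathbf 1)=[X(1),f_1\cdots f_t].\mathbf 1$, and recall that $\mathrm{ad}_{X(1)}$ is a derivation of $\mathcal U(\hat{\mathfrak g})$. Expanding by the Leibniz rule and then, within each resulting summand, commuting the single bracket $[X(1),f_i]$ rightward past $f_{i+1},\dots,f_t$ (again by Leibniz), I obtain the \emph{exact} identity
\begin{align*}
X(1).(f_1\cdots f_t.\mathbf 1)=&\ \sum_{1\le i\le t} f_1\cdots \widehat{f_i}\cdots f_t\,[X(1),f_i].\mathbf 1\\
&+\sum_{1\le i<j\le t} f_1\cdots \widehat{f_i}\cdots f_{j-1}\,[[X(1),f_i],f_j]\,f_{j+1}\cdots f_t.\mathbf 1 .
\end{align*}
No approximation has been used yet: the first sum is already in the desired form, while in the second sum the double bracket still occupies the $j$-th slot. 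It remains only to slide each double bracket to the far right modulo $C_2(N(k,0))$. Commuting $[[X(1),f_i],f_j]$ past $f_{j+1},\dots,f_t$ by Leibniz produces exactly the summand $f_1\cdots\widehat{f_i}\cdots\widehat{f_j}\cdots f_t\,[[X(1),f_i],f_j].\mathbf 1$ of the lemma, together with terms of the form $(\cdots)\,[[[X(1),f_i],f_j],f_l]\,(\cdots).\mathbf 1$ in which a $3$-fold iterated commutator is flanked by products of elements of $\hat{\mathfrak g}_-$.

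Thus the whole statement follows once I show that for every $r$-fold iterated commutator $C=[[\cdots[X(1),g_1],\dots],g_r]$ with $g_1,\dots,g_r$ monomials in $\mathcal U(\hat{\mathfrak g}_-)$ and $r\ge 3$ one has $C.\mathbf 1\in C_2(N(k,0))$: for a flanked term $h\,C\,h'.\mathbf 1$ with $h,h'\in\mathcal U(\hat{\mathfrak g}_-)$ I write $C\,h'.\mathbf 1=h'C.\mathbf 1+[C,h'].\mathbf 1$, where $h'C.\mathbf 1\in C_2(N(k,0))$ by \eqref{eq:c2space:a1}, while $[C,h']$ is again an iterated commutator of order $\ge 3$, and then apply \eqref{eq:c2space:a1} once more for the outer factor $h$. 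To prove the core vanishing I would use the standard identification $N(k,0)/C_2(N(k,0))\cong\mathcal S(\mathfrak g)$, under which $Z(-1).\mathbf 1\mapsto Z$: a monomial applied to $\mathbf 1$ has nonzero image only if, after normal ordering, it is a product of mode-$(-1)$ generators, so a class of conformal degree $d$ needs at least $d$ tensor factors. Two invariants of $C$ then collide. First, each bracketing in the construction of $C$ merges two tensor factors into one (or deletes one through a central term), so every monomial of $C$ has at most $S-(r-1)$ factors, where $S$ is the total number of $\hat{\mathfrak g}_-$-generators in $g_1\cdots g_r$. Second, the total mode of $C$ is $1-\sum_a|\mathrm{mode}(g_a)|\le 1-S$, so $C.\mathbf 1$ has conformal degree $\ge S-1$. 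Passing to normal form—commuting the mode-$0$ pieces introduced by $\mathrm{ad}_{X(1)}$ to the right, where they annihilate $\mathbf 1$, and reordering the surviving negative modes by \eqref{eq:c2space:a3}—never raises the factor count and preserves the degree. Hence a nonzero image would force $S-(r-1)\ge S-1$, i.e. $r\le 2$; for $r\ge 3$ the image vanishes and $C.\mathbf 1\in C_2(N(k,0))$.

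The crux, and the one place needing care, is the factor-count bookkeeping of the last step when the $f_i$ are arbitrary monomials rather than single root vectors, and in particular the correct treatment of the mode-$0$ and central contributions that $\mathrm{ad}_{X(1)}$ generates: these are precisely what holds the conformal degree at $\ge S-1$ while pushing the factor count down by one with each successive bracket. I would make this rigorous by first reducing to $f_i\in\mathcal B_{PBW}$ using multilinearity of both sides of the asserted identity, and then establishing the inequality ``factor count $\le S-(r-1)$, degree $\ge S-1$'' by induction on $r$. Once this inequality is in hand, the collapse of all commutators of order $\ge 3$—and the survival of exactly the first- and second-order terms—is forced by the structure of $N(k,0)/C_2(N(k,0))\cong\mathcal S(\mathfrak g)$.
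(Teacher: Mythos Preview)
Your argument is correct. The paper omits the proof entirely, calling the lemma ``straightforward to verify,'' so there is no detailed approach to compare against; your iterated-Leibniz expansion followed by the conformal-degree versus factor-count estimate in $N(k,0)/C_2(N(k,0))\cong\mathcal S(\mathfrak g)$ is a clean and valid way to establish that all commutators of order $\ge 3$ die modulo $C_2$, which is exactly the content needed beyond the exact second-order expansion you wrote down.

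One small remark: your reduction ``by multilinearity to $f_i\in\mathcal B_{PBW}$'' is the right move, but note that you cannot further reduce to $f_i\in\hat{\mathfrak g}_-$ (single generators) by multilinearity alone, since the two sides of the identity do not split compatibly under $f_i\mapsto f_i'f_i''$; the mode-$0$ pieces of $[X(1),f_i']$ obstruct a direct use of \eqref{eq:c2space:a3}. Your degree/factor-count argument is therefore genuinely needed for the general statement, and it handles this correctly because both invariants (total mode and factor count) are additive over the generators appearing in the monomials $g_i$, independent of how they are grouped. The recursion you sketch for the flanked terms $hCh'.\mathbf 1$ is not circular: you invoke the core vanishing for orders $r,r+1,\dots$, all of which are covered uniformly by the same inequality $M-1\ge S-1>S-(r-1)$.
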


Next recall that we have the following equations.
\begin{Lem} \label{lem:app:a1} \cite{AL}
The following hold in $\mathcal{U}(\hat{\mathfrak g} )$:
\begin{align*}  
  [u, F_{32}(1)]& =\ - \left (K+ \tfrac 5 3 \right ) E_{10}(-1) - E_{31}(-1)F_{21}(0)  - \tfrac 2 3 E_{21}(-1)F_{11}(0) \\ & \quad\  
+ E_{11}(-1)F_{01}(0) + E_{10}(-1)H_{32}(0) ,\\
 \  [v,  F_{32}(1)] &=\ - E_{32}(-1)E_{10}(-1)F_{11}(0) - 3 E_{32}(-1)F_{01}(-1) \\ & \quad\  + \tfrac 4 3 E_{31}(-2) + E_{31}(-1)E_{11}(-1)F_{11}(0) -E_{31}(-1)H_{11}(-1) \\ & \quad\  - \tfrac 2 3 a^2 F_{11}(0) - \tfrac 1 3 a E_{10}(-1) -a [b, F_{32}(1)] -3 [ c, F_{32}(1)] , \\
 [w,  F_{32}(1)] &=\  - E_{32}(-2)F_{01}(0) +E_{32}(-1)F_{01}(-1) \\ & \quad\  -  E_{31}(-2)H_{32}(0)+ E_{31}(-1)H_{32}(-1) +K E_{31}(-2).
\end{align*} 
\end{Lem}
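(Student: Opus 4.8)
All three identities are equalities in $\mathcal{U}(\hat{\mathfrak g})$, so the plan is to compute them directly from the affine commutation relations, using that the operation $[\,\cdot\,, F_{32}(1)]$ is a derivation of the associative algebra. Recall that for $X,Y \in \mathfrak g$ and $m,n \in \mathbb Z$,
\[
[X(m), Y(n)] = [X,Y](m+n) + m\,\langle X,Y\rangle\,\delta_{m+n,0}\,K,
\]
where $\langle\cdot,\cdot\rangle$ is the invariant form normalized so that long roots have squared length $2$; in particular $\langle E_{32}, F_{32}\rangle = 1$, and this single nonzero pairing is the only source of central terms. Since we bracket against $F_{32}(1)$, the only finite brackets that can occur are $[E_{ij}, F_{32}]$, which is nonzero precisely when $(i-3)\alpha + (j-2)\beta$ is a root or zero: in the Chevalley basis of \cite{AL} one has $[E_{32}, F_{32}] = H_{32}$, $[E_{31}, F_{32}] = -F_{01}$, $[E_{21}, F_{32}] = -F_{11}$, $[E_{11}, F_{32}] \in \mathbb C F_{21}$, $[E_{01}, F_{32}] \in \mathbb C F_{31}$, while $[E_{10}, F_{32}] = 0$ because $2\alpha + 2\beta$ is not a root. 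Combined with the mode shift, these give the elementary brackets $[E_{ij}(-1), F_{32}(1)]$ and $[E_{ij}(-2), F_{32}(1)]$ that feed every computation; note that $[E_{32}(-1), F_{32}(1)] = H_{32}(0) - K$ carries the central term, whereas $[E_{32}(-2), F_{32}(1)] = H_{32}(-1)$ does not, since $-2+1 \ne 0$.

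First I would dispose of $[u, F_{32}(1)]$ and $[w, F_{32}(1)]$ by full expansion. For $u = \tfrac13 a^2 - b$ I would write $[a^2, F_{32}(1)] = a[a,F_{32}(1)] + [a,F_{32}(1)]a$ with $[a, F_{32}(1)] = -F_{11}(0)$, and expand $[b, F_{32}(1)]$ termwise over $b = E_{31}(-1)E_{11}(-1) - E_{32}(-1)E_{10}(-1)$; the factor $E_{32}(-1)$ in $b$ produces, via $[E_{32}(-1), F_{32}(1)] = H_{32}(0) - K$, both the Cartan term $E_{10}(-1)H_{32}(0)$ and the central contribution $-K\,E_{10}(-1)$ of the statement. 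After commuting the degree-zero lowering operators into PBW position and collecting, one obtains the five stated terms. The identity for $w = E_{31}(-1)E_{32}(-2) - E_{32}(-1)E_{31}(-2)$ is analogous and shorter: the four elementary brackets above, followed by PBW reordering (which moves $F_{01}(0)$ past $E_{32}(-2)$ and $H_{32}(0)$ past $E_{31}(-2)$), yield the right-hand side, with the term $+K\,E_{31}(-2)$ coming exactly from the central part of $[E_{32}(-1), F_{32}(1)]$.

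For $[v, F_{32}(1)]$ I would use $v = \tfrac29 a^3 - ab - 3c$ and the derivation property to write
\[
[v, F_{32}(1)] = \tfrac29[a^3, F_{32}(1)] - [a, F_{32}(1)]\,b - a[b, F_{32}(1)] - 3[c, F_{32}(1)],
\]
and then expand only the first two summands, leaving $-a[b, F_{32}(1)]$ and $-3[c, F_{32}(1)]$ intact; these are precisely the last two terms recorded in the statement. Since $[a, F_{32}(1)] = -F_{11}(0)$, the cross-term $-[a, F_{32}(1)]\,b = F_{11}(0)\,b$ reduces, after moving $F_{11}(0)$ rightward through $b$ (using $[F_{11}, E_{31}] = 0$, $[F_{11}, E_{11}] = -H_{11}$, $[F_{11}, E_{32}] \in \mathbb C E_{21}$, $[F_{11}, E_{10}] \in \mathbb C F_{01}$), to the terms $E_{31}(-1)E_{11}(-1)F_{11}(0) - E_{31}(-1)H_{11}(-1) - E_{32}(-1)E_{10}(-1)F_{11}(0)$ together with multiples of $E_{32}(-1)F_{01}(-1)$ and $a\,E_{10}(-1)$. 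The remaining piece $\tfrac29[a^3, F_{32}(1)]$ expands into $a^2 F_{11}(0)$ together with reordering corrections, and here the bracket $[E_{10}, E_{21}] \in \mathbb C E_{31}$ incurred while reordering produces the term $\tfrac43 E_{31}(-2)$. Collecting all contributions gives the seven explicit terms plus the two unexpanded commutators.

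The difficulty is entirely in the bookkeeping: the computations require careful PBW reordering and exact tracking of the $G_2$ structure constants and of the central terms. The delicate points are the rational coefficients $\tfrac53, \tfrac43, \tfrac23, \tfrac13$, which arise from combining the structure constants produced by the various elementary brackets with the commutators incurred during reordering, and the $K$-terms, which must be extracted consistently from the brackets $[E_{32}(m), F_{32}(-m)]$. As a guard against arithmetic slips I would verify that each side lies in the correct graded component: $[u, F_{32}(1)]$ must have weight $2\theta^\vee - \theta - \delta = \alpha - \delta$, while $[v, F_{32}(1)]$ and $[w, F_{32}(1)]$ must both have weight $3\theta^\vee - \theta - 2\delta = (3\alpha+\beta) - 2\delta$, where $\theta = 3\alpha + 2\beta$ is the highest root; inspecting the stated right-hand sides confirms this in each case.
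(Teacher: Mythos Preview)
The paper does not prove this lemma at all: it is quoted verbatim from the companion paper \cite{AL}, as the citation in the lemma heading indicates. Your plan---expand each bracket via the derivation property of $[\,\cdot\,,F_{32}(1)]$ on $\mathcal{U}(\hat{\mathfrak g})$, feed in the elementary affine brackets $[E_{ij}(m),F_{32}(1)]$, and then PBW-reorder---is exactly the computation that underlies such identities, and the specific structural observations you record (which $[E_{ij},F_{32}]$ vanish, where the central term enters, how $E_{31}(-2)$ appears from the double commutator through $[E_{10},E_{21}]$) are all correct. The weight-consistency check at the end is a good sanity test. Since the present paper offers no argument to compare against, there is nothing further to say: your outline is sound, and the only labor is the careful bookkeeping you already flag.
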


The equations of Lemma \ref{lem:app:a1} may be used to verify the below equivalences in $N(k,0)$ modulo the $C_2$-subspace.
\begin{Lem}\label{lem:app:a2}  The following congruence relations hold in $N(k,0)$, modulo $C_2(N(k,0))$.
\begin{align*}
&[F_{32}(1),u].\mathbf{1} &\equiv& \left (K+ \tfrac 5 3 \right ) E_{10}(-1).\mathbf{1}, \\ \ 
&[F_{32}(1),v].\mathbf{1} &\equiv& \  \{3 E_{32}(-1)F_{01}(-1) + E_{31}(-1)H_{11}(-1) + \tfrac 1 3 a E_{10}(-1) \\& & &  + (K+1) a E_{10}(-1) +6(K+1) E_{32}(-1)F_{01}(-1)\\
& & & +3(K+1) E_{31}(-1)H_{01}(-1)\}.\mathbf{1} , \\ \
&[F_{32}(1),w].\mathbf{1} &\equiv& \ \{-E_{32}(-1)F_{01}(-1) - E_{31}(-1)H_{32}(-1)\}.\mathbf{1} ,  \\ \
&[[F_{32}(1),u],u].\mathbf{1} &\equiv& \ \{ - \frac 2 3 aE_{31}(-1)H_{10}(-1) - 2 E_{31}(-1)^2F_{10}(-1) - 2 E_{32}(-1)E_{31}(-1)F_{11}(-1)\\& & & + 2 aE_{32}(-1)F_{01}(-1) - 2 uE_{10}(-1) \} .\mathbf{1} , \\ \
&[[F_{32}(1),u],v].\mathbf{1}  &\equiv& \ \{ -\frac 2 3 E_{31}(-1)H_{11}(-1) + b E_{31}(-1) H_{21}(-1)
+ \frac 2 9 a^3 E_{10}(-1)\\& & &- \frac 4 3 ab E_{10}(-1) -3 a^2 E_{32}(-1)F_{01}(-1) - 3 v E_{10}(-1)\\
& & &-6 E_{31}(-1)E_{10}(-1)E_{01}(-1) + 6 E_{31}(-1)E_{11}(-1)H_{01}(-1)\\& & & + 6 E_{32}(-1)E_{10}(-1)H_{01}(-1)+6E_{32}(-1)E_{11}(-1)F_{01}(-1)\} .\mathbf{1} ,
\end{align*}
\begin{align*}
&[[F_{32}(1),v],v].\mathbf{1} &\equiv& \ \{(\tfrac 2 3 a^2-2b)(\tfrac 1 3 a^2 - 2b)E_{10}(-1) - 3a E_{31}^2(-1)E_{10}(-1) - 3 v E_{10}(-1)\\
& & & +2 u a E_{31}(-1)H_{10}(-1)-6v E_{31}(-1)E_{31}(-1)H_{10}(-1) - 9 v E_{31}(-1)H_{01}(-1)\\
& & & + 18 c E_{32}(-1)F_{01}(-1) + 18 E_{32}^2(-1)F_{01}(-1) E_{31}(-1)H_{01}(-1)\\
& & & -2 a^3 E_{32}(-1)F_{01}(-1) +3abE_{32}(-1)F_{01}(-1)\\
& & & +6aE_{31}(-1)E_{11}(-1)E_{32}(-1)F_{01}(-1)-6vE_{32}(-1)F_{01}(-1)\\
& & & +3aE_{32}(-1)E_{10}(-1)E_{31}(-1)H_{01}(-1)-3vE_{31}(-1)  \\
& & & + 6 u (E_{31}(-1)^2F_{10}(-1) + E_{32}(-1)E_{31}(-1)F_{11}(-1) \}.\mathbf{1} .
\end{align*}
Furthermore,
\[ [[F_{32}(1),u],w]. \mathbf{1} \equiv [[F_{32}(1),v],w]. \mathbf{1} \equiv [[F_{32}(1),w],w]. \mathbf{1} \equiv 0,\]
mod $C_2(N(k,0))$.
\end{Lem}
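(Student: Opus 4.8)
The plan is to read the three single commutators directly off Lemma \ref{lem:app:a1}, and to reduce the five double commutators to these by one application of Lemma \ref{lem:c2space}. All computations are carried out modulo $C_2(N(k,0))$ using the following elementary reductions, valid because the $C_2$-algebra of $N(k,0)$ is the symmetric algebra on the mode-$(-1)$ generators: first, $X(0).\mathbf{1}=0$ for every $X\in\mathfrak g$ (since $N(k,0)$ is induced from the trivial $\mathfrak g$-module) and $K$ acts as the scalar $k$; second, any PBW monomial of $\mathcal U(\hat{\mathfrak g}_-)$ carrying a creation operator of mode $\le -2$ lies in $C_2(N(k,0))$ after being applied to $\mathbf{1}$, by the definition of the $C_2$-subspace together with (\ref{eq:c2space:a1}); third, the factors of an element of $\mathcal U(\hat{\mathfrak g}_-)$ may be permuted without changing the class modulo $C_2(N(k,0))$, by (\ref{eq:c2space:a3}). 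Thus, modulo $C_2(N(k,0))$, one only has to track products of the mode-$(-1)$ creation operators applied to $\mathbf{1}$.

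For the relations $[F_{32}(1),u].\mathbf{1}$, $[F_{32}(1),v].\mathbf{1}$, $[F_{32}(1),w].\mathbf{1}$, I would write $[F_{32}(1),X]=-[X,F_{32}(1)]$, substitute the three formulas of Lemma \ref{lem:app:a1}, apply the result to $\mathbf{1}$, and discard every summand that either ends in some $Y(0)$ with $Y\in\mathfrak g$ or contains a creation operator of mode $\le -2$. The only summands of Lemma \ref{lem:app:a1} not already of one of these two shapes are the terms $-a[b,F_{32}(1)]$ and $-3[c,F_{32}(1)]$ occurring inside $[v,F_{32}(1)]$; for these it suffices to expand $b=E_{31}(-1)E_{11}(-1)-E_{32}(-1)E_{10}(-1)$ and the analogous expression for $c$ into monomials in the $E_\bullet(-1)$ and to use the affine bracket $[Y(-1),F_{32}(1)]=[Y,F_{32}](0)-\langle Y,F_{32}\rangle K$, whereupon the same two deletions apply. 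Collecting the surviving terms and using $K.\mathbf{1}=k\mathbf{1}$ yields the first three congruences.

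For the double commutators, I would use that, for $X,Y\in\{u,v,w\}$ and because $F_{32}(1).\mathbf{1}=0$,
\[
[[F_{32}(1),X],Y].\mathbf{1}\ =\ F_{32}(1).(XY.\mathbf{1})\ -\ X.\bigl([F_{32}(1),Y].\mathbf{1}\bigr)\ -\ Y.\bigl([F_{32}(1),X].\mathbf{1}\bigr).
\]
In the last two terms one left-multiplies the single-commutator expressions already obtained by $X$, respectively $Y$, and reorders modulo $C_2(N(k,0))$; the first term is evaluated by applying Lemma \ref{lem:c2space} with the $f_\ell$ taken to be the mode-$(-1)$ (and, for $w$, mode-$(-2)$) factors of $XY$, after which the sums of single and double brackets produced by that lemma are simplified by the two deletion rules. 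I expect the case $X=Y=v$ to be the main obstacle: it generates on the order of fifteen surviving monomials, and correctly tracking them together with the fractional coefficients coming from $u=\tfrac13 a^2-b$, $v=\tfrac29 a^3-ab-3c$ and from the $G_2$ structure constants is where essentially all the bookkeeping lies.

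Finally, the three vanishing statements $[[F_{32}(1),u],w].\mathbf{1}\equiv[[F_{32}(1),v],w].\mathbf{1}\equiv[[F_{32}(1),w],w].\mathbf{1}\equiv 0$ can be obtained with no bracket arithmetic, from three observations: (i) $w.\xi\in C_2(N(k,0))$ for every $\xi\in N(k,0)$, since $w$ is a combination of monomials each carrying an $E_\bullet(-2)$-factor and $C_2(N(k,0))$ is $\mathcal U(\hat{\mathfrak g}_-)$-stable by (\ref{eq:c2space:a1}) (this refines (\ref{eq:c2space:a2})); (ii) $C_2(N(k,0))$ is also stable under each $X(0)$ with $X\in\mathfrak g$, since $X(0).(Y(-2).\eta)=[X,Y](-2).\eta+Y(-2).(X(0).\eta)$; and (iii) therefore, for $X\in\{u,v,w\}$, the operator $[F_{32}(1),X]$—which by Lemma \ref{lem:app:a1} is a sum of products of operators of mode $0$, $-1$ or $-2$ and the scalar $K$—maps $C_2(N(k,0))$ into itself. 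Writing $[[F_{32}(1),X],w].\mathbf{1}=[F_{32}(1),X].(w.\mathbf{1})-w.\bigl([F_{32}(1),X].\mathbf{1}\bigr)$ and applying (i)--(iii) shows that both terms lie in $C_2(N(k,0))$, which completes the proof.
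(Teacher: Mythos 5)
Your proposal is correct and follows essentially the route the paper intends for this lemma: read the single brackets off Lemma \ref{lem:app:a1}, and verify the remaining congruences by direct expansion modulo $C_2(N(k,0))$ using (\ref{eq:c2space:a1})--(\ref{eq:c2space:a3}) and Lemma \ref{lem:c2space}, discarding summands that end in a zero mode acting on $\mathbf{1}$ or carry a mode $\le -2$ factor. Your stability argument (i)--(iii) for the three vanishing double brackets is a clean shortcut that avoids term-by-term computation, and the only thing left implicit is the routine (if lengthy) bookkeeping producing the explicit coefficients in the $[[F_{32}(1),u],v]$ and $[[F_{32}(1),v],v]$ formulas, which the paper likewise leaves to the reader.
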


In the proof of Lemma \ref{lem:P3:universal}, we defined for each $y\in \mathcal{B}_{PBW}$ a projection $\pi_y:\mathcal{U}(\hat{\mathfrak g}_-) \rightarrow \mathbb{C}y$, which sends $y$ to itself and every other basis element to zero.  Let us also denote by $\pi_y(\cdot)$ the induced projection on $N(k,0)$.  I.e., $\pi_y: N(k,0) \rightarrow \mathbb{C} y.\mathbf{1}$ is defined by:
\[
\pi_y(f.\mathbf{1}) = \pi_y(f).\mathbf{1}, \mbox{ for all } f \in \mathcal{U}(\hat{\mathfrak g}_-).
\]
In the final lemma of this appendix, we will compute for certain basis elements $y$ the value of $\pi_y(\cdot)$ on the element $F_{32}(1).(u^p v^q w^r.\mathbf{1}) \in N(k,0)$ for all $p,q,r \in \mathbb{Z}$.  Let us denote
\[
y(p',q') = E_{31}(-1)^{p'+2q'+2} E_{11}(-1)^{p'}  E_{01}(-1)^{q'} F_{10}(-1) \quad (p',q' \in \mathbb{Z}_{\ge 0}).
\]
We will then also write $\pi_{p',q'} = \pi_{y(p',q')}$ for all $p', q' \in \mathbb{Z}_{\ge 0}$.

\begin{Lem}\label{lem:app:a}
For $p',q' \in \mathbb{Z}_{\ge 0}$, let $\pi_{p',q'}: N(k,0) \rightarrow \mathbb{C} y(p',q').\mathbf{1}$ be the projection defined  as above.  If $p,q,r \in \mathbb{Z}_{\ge 0}$ and $r>0$, then
\begin{equation*}
\pi_{p',q'}\left(F_{32}(1).(u^p v^q w^r.\mathbf{1})\right) = 0.
\end{equation*}
On the other hand if $p,q \in \mathbb{Z}_{\ge 0}$, then
\begin{equation*}
\pi_{p',q'}(F_{32}(1).(u^p v^q.\mathbf{1}))= 
\begin{cases}
-2\binom{p}{2}  (-1)^{p+q-2}3^{q}\ y(p-2,q).\mathbf{1} & \text{if $p=p'+2$ and $q = q'$}\\
\phantom{-} 6\binom{q}{2} (-1)^{p+q-1}3^{q-2}\ y(p+1,q-2).\mathbf{1} & \text{if $p=p'-1$ and $q = q'+2$}\\
\phantom{-} 0 &\text{otherwise.}
\end{cases}
\end{equation*}
\end{Lem}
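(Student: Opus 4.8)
The plan is to work throughout in $N(k,0)$ modulo the $C_2$-subspace. First I would note that, by the defining formula for $C_2(N(k,0))$ together with the PBW theorem, every vector in $C_2(N(k,0))$ lies in the span of those PBW monomials having at least one factor $X(-j)$ with $j\ge 2$; since $y(p',q')$ is a product of elements of mode $-1$, it follows that $\pi_{p',q'}$ annihilates $C_2(N(k,0))$, so it suffices to compute $F_{32}(1).(u^pv^qw^r.\mathbf{1})$ modulo $C_2(N(k,0))$. Applying Lemma~\ref{lem:c2space} with $X=F_{32}$ and $(f_1,\dots,f_t)$ equal to the list of $p$ copies of $u$, $q$ copies of $v$ and $r$ copies of $w$ expresses this vector as a single-commutator sum plus a double-commutator sum; using (\ref{eq:c2space:a1}) and Lemma~\ref{lem:app:a2}, I would then replace each $[F_{32}(1),f_i].\mathbf{1}$ and $[[F_{32}(1),f_i],f_j].\mathbf{1}$ by the explicit elements of $\mathcal{U}(\hat{\mathfrak g}_-).\mathbf{1}$ listed there. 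The key bookkeeping observation is that $\pi_{p',q'}(\xi.\mathbf{1})$ vanishes whenever $\xi$ is a monomial in mode-$(-1)$ root vectors whose multiset of factors differs from that of $y(p',q')$; in particular $\xi$ must contain exactly one factor $F_{10}(-1)$.

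For the case $r>0$, I would first observe that, since $w$ is a linear combination of monomials each carrying a factor of mode $-2$, one has $w\,\eta\in C_2(N(k,0))$ for every $\eta\in N(k,0)$, whence any summand still carrying a factor of $w$ is congruent to $0$. In the double-commutator sum every surviving pair involves a $w$, producing $[[F_{32}(1),u],w].\mathbf{1}$, $[[F_{32}(1),v],w].\mathbf{1}$ or $[[F_{32}(1),w],w].\mathbf{1}$, each of which is $\equiv 0$ by Lemma~\ref{lem:app:a2}. In the single-commutator sum the only summand not carrying a factor of $w$ is $u^pv^q[F_{32}(1),w].\mathbf{1}$, which occurs only when $r=1$; by Lemma~\ref{lem:app:a2} this is, modulo $C_2(N(k,0))$, a polynomial in the mode-$(-1)$ root vectors appearing in $u$, $v$ and $[F_{32}(1),w].\mathbf{1}$, none of which is $F_{10}$, so $\pi_{p',q'}$ kills it. This establishes the first assertion.

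For the case $r=0$, Lemma~\ref{lem:c2space} gives, modulo $C_2(N(k,0))$,
\begin{align*}
F_{32}(1).(u^pv^q.\mathbf{1})\ \equiv\
& \ p\,u^{p-1}v^q[F_{32}(1),u].\mathbf{1} + q\,u^pv^{q-1}[F_{32}(1),v].\mathbf{1}\\
& \ +\ \tbinom{p}{2}u^{p-2}v^q[[F_{32}(1),u],u].\mathbf{1} + pq\,u^{p-1}v^{q-1}[[F_{32}(1),u],v].\mathbf{1}\\
& \ +\ \tbinom{q}{2}u^pv^{q-2}[[F_{32}(1),v],v].\mathbf{1}.
\end{align*}
Substituting the formulas of Lemma~\ref{lem:app:a2}: since neither $u$ nor $v$ involves $F_{10}$, and since $F_{10}(-1)$ occurs in those formulas only in the summand $-2E_{31}(-1)^2F_{10}(-1)$ of $[[F_{32}(1),u],u].\mathbf{1}$ and the summand $6u\,E_{31}(-1)^2F_{10}(-1)$ of $[[F_{32}(1),v],v].\mathbf{1}$, the bookkeeping observation forces
\[
\pi_{p',q'}\big(F_{32}(1).(u^pv^q.\mathbf{1})\big)\ =\ -2\tbinom{p}{2}\,\pi_{p',q'}\big(u^{p-2}v^qE_{31}(-1)^2F_{10}(-1).\mathbf{1}\big) + 6\tbinom{q}{2}\,\pi_{p',q'}\big(u^{p+1}v^{q-2}E_{31}(-1)^2F_{10}(-1).\mathbf{1}\big).
\]
It then remains to evaluate $\pi_{p',q'}\big(u^Pv^QE_{31}(-1)^2F_{10}(-1).\mathbf{1}\big)$: reducing modulo $C_2(N(k,0))$ and retaining only the monomials free of $E_{21},E_{32},E_{10},H_{01},F_{01}$ --- the only ones able to equal $y(p',q')$ --- the element $u$ contributes only $-E_{31}(-1)E_{11}(-1)$ and $v$ only $-3E_{31}(-1)^2E_{01}(-1)$, so this projection equals $(-1)^{P+Q}3^{Q}\,y(p',q').\mathbf{1}$ when $(P,Q)=(p',q')$ and $0$ otherwise. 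Setting $(P,Q)=(p-2,q)$ and then $(P,Q)=(p+1,q-2)$ yields the displayed formula; the two resulting conditions are mutually exclusive, which matches the case distinction in the statement.

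The conceptual content here is slight, and the main obstacle is purely one of bookkeeping in the case $r=0$: one must check, against the (long) list of Lemma~\ref{lem:app:a2}, that $F_{10}(-1)$ appears in exactly the two indicated summands and that no other summand --- after multiplication by a power of $u$ and of $v$ and reduction to PBW form modulo $C_2(N(k,0))$ --- can acquire an isolated factor $F_{10}(-1)$. I would double-check this directly against the commutator formulas of Lemma~\ref{lem:app:a1}.
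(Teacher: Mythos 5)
Your proposal is correct and follows essentially the same route as the paper: expand $F_{32}(1).(u^pv^qw^r.\mathbf{1})$ modulo the $C_2$-subspace via Lemma~\ref{lem:c2space}, substitute the congruences of Lemma~\ref{lem:app:a2}, observe that $F_{10}(-1)$ (and $F_{11}(-1)$) occur only in the $\big[[F_{32}(1),u],u\big]$ and $\big[[F_{32}(1),v],v\big]$ terms, and then evaluate the projections, with the mode-counting remark that $\pi_{p',q'}$ annihilates $C_2(N(k,0))$ upgrading the congruences to equalities. Your explicit justification of that last point, and the explicit extraction of the surviving monomials $-E_{31}(-1)E_{11}(-1)$ from $u$ and $-3E_{31}(-1)^2E_{01}(-1)$ from $v$, are just more detailed versions of what the paper leaves to the reader.
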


\begin{proof}
Since by Lemma \ref{lem:app:a2} $\big[[F_{32}(1),u ],w \big].\mathbf{1} \equiv \big[[F_{32}(1),v ],w \big].\mathbf{1} \equiv \big[[F_{32}(1),w ],w \big].\mathbf{1} \equiv 0$, mod $C_2(N(k,0))$, it follows from (\ref{eq:c2space:a1}) that
\begin{equation*}
 \tbinom{p}{1}\tbinom{r}{1}  u^{p-1} v^{q} w^{r-1} \big[[F_{32}(1),u ],w \big].\mathbf{1}\ +\ \tbinom{q}{1}\tbinom{r}{1}  u^{p} v^{q-1} w^{r-1} \big[[F_{32}(1),v ],w \big].\mathbf{1}
 \end{equation*}
\begin{equation*} 
   +\  \tbinom{r}{2} u^{p} v^{q} w^{r-2} \big[[F_{32}(1),w ],w \big].\mathbf{1} \quad \equiv \quad 0,
 \end{equation*}
mod $C_2(N(k,0))$.  Thus by Lemma \ref{lem:c2space} it follows that 
\begin{align}
 F_{32}(1). (u^p v^q w^r. \mathbf{1}) \equiv \phantom{LLLLLLLL}& \nonumber\\
  \ \tbinom{p}{1} \ u^{p-1} v^{q} w^{r}  [F_{32}(1), u ].\mathbf{1} & \phantom{L} + \ \ \tbinom{p}{2}\ u^{p-2} v^{q} w^{r} \big[ [F_{32}(1), u ], u \big].\mathbf{1} \label{eq:sing:a1}\\
  +\ \ \tbinom{q}{1}\  u^{p} v^{q-1} w^{r} [F_{32}(1), v ].\mathbf{1} & \phantom{L} +\  \tbinom{p}{1}\tbinom{q}{1}\ u^{p-1} v^{q-1} w^{r} \big[ [F_{32}(1), u ], v \big].\mathbf{1} \nonumber \\
   +\ \tbinom{r}{1}\ u^{p} v^{q} w^{r-1} [F_{32}(1),w ].\mathbf{1} & \phantom{L}  + \  \ \tbinom{q}{2}\ u^{p} v^{q-2} w^{r} \big[ [F_{32}(1), v ], v \big].\mathbf{1}, \nonumber
\end{align}
mod $C_2(N(k,0))$.

Combining (\ref{eq:sing:a1}) with (\ref{eq:c2space:a1}), (\ref{eq:c2space:a2}), and (\ref{eq:c2space:a3}), we see that
\begin{equation}\label{eq:sing:a2}
 F_{32}(1). (u^p v^q w^r. \mathbf{1}) \equiv 0,\quad \mbox{mod } C_2(N(k,0)),
\end{equation}
whenever $r \ge 2$.  Similarly, if $r=1$ we have
\begin{equation}\label{eq:sing:a3}
F_{32}(1).(u^p v^q w.\mathbf{1}) \equiv u^p v^q [F_{32}(1), w].\mathbf{1},
\end{equation}
and in the case $r=0$ we have
\begin{align}
 F_{32}(1). (u^p v^q . \mathbf{1}) \equiv \phantom{LLLLLLLL}& \nonumber\\
  \ \tbinom{p}{1} \ u^{p-1} v^{q}  [F_{32}(1), u ].\mathbf{1} & \phantom{L} + \ \ \tbinom{p}{2}\ u^{p-2} v^{q}  \big[ [F_{32}(1), u ], u \big].\mathbf{1} \label{eq:sing:a4}\\
   & \phantom{L} +\  \tbinom{p}{1}\tbinom{q}{1}\ u^{p-1} v^{q-1}  \big[ [F_{32}(1), u ], v \big].\mathbf{1} \nonumber \\
   +\ \ \tbinom{q}{1}\  u^{p} v^{q-1}  [F_{32}(1), v ].\mathbf{1} & \phantom{L}  + \  \ \tbinom{q}{2}\ u^{p} v^{q-2}  \big[ [F_{32}(1), v ], v \big].\mathbf{1}, \nonumber
\end{align}
mod $C_2(N(k,0))$, respectively.

Next, notice that only two of the formulas listed in the statement of Lemma \ref{lem:app:a2} contain either $F_{10}(-1)$ or $F_{11}(-1)$ as a factor of one of the terms appearing on the right-hand side of the congruence relation.  In the formula for $\big[ [F_{32}(1), u ], u \big].\mathbf{1}$, we find the terms 
\[-2E_{31}(-1)^2F_{10}(-1).\mathbf{1} - 2E_{32}(-1)E_{31}(-1) F_{11}(-1).\mathbf{1}.\]
While in the formula for $\big[ [F_{32}(1), v ], v \big].\mathbf{1}$, we see the terms
\[6u E_{31}(-1)^2F_{10}(-1).\mathbf{1} +6 u E_{32}(-1)E_{31}(-1) F_{11}(-1).\mathbf{1}.\]
And there are no other appearance of $F_{10}(-1)$ or $F_{11}(-1)$ in any of the formulas.  We thus see from (\ref{eq:sing:a1}) and (\ref{eq:sing:a2}) that 
\begin{equation}\label{eq:sing:a5}
\pi_{p',q'}\big( F_{32}(1). (u^p v^q w^r. \mathbf{1} ) \big) \equiv 0, \quad \mbox{mod } C_2(N(k,0)),
\end{equation}
whenever $r \ge 1$.  Similary in the case $r=0$, we may use (\ref{eq:sing:a3}) along with Lemma \ref{lem:app:a2} to verify 
\begin{equation}\label{eq:sing:a6}
\pi_{p',q'}(F_{32}(1).(u^p v^q.\mathbf{1}))\equiv
\begin{cases}
-2\binom{p}{2}  (-1)^{p+q-2}3^{q}\ y(p-2,q).\mathbf{1} & \text{if $p=p'+2$ and $q = q'$}\\
\phantom{-} 6\binom{q}{2} (-1)^{p+q-1}3^{q-2}\ y(p+1,q-2).\mathbf{1} & \text{if $p=p'-1$ and $q = q'+2$}\\
\phantom{-} 0 &\text{otherwise,}
\end{cases}
\end{equation}
mod $C_2(N(k,0)$, respectively.  Now since the map $\pi_{p',q'}$ is a projection onto a one-dimensional subspace of $N(k,0)$, which is spanned by $y(p',q').\mathbf{1}$ and does not vanish modulo the $C_2$-subspace, it follows that the congruence relations in (\ref{eq:sing:a5}) and (\ref{eq:sing:a6}) may both be replaced by equality in $N(k,0)$.  This completes the proof.
\end{proof}

\bigskip 

\section{}
We provide here some results which are used in the proof of Section 4.  Let us begin by recalling some lemmas contained in the previous paper.

\begin{Lem}\cite{AL,P} \label{lem:product} 
Let $X \in \frak g$ and let $Y_1,\dots, Y_m \in \mathcal{U}(\frak g)$.  Then \[(X^n)_L(Y_1 \dots Y_m)= \sum_{\substack{(k_1, \dots, k_m)\in(\mathbb{Z}_{\ge 0})^m\\ \sum{k_i=n}} }\binom{n}{k_1 \dots k_m}(X^{k_1})_L Y_1\dots (X^{k_m})_L Y_m,\]
where $\binom{n}{k_1\dots k_n} = \frac{n!}{k_1!\cdots k_m!}$.
\end{Lem}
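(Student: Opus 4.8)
The plan is to treat this as the multinomial Leibniz rule for an iterated derivation. The key observation is that $X_L = \mathrm{ad}(X)$ is a derivation of the associative algebra $\mathcal{U}(\mathfrak g)$: since the bracket on $\mathcal{U}(\mathfrak g)$ is the commutator, one has $X_L(fg) = [X,fg] = [X,f]\,g + f\,[X,g] = (X_L f)\,g + f\,(X_L g)$ for all $f,g\in\mathcal{U}(\mathfrak g)$. Under the unique algebra homomorphism $\mathcal{U}(\mathfrak g)\to\mathrm{End}_{\mathbb C}(\mathcal{U}(\mathfrak g))$ extending $X\mapsto X_L$, the element $X^n$ is sent to $(X_L)^n$, so $(X^n)_L$ is precisely the $n$-fold composite $X_L\circ\cdots\circ X_L$; it is this reading that makes the stated formula the multinomial Leibniz rule.

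First I would establish the two-factor case by induction on $n$: for any derivation $D$ of an associative algebra and any elements $f,g$, one has $D^n(fg)=\sum_{k=0}^n\binom{n}{k}(D^k f)(D^{n-k}g)$. The base case $n=0$ is immediate; for the inductive step one applies $D$ to the inductive hypothesis, uses the derivation property on each summand, and regroups the two resulting sums via Pascal's identity $\binom{n}{k}+\binom{n}{k-1}=\binom{n+1}{k}$. Taking $D=X_L$ gives the $m=2$ instance of the lemma.

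The general statement then follows by induction on $m$, the case $m=1$ being trivial. For the inductive step I would factor $Y_1\cdots Y_m = Y_1\,(Y_2\cdots Y_m)$, apply the two-factor formula to write $(X^n)_L(Y_1\cdots Y_m)=\sum_{k_1+\ell=n}\binom{n}{k_1}\bigl((X^{k_1})_L Y_1\bigr)\bigl((X^{\ell})_L(Y_2\cdots Y_m)\bigr)$, expand $(X^{\ell})_L(Y_2\cdots Y_m)$ by the induction hypothesis with $\ell=k_2+\cdots+k_m$, and finally collect coefficients using $\binom{n}{k_1}\binom{n-k_1}{k_2,\dots,k_m}=\binom{n}{k_1,k_2,\dots,k_m}$. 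I do not expect any genuine difficulty here — the argument is a routine double induction and the only point requiring a little care is the bookkeeping of the multinomial coefficients in this last step. (The statement is in any case already recorded in \cite{AL,P}, so one could simply cite it; the above is the self-contained proof.)
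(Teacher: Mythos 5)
Your proof is correct; the paper itself gives no proof of this lemma (it is quoted from \cite{AL,P}), and your argument---observing that $X_L$ is a derivation of $\mathcal{U}(\mathfrak g)$, that $(X^n)_L=(X_L)^n$ via the algebra homomorphism $\mathcal{U}(\mathfrak g)\to\mathrm{End}_{\mathbb C}(\mathcal{U}(\mathfrak g))$ extending $X\mapsto X_L$, and then running the binomial/multinomial Leibniz induction---is the standard argument underlying the cited references. In particular you correctly handle the one point that needs care, namely that $(X^n)_L$ means the $n$-fold iterate of $\mathrm{ad}(X)$ rather than the commutator with $X^n$.
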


\begin{Lem}\label{lem:app:b1} \cite{AL} \hfill
\begin{enumerate}
\item $(E_{ij}^m)_L (F_{ij}^m) \in m!H_{ij}(H_{ij}-1)\cdots(H_{ij}-m+1)+\mathcal{U}(\frak g)E_{ij}$, for all $i\alpha + j\beta \in \Delta_+$.
\item Suppose $X \in  \mathcal{U}(\frak g)_0$, the zero-weight subspace of $\mathcal U(\frak g)$.  Then $X \in \frak n_{-}\ \mathcal{U}(\frak g)$ if and only if $X \in \mathcal{U}(\frak g) \frak n_{+}$. 
\item  For $Y\in \mathcal{U}(\frak g)$ and $n>r>0$, we have \[(E_{ij}^n)_L (F_{ij}^r Y) \in F_{ij}\mathcal{U}(\frak g) + \tfrac{n!}{(n-r)!} (H_{ij}-n+r)\cdots(H_{ij}-n+1)(E_{ij}^{n-r})_L Y +  \mathcal{U}(\frak g)E_{ij}. \] 
\end{enumerate}
\end{Lem}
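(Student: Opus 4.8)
The three parts are of increasing difficulty, and since part (3) reduces to part (1), the plan is to prove them in that order. For part (1) I would compute directly inside the $\mathfrak{sl}_2$-triple $(E,H,F):=(E_{ij},H_{ij},F_{ij})$, noting $\mathcal{U}(\mathfrak{sl}_2)E\subseteq\mathcal{U}(\mathfrak g)E_{ij}$ so that it suffices to work there. First I would record $[E,F^m]=m\,F^{m-1}(H-m+1)$, which follows from the telescoping $[E,F^m]=\sum_{j=0}^{m-1}F^jHF^{m-1-j}$ (using $[E,F]=H_{ij}$) together with $F^jH=(H+2j)F^j$. Then I would induct on $m$, writing $(E^m)_L(F^m)=m\,(E^{m-1})_L\!\big(F^{m-1}(H-m+1)\big)$ and applying the Leibniz rule (Lemma \ref{lem:product}) to the two-factor product $F^{m-1}\cdot(H-m+1)$: since $[E,H]=-2E$ and $[E,[E,H]]=0$, the only surviving terms are those in which the iterated bracket hits the $H$-factor at most once. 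The ``hits once'' contribution lies in $\mathcal{U}(\mathfrak{sl}_2)E$; the ``hits zero times'' contribution is $(E^{m-1})_L(F^{m-1})\cdot(H-m+1)$, which by the inductive hypothesis equals $(m-1)!\,H(H-1)\cdots(H-m+2)(H-m+1)$ modulo $\mathcal{U}(\mathfrak{sl}_2)E$, after moving the stray $E$ to the right via $E(H-c)=(H-c-2)E$. Collecting the factors of $m$ gives the stated formula.

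For part (2) I would invoke PBW for $\mathfrak g=\mathfrak n_-\oplus\mathfrak h\oplus\mathfrak n_+$. The key observation is that a homogeneous element of $\mathcal{U}(\mathfrak g)$ whose weight is not a non-negative integer combination of positive roots must lie in $\mathfrak n_-\mathcal{U}(\mathfrak g)$ — order the PBW basis with $\mathfrak n_-$ first and note that a monomial $fhe$ has weight $\mathrm{wt}(f)+\mathrm{wt}(e)$ with $\mathrm{wt}(e)\ge0$, so $f$ must be a nontrivial $\mathfrak n_-$-monomial — and symmetrically a weight that is not a non-positive combination of positive roots forces membership in $\mathcal{U}(\mathfrak g)\mathfrak n_+$. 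Now let $X\in\mathcal{U}(\mathfrak g)_0$. If $X\in\mathcal{U}(\mathfrak g)\mathfrak n_+$, write $X=\sum_\alpha Y_\alpha e_\alpha$ with $e_\alpha$ root vectors spanning $\mathfrak n_+$ and project onto weight $0$, so each $Y_\alpha$ has weight $-\alpha<0$, hence $Y_\alpha\in\mathfrak n_-\mathcal{U}(\mathfrak g)$ and $X\in\mathfrak n_-\mathcal{U}(\mathfrak g)$; the converse is the mirror argument.

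For part (3) the plan is to reduce to part (1) via Leibniz. Expanding $(E^n)_L(F^rY)=\sum_{k=0}^n\binom nk(E^k)_L(F^r)\,(E^{n-k})_L(Y)$ and comparing weights in the PBW basis $\{F^aH^bE^c\}$ of $\mathcal{U}(\mathfrak{sl}_2)$, every monomial in $(E^k)_L(F^r)$ has $c-a=k-r$; hence for $k<r$ one gets $a\ge1$ and $(E^k)_L(F^r)\in F_{ij}\mathcal{U}(\mathfrak g)$, while for $k=r+j\ge r$ one gets $(E^{r+j})_L(F^r)=\lambda_j(H)E^{j}+\big(\text{element of }F_{ij}\mathcal{U}(\mathfrak g)\big)$ for some $\lambda_j\in\mathbb C[H]$ depending only on $r,j$. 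Using the normal-ordering identity $E^jZ\equiv(E^j)_L(Z)\pmod{\mathcal{U}(\mathfrak g)E_{ij}}$ (a short induction on $j$), the $k=r+j$ term of the sum becomes $\binom{n}{r+j}\lambda_j(H)\,(E^{n-r})_L(Y)$ modulo $F_{ij}\mathcal{U}(\mathfrak g)+\mathcal{U}(\mathfrak g)E_{ij}$, and the $k<r$ terms lie in $F_{ij}\mathcal{U}(\mathfrak g)$; thus $(E^n)_L(F^rY)\equiv P(H)\,(E^{n-r})_L(Y)$ modulo $F_{ij}\mathcal{U}(\mathfrak g)+\mathcal{U}(\mathfrak g)E_{ij}$, where $P(H)=\sum_j\binom{n}{r+j}\lambda_j(H)\in\mathbb C[H]$ is independent of $Y$. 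To identify $P$, I would specialise $Y=F^{n-r}$, so that $F^rY=F^n$: applying part (1) to $(E^n)_L(F^n)$ and to $(E^{n-r})_L(F^{n-r})$, and using that $\mathbb C[H]\cap\big(F_{ij}\mathcal{U}(\mathfrak g)+\mathcal{U}(\mathfrak g)E_{ij}\big)=0$, I obtain the polynomial identity $P(H)\,(n-r)!\,H\cdots(H-n+r+1)=n!\,H\cdots(H-n+1)$ in $\mathbb C[H]$, whence $P(H)=\tfrac{n!}{(n-r)!}(H-n+r)(H-n+r-1)\cdots(H-n+1)$, which is the assertion.

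The main obstacle throughout is the bookkeeping caused by $\mathcal{U}(\mathfrak g)E_{ij}$ and $F_{ij}\mathcal{U}(\mathfrak g)$ being only one-sided ideals: every step has to normal-order $E_{ij}$ to the far right, and one must repeatedly use that left multiplication by $\mathcal{U}(\mathfrak h)$ preserves $\mathcal{U}(\mathfrak g)E_{ij}$ while right multiplication preserves $F_{ij}\mathcal{U}(\mathfrak g)$. The specialisation $Y=F^{n-r}$ in part (3) is the device that avoids computing the auxiliary polynomials $\lambda_j$ explicitly.
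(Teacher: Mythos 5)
Your argument is correct, and in fact it supplies a proof that the paper itself omits: Lemma \ref{lem:app:b1} is only quoted from \cite{AL}, where the analogous statements are established by the same kind of $\mathfrak{sl}_2$-triple computations (following Per\v{s}e's method), so your route is essentially the standard one rather than a genuinely different one. Within that framework your write-up is sound: the identity $[E,F^m]=mF^{m-1}(H-m+1)$ plus the two-term Leibniz expansion (only $k=0,1$ survive since $(\operatorname{ad}E)^2H=0$) gives (1) by induction; (2) is the usual PBW/weight argument; and in (3) the weight count $c-a=k-r$ in the PBW basis $F^aH^bE^c$ of $\mathcal{U}(\mathfrak{sl}_2)$, together with $E^jZ\equiv(E^j)_L Z \pmod{\mathcal{U}(\mathfrak{g})E_{ij}}$, correctly yields $(E^n)_L(F^rY)\equiv P(H)(E^{n-r})_LY$ modulo $F_{ij}\mathcal{U}(\mathfrak{g})+\mathcal{U}(\mathfrak{g})E_{ij}$ with $P$ independent of $Y$, and the specialisation $Y=F^{n-r}$ is a clean device for identifying $P$ without computing the auxiliary polynomials $\lambda_j$ (one checks that $H\cdots(H-n+1)$ factors as $H\cdots(H-n+r+1)$ times $(H-n+r)\cdots(H-n+1)$, which matches the stated coefficient). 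The one point you should not leave as a bare assertion is $\mathbb{C}[H_{ij}]\cap\bigl(F_{ij}\mathcal{U}(\mathfrak{g})+\mathcal{U}(\mathfrak{g})E_{ij}\bigr)=0$: it is true, and follows in one line either from PBW with an ordering placing $F_{ij}$ leftmost and $E_{ij}$ rightmost (every monomial in the sum has positive exponent on $F_{ij}$ or on $E_{ij}$), or by evaluating on highest weight vectors of Verma modules $M(\lambda)$ and noting the coefficient of $v_\lambda$ is $p(\lambda(H_{ij}))$ for all $\lambda$; add that line and the proof is complete.
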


\begin{Lem} \label{lem:app:b2}\cite{AL}  The following identities hold in $\mathcal{U}(\frak g)$.
First we have:
 \begin{align*}
  (F_{31})_L [a] &=\  F_{10},\\
 \tfrac{1}{2!}(F_{31}^2)_L [b] &=\ F_{21}E_{01} - F_{31}E_{11},\\
 \tfrac{1}{3!}(F_{31}^3)_L [c] &=\  F_{31}(H_{32}+1)E_{01}+F_{32}E_{01}^2-F_{31}^2 E_{32},\\
 (F_{31}^2)_L [a] &= \ (F_{31}^3)_L [b]\ =\ (F_{31}^4)_L [c]= 0.
\end{align*}
 Next we have: 
\begin{align*}
  (F_{32}) _L [a] &= \ F_{11},\\
\tfrac{1}{2!} (F_{32}^2)_L [b] &= \ F_{32}E_{10} - F_{21}F_{01},\\
\tfrac{1}{3!} (F_{32}^3)_L [c] &= \ F_{32}^2 E_{31} -F_{32}F_{01}(H_{31}+2) - F_{31}F_{01}^2 , \\
 (F_{32}^2)_L [a] &= \ (F_{32}^3)_L [b]\ =\ (F_{32}^4)_L [c]= 0.
\end{align*}
\end{Lem}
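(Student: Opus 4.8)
The plan is to treat each operator $(F_{31})_L = \mathrm{ad}(F_{31})$ and $(F_{32})_L = \mathrm{ad}(F_{32})$ as a derivation of the associative algebra $\mathcal{U}(\mathfrak g)$, so that everything reduces to the $G_2$ commutation relations of \cite{AL} together with the $\mathbb{Z}\alpha \oplus \mathbb{Z}\beta$ weight grading of $\mathcal{U}(\mathfrak g)$. Powers are handled by the multinomial expansion of Lemma \ref{lem:product}. From the formulas $[a]=E_{21}$, $[b]=E_{31}E_{11}-E_{32}E_{10}$, $[c]=E_{31}^2E_{01}-E_{32}E_{31}H_{01}-E_{32}^2F_{01}$ one reads off that $[a],[b],[c]$ are homogeneous of weights $\theta^{\vee}$, $2\theta^{\vee}$, $3\theta^{\vee}$, and that each application of $(F_{31})_L$ (resp. $(F_{32})_L$) lowers the weight by the long root $3\alpha+\beta$ (resp. $3\alpha+2\beta$).

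I would dispose of all six vanishing assertions at once by an $\mathfrak{sl}_2$ argument. The point is that each of $[a],[b],[c]$ is a highest weight vector for the adjoint action of both triples $(E_{31},H_{31},F_{31})$ and $(E_{32},H_{32},F_{32})$. Since $(E_{31})_L$ and $(E_{32})_L$ raise weights by long roots, a short weight count shows $(E_{31})_L[a]=(E_{31})_L[b]=0$ and $(E_{32})_L[a]=(E_{32})_L[b]=0$ term by term, while $(E_{31})_L[c]=0$ and $(E_{32})_L[c]=0$ follow once one uses the structure-constant normalizations $[E_{31},E_{01}]=E_{32}$ and $[E_{32},F_{01}]=E_{31}$ fixed in \cite{AL} (these are exactly what make the cubic combinations in $[c]$ cancel). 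Computing the $H_{31}$- and $H_{32}$-eigenvalues gives $\la 2\alpha+\beta,H_{31}\ra=1$, $\la 4\alpha+2\beta,H_{31}\ra=2$, $\la 6\alpha+3\beta,H_{31}\ra=3$, and likewise for $H_{32}$. Finite-dimensional $\mathfrak{sl}_2$ theory then forces a highest weight vector of weight $N$ to be annihilated by the $(N{+}1)$st power of the lowering operator, which is precisely $(F_{31}^2)_L[a]=(F_{31}^3)_L[b]=(F_{31}^4)_L[c]=0$ and the $F_{32}$-analogues. This is cleaner than checking vanishing by hand, since for $[c]$ the relevant capacities are saturated and the cancellation is not term by term.

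For the nonzero identities I would compute directly. First I would tabulate the elementary brackets, the nonzero ones being $[F_{31},E_{21}]=F_{10}$, $[F_{31},E_{31}]=-H_{31}$, $[F_{31},E_{32}]\propto E_{01}$, $[F_{31},E_{10}]\propto F_{21}$, $[F_{31},H_{ij}]\propto F_{31}$ (and the analogous $F_{32}$ list), reading off signs and constants from \cite{AL}. Then applying Leibniz and Lemma \ref{lem:product} and normal-ordering the result produces the stated expressions, the factors $\tfrac{1}{2!},\tfrac{1}{3!}$ cancelling the multinomial coefficients. For instance, in $\tfrac{1}{2!}(F_{31}^2)_L[b]$ exactly two monomials survive: one of the form $E_{01}F_{21}$ coming from $E_{32}E_{10}$ and one of the form $F_{31}E_{11}$ coming from $E_{31}E_{11}$; since $E_{01}$ and $F_{21}$ commute (their weight sum $-2\alpha$ is not a root) this collapses to $F_{21}E_{01}-F_{31}E_{11}$, matching the claim.

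The \textbf{main obstacle} will be the two degree-three identities $\tfrac{1}{3!}(F_{31}^3)_L[c]$ and $\tfrac{1}{3!}(F_{32}^3)_L[c]$. Here three lowering operators distributed over the three-term cubic $[c]$ generate many contributions, and the constant shifts $H_{32}+1$ and $H_{31}+2$ appearing in the answers arise precisely from commuting Cartan elements past lowering operators while normal-ordering; tracking these shifts and the accompanying cancellations is the delicate bookkeeping. I expect to keep this manageable by organizing the expansion strictly by weight and by recognizing the target expression as the lowest two rungs of the relevant $\mathfrak{sl}_2$-string through $[c]$, which both constrains the possible monomials and predicts the coefficients.
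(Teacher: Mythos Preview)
The paper does not supply a proof of this lemma: the citation \cite{AL} in the lemma header signals that the identities are quoted from the companion paper, and indeed no argument follows the statement here. Your proposal therefore cannot be matched against a proof in the present paper; you are reconstructing one from scratch.

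That said, your outline is sound, and in one respect it is more conceptual than a pure tabulation. The $\mathfrak{sl}_2$ argument for the six vanishing statements is legitimate: the adjoint action of $\mathfrak g$ on $\mathcal{U}(\mathfrak g)$ is locally finite, the $H_{31}$- and $H_{32}$-eigenvalue computations you give are correct, and once you verify that $[a],[b],[c]$ are killed by $(E_{31})_L$ and $(E_{32})_L$ (which, as you correctly flag, is a genuine cancellation for $[c]$ rather than a term-by-term weight obstruction), the vanishing of the $(N{+}1)$st lowering power follows at once. This packages all six zeros into a single stroke, whereas a direct computation in the style of \cite{AL} would verify each separately.

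For the nonzero identities your plan---Leibniz expansion via Lemma~\ref{lem:product}, structure constants from \cite{AL}, then normal-ordering---is exactly the route taken in the original source, and your identification of the degree-three cases as the main bookkeeping burden is accurate: the constant shifts $H_{32}+1$ and $H_{31}+2$ arise precisely from commuting Cartan elements past root vectors during the reordering, and the sample computation you sketch for $\tfrac{1}{2!}(F_{31}^2)_L[b]$ is correct. There is no gap.
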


\begin{Lem} \label{lem:app:b3}\cite{AL}
Suppose that $n,p,q,r \in \mathbb{Z}_{\ge 0}$ and $n=p+2q+3r$.  Then the following hold in $\mathcal{U}(\frak g)$:
\begin{align*}
(E_{10}^n F_{31}^{n})_L([a]^p[b]^q[c]^r) \ \equiv  \left(\tfrac{n!}{(n-p)!}\right)^2 (H_{10}-n+1) \cdots (H_{10}-n+p) (E_{10}^{n-p}F_{31}^{n-p})_L ([b]^q[c]^r) , \\
(E_{11}^n F_{32}^{n})_L([a]^p[b]^q[c]^r) \  \equiv \left(\tfrac{n!}{(n-p)!}\right)^2 (H_{11}-n+1) \cdots (H_{11}-n+p) (E_{11}^{n-p}F_{32}^{n-p})_L ([b]^q[c]^r)    ,
\end{align*}
mod $\mathcal{U}(\mathfrak{g})\mathfrak{n}_+$, respectively.
\end{Lem}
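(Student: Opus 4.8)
The plan is to prove the two displayed identities in parallel, since the second is obtained from the first by replacing the root vectors and Cartan element $\{E_{10},H_{10},F_{10}\}$ attached to $\alpha$ by $\{E_{11},H_{11},F_{11}\}$ attached to $\alpha+\beta$, and simultaneously replacing $F_{31}$ by $F_{32}$; I will carry out the argument for the first identity and then observe that the second is verbatim analogous. Throughout I read $(E_{10}^nF_{31}^n)_L$ as the composite $(E_{10}^n)_L\circ(F_{31}^n)_L$, and I begin with the weight bookkeeping that will be needed at the end. Since $[a],[b],[c]$ have weights $\theta^{\vee},2\theta^{\vee},3\theta^{\vee}$ and $n=p+2q+3r$, the monomial $[a]^p[b]^q[c]^r$ has weight $n\theta^{\vee}=n(2\alpha+\beta)$; applying $(F_{31}^n)_L$ subtracts $n(3\alpha+\beta)$ and applying $(E_{10}^n)_L$ adds $n\alpha$, so the total is weight $0$. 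Thus $(E_{10}^nF_{31}^n)_L([a]^p[b]^q[c]^r)\in\mathcal{U}(\frak g)_0$, and the same computation with $p=0$ shows that $(E_{10}^{n-p}F_{31}^{n-p})_L([b]^q[c]^r)$ is zero-weight as well. This is what will eventually license the use of Lemma \ref{lem:app:b1}(2).

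The key first step is an \emph{exact} factorization of the inner operator. I expand $(F_{31}^n)_L([a]^p[b]^q[c]^r)$ by the multinomial Leibniz rule of Lemma \ref{lem:product}, applied to the $p+1$ factors consisting of the $p$ copies of $[a]$ together with the single block $[b]^q[c]^r$. The sharp vanishing statements of Lemma \ref{lem:app:b2}, namely $(F_{31}^2)_L[a]=(F_{31}^3)_L[b]=(F_{31}^4)_L[c]=0$, bound the number of copies of $F_{31}$ each factor can absorb: at most $1$ for each $[a]$, and (by the same capacity count inside the block) at most $2q+3r=n-p$ for $[b]^q[c]^r$. Since these maxima sum to exactly $n$, every factor is forced to absorb its maximum, so exactly one distribution survives: each $[a]$ is hit once, yielding $(F_{31})_L[a]=F_{10}$, and the block absorbs the remaining $n-p$ copies. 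With multinomial coefficient $n!/(n-p)!$ this gives the exact identity
\[
(F_{31}^n)_L\big([a]^p[b]^q[c]^r\big)=\frac{n!}{(n-p)!}\,F_{10}^{\,p}\,(F_{31}^{n-p})_L\big([b]^q[c]^r\big),
\]
with no leftover terms, because an under-filled $[a]$ would force the block above its capacity and hence vanish.

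It then remains to apply the outer operator $(E_{10}^n)_L$ and peel off the factor $F_{10}^{\,p}$. Writing $Y=(F_{31}^{n-p})_L([b]^q[c]^r)$, I invoke Lemma \ref{lem:app:b1}(3) with $ij=10$ and parameter $r=p$ in the generic range $0<p<n$, and Lemma \ref{lem:app:b1}(1) in the boundary case $p=n$ (the case $p=0$ being trivial), to obtain
\[
(E_{10}^n)_L\big(F_{10}^{\,p}Y\big)\equiv \frac{n!}{(n-p)!}\,(H_{10}-n+1)\cdots(H_{10}-n+p)\,(E_{10}^{\,n-p})_L Y
\]
modulo $F_{10}\,\mathcal{U}(\frak g)+\mathcal{U}(\frak g)E_{10}$, where the $H_{10}$-factors may be reordered freely since they commute. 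Combining this with the prefactor $n!/(n-p)!$ from the previous display, and recognizing $(E_{10}^{\,n-p})_L Y=(E_{10}^{n-p}F_{31}^{n-p})_L([b]^q[c]^r)$, reproduces precisely the right-hand side of the asserted congruence.

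The final task is to discard the error term. The part in $\mathcal{U}(\frak g)E_{10}\subseteq\mathcal{U}(\frak g)\frak n_+$ is immediate. For the part lying in $F_{10}\,\mathcal{U}(\frak g)\subseteq\frak n_-\,\mathcal{U}(\frak g)$, I use the weight remark from the first paragraph: both $(E_{10}^nF_{31}^n)_L([a]^p[b]^q[c]^r)$ and the explicit $H_{10}$-term are zero-weight, so their difference is a zero-weight element of $F_{10}\,\mathcal{U}(\frak g)+\mathcal{U}(\frak g)E_{10}$; projecting onto the zero-weight component, the $\mathcal{U}(\frak g)E_{10}$ summand stays in $\mathcal{U}(\frak g)\frak n_+$, and the $F_{10}\,\mathcal{U}(\frak g)$ summand is then a zero-weight element of $\frak n_-\,\mathcal{U}(\frak g)$, which by Lemma \ref{lem:app:b1}(2) also lies in $\mathcal{U}(\frak g)\frak n_+$. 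I expect this last passage — trading the $\frak n_-$-leftover for a $\frak n_+$-leftover through the zero-weight duality of Lemma \ref{lem:app:b1}(2), which is exactly what makes the weight tracking indispensable — to be the only genuinely delicate point; the decisive simplification is the exact factorization of the second paragraph, whose capacity argument removes all cross terms \emph{before} the Cartan part is extracted. The second identity follows by the identical argument with $F_{31},E_{10},H_{10},F_{10}$ replaced by $F_{32},E_{11},H_{11},F_{11}$, using the corresponding formulas and vanishing relations of Lemma \ref{lem:app:b2}.
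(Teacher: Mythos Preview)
The paper does not supply its own proof of this lemma; it is quoted from \cite{AL} and stated without argument in Appendix~B. So there is no in-paper proof to compare against, and the relevant question is simply whether your argument is sound.

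It is. Your proof is the natural one given the surrounding lemmas, and each step checks out. The capacity argument in your second paragraph is exactly right: since $(F_{31}^2)_L[a]=(F_{31}^3)_L[b]=(F_{31}^4)_L[c]=0$ by Lemma~\ref{lem:app:b2}, the multinomial expansion of Lemma~\ref{lem:product} forces every factor to absorb its maximal number of $F_{31}$'s, leaving the single term $\tfrac{n!}{(n-p)!}F_{10}^{\,p}(F_{31}^{n-p})_L([b]^q[c]^r)$ with no remainder. (Your parenthetical ``by the same capacity count inside the block'' is doing real work here---it encodes the nested application of Lemma~\ref{lem:product} showing $(F_{31}^k)_L([b]^q[c]^r)=0$ for $k>2q+3r$---and could be made explicit in one line.) The subsequent use of Lemma~\ref{lem:app:b1}(3) to strip off $F_{10}^{\,p}$, together with the separate treatment of the boundary cases $p=0$ and $p=n$ via Lemma~\ref{lem:app:b1}(1), is correct. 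Finally, your handling of the error term is the right manoeuvre: both $F_{10}\,\mathcal{U}(\frak g)$ and $\mathcal{U}(\frak g)E_{10}$ are $Q$-graded, so a weight-zero element of their sum decomposes into weight-zero pieces, and Lemma~\ref{lem:app:b1}(2) then pushes the $\frak n_-$-part across to $\mathcal{U}(\frak g)\frak n_+$. The second identity is indeed verbatim analogous under the substitution you indicate.
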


Let us briefly introduce some notation which will help to simplify the statement of the final lemma.  Recall the PBW grading on $\mathcal{S}(\mathfrak h)$ (see e.g. \cite{D}):
\[
\mathcal{S}(\mathfrak h)\ =\ \mathcal{S}(\mathfrak h)^0 \oplus \mathcal{S}(\mathfrak h)^1 \oplus \mathcal{S}(\mathfrak h)^2 \oplus \cdots,
\]
where $\mathcal{S}(\mathfrak h)^0 = \mathbb{C} 1$, $\mathcal{S}(\mathfrak h)^1= \mathfrak h$, and $\mathcal{S}(\mathfrak h)^{n+1} = \mathfrak h\ \mathcal{S}(\mathfrak h)^n$ for $n \in \mathbb{Z}_{ \ge 1}$.  Given $f \in \mathcal{S}(\mathfrak h)$, we write $\mathrm{deg}\ f = n$, if 
$f \in \oplus_{j=0}^n \mathcal{S}(\mathfrak h)^j$ but $f \notin \oplus_{j=0}^{n-1}\mathcal{S}(\mathfrak h)^j$.

Let $\pi_0: \mathfrak g = \mathfrak n_- \oplus \mathfrak h \oplus \mathfrak n_+ \rightarrow \mathfrak h$ be the projection which acts as identity on $\mathfrak h$ and sends $\mathfrak n_{\pm}$ to zero.  
  Also denote by $\pi_0$ the corresponding induced homomorphism $\pi_0: \mathcal{U}(\mathfrak g) \rightarrow \mathcal{S}(\mathfrak h)$.  
\begin{Rmk}\label{rmk:app:b}
Suppose $f \in \mathcal{U}(\mathfrak g)$ and $wt(f)=0$, where $wt(\cdot)$ denotes weight of a homogeneous element with respect to the grading by the root lattice $Q$.  Then $\pi_0(f)$ is the unique polynomial in $\mathcal{S}(\mathfrak h)$ which is congruent to $f$ modulo $\mathcal{U}(\mathfrak g) \mathfrak n_+$.
\end{Rmk}

We now give the final lemma of the appendix.
\begin{Lem}\label{lem:app:b}
Suppose $m \in \mathbb{Z}_{\ge 1},$ $q,r \in \mathbb{Z}_{\ge 0},$ and $m=2q+3r$.  Then the following hold in $\mathcal{S}(\mathfrak h)$:
\begin{enumerate}
\item  $\phantom{L} \pi_0\big( (E_{10}^{m} F_{31}^{m})_L ([b]^q [c]^r) \big) = 0$, \medskip

\item  $\phantom{L} \mathrm{deg}\ \pi_0\big( (E_{11}^{m} F_{32}^{m})_L ([b]^q [c]^r) \big)\  \leq\ m-1$.
\end{enumerate}
\end{Lem}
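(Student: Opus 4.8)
The plan is to treat (1) and (2) together, first collapsing each side to a single explicit element. Since $(F_{31}^k)_L[b]=0$ for $k\ge 3$ and $(F_{31}^l)_L[c]=0$ for $l\ge 4$ by Lemma~\ref{lem:app:b2}, while $[b]^q[c]^r$ is a product of $q$ copies of $[b]$ and $r$ copies of $[c]$, the constraint $\sum k_i+\sum l_j=m=2q+3r$ in the multinomial expansion of Lemma~\ref{lem:product} forces $k_i=2$ for every $i$ and $l_j=3$ for every $j$. Substituting the explicit values from Lemma~\ref{lem:app:b2} yields
\[
(F_{31}^m)_L\big([b]^q[c]^r\big)=m!\,X^qY^r,
\]
with $X=F_{21}E_{01}-F_{31}E_{11}$ and $Y=F_{31}(H_{32}+1)E_{01}+F_{32}E_{01}^2-F_{31}^2E_{32}$, and likewise $(F_{32}^m)_L\big([b]^q[c]^r\big)=m!\,\tilde X^q\tilde Y^r$ with $\tilde X=F_{32}E_{10}-F_{21}F_{01}$ and $\tilde Y=F_{32}^2E_{31}-F_{32}F_{01}(H_{31}+2)-F_{31}F_{01}^2$. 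Hence $(E_{10}^mF_{31}^m)_L([b]^q[c]^r)=m!\,(E_{10}^m)_L(X^qY^r)$ and $(E_{11}^mF_{32}^m)_L([b]^q[c]^r)=m!\,(E_{11}^m)_L(\tilde X^q\tilde Y^r)$, and both are homogeneous of weight $0$, as one checks from $wt([b])=2(2\alpha+\beta)$ and $wt([c])=3(2\alpha+\beta)$.

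For (1), observe that every term of $X$ and of $Y$ ends in one of $E_{01},E_{11},E_{32}\in\mathfrak n_+$, so $X,Y\in\mathcal U(\mathfrak g)\mathfrak n_+$; since $m\ge 1$ we have $q+r\ge 1$, so also $X^qY^r\in\mathcal U(\mathfrak g)\mathfrak n_+$. As $(E_{10})_L$ preserves the left ideal $\mathcal U(\mathfrak g)\mathfrak n_+$, so does $(E_{10}^m)_L$, and therefore $(E_{10}^mF_{31}^m)_L([b]^q[c]^r)\in\mathcal U(\mathfrak g)\mathfrak n_+$. Being of weight $0$, it is annihilated by $\pi_0$ by Remark~\ref{rmk:app:b}, which is (1).

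For (2), the elements $\tilde X,\tilde Y$ are no longer in $\mathcal U(\mathfrak g)\mathfrak n_+$, so I would instead pass to the associated graded algebra $\mathrm{gr}\,\mathcal U(\mathfrak g)\cong\mathcal S(\mathfrak g)$ for the PBW filtration, on which $(E_{11}^m)_L$ induces $D^m$, where $D$ is the derivation of $\mathcal S(\mathfrak g)$ extending $x\mapsto[E_{11},x]$ for $x\in\mathfrak g$. The element $\tilde X^q\tilde Y^r$ has PBW degree $\le m$ with leading symbol $\overline{\tilde X}{}^q\,\overline{\tilde Y}{}^r$ of degree exactly $m$ since $\mathcal S(\mathfrak g)$ is a domain; hence $\pi_0\big((E_{11}^mF_{32}^m)_L([b]^q[c]^r)\big)$ has degree $\le m-1$ once we know that $D^m\big(\overline{\tilde X}{}^q\,\overline{\tilde Y}{}^r\big)$ contains no monomial lying in $\mathcal S(\mathfrak h)$. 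For this I would use two weight facts in $G_2$: the only root equal to $-(\alpha+\beta)$ is the root of $F_{11}$, and $2(\alpha+\beta)$ is not a root. The first says $D$ carries a root vector into $\mathfrak h$ exactly when that root vector is $F_{11}$ (on Cartan elements $D$ returns a scalar multiple of $E_{11}$); the second says $D$ never produces an $F_{11}$-term. Since none of the letters occurring in $\overline{\tilde X}$ or $\overline{\tilde Y}$ is $F_{11}$, and each factor $\overline{\tilde X}$, $\overline{\tilde Y}$ contributes at least two non-Cartan letters, an induction on the number of applications of $D$ shows that every monomial appearing in $D^m\big(\overline{\tilde X}{}^q\,\overline{\tilde Y}{}^r\big)$ contains no $F_{11}$ and still contains at least one non-Cartan letter; in particular none of them lies in $\mathcal S(\mathfrak h)$.

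The reduction and part (1) are soft. The real obstacle is in part (2): recognizing $F_{11}$ as the unique ``dangerous'' letter --- the only one that $D=(E_{11})_L$ turns into a Cartan element --- and verifying the $G_2$-specific weight facts which guarantee both that $F_{11}$ is absent from the leading symbol $\overline{\tilde X}{}^q\,\overline{\tilde Y}{}^r$ and that it is never created by $D$, so that the induction on $D$-applications closes.
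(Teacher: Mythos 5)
Your argument is correct and follows essentially the same route as the paper: collapse $(F_{31}^m)_L$ and $(F_{32}^m)_L$ of $[b]^q[c]^r$ via Lemmas \ref{lem:product} and \ref{lem:app:b2}, handle (1) by membership in $\mathcal{U}(\mathfrak g)\mathfrak n_+$ together with Remark \ref{rmk:app:b}, and settle (2) by observing that $F_{11}$ never occurs, which is exactly the paper's punchline that the expansion contains no term $F_{11}^m$. The only difference is that you prove, via the associated graded algebra $\mathcal{S}(\mathfrak g)$ and the derivation induced by $\mathrm{ad}\,E_{11}$, the degree estimate that the paper states as ``it is not difficult to see,'' which is a welcome but not essentially different elaboration.
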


\begin{proof}
Since we have $(F_{31}^3)_L[b] = (F_{31}^4)_L[c] = 0$ by Lemma \ref{lem:app:b2}, it follows from Lemma \ref{lem:product} that
\begin{equation*}
(F_{31}^m)_L( [b]^q [c]^r ) = \frac{m!}{(2!)^q (3!)^r } \left(  (F_{31}^2)_L [b] \right)^q \left( (F_{31}^3)_L [c] \right)^r.
\end{equation*}
From Lemma \ref{lem:app:b2}, we also have $\tfrac{1}{2!}(F_{31}^2)_L [b] \in \mathcal{U}(\mathfrak g)\mathfrak n_+$ and $\tfrac{1}{3!}(F_{31}^3)_L [c] \in \mathcal{U}(\mathfrak g)\mathfrak n_+$.  Since $n>0$, either $q>0$ or $r>0$.  Thus $(F_{31}^m)_L\left( [b]^q [c]^r \right)\equiv 0$, mod $\mathcal{U}(\mathfrak g)\mathfrak n_+$.  The first part of the lemma then follows from Remark \ref{rmk:app:b}.

To prove the second equation, we first have $(F_{32}^3)_L[b] = (F_{32}^4)_L[c]=0$, by Lemma \ref{lem:app:b2} as before.  We then also have
\begin{equation*}
(F_{32}^3)_L ([b]^q [c]^r) = \frac{m!}{(2!)^q(3!)^r} \left(  (F_{32}^2)_L [b]  \right)^q \left(  (F_{32}^3)_L [c]\right)^r,
\end{equation*}
by Lemma \ref{lem:product}.  Before proceeding further with this case, we will need an additional fact.

Suppose $m' \in \mathbb{Z}_{\ge 1}$; $m' \leq m$; and $X_1, \dots, X_{m'} \in \mathcal{U}(\mathfrak g)$ are basis elements from (\ref{eq:basis}) such that 
\begin{equation}\label{eq:app:b}
wt(E_{11}^m X_1 \cdots X_{m'}) = 0,
\end{equation} 
with respect to the root lattice $Q$.  Then it is not difficult to see that
\begin{equation*}
\mathrm{deg}\  \pi_0\big( (E_{11}^m)_L (X_1 \cdots X_{m'})\big) \leq m,
\end{equation*}
and equality holds if and only if $m'=m$ and $X_1 = \cdots = X_{m} = F_{11}$.  

From the preceding paragraph, it is then sufficient to show  the product
 \begin{equation}\label{eq:app:b1}
 \left(\tfrac{1}{2!}(F_{32}^2)_L [b] \right)^q \left( \tfrac{1}{3!}(F_{31}^3)_L [c]\right)^r
\end{equation}
can be written as a linear combination of terms  $X_{1} \cdots X_{m'} \neq F_{11}^m$, where $X_{1}, \dots, X_{m'}$ satisfy condition (\ref{eq:app:b}).  
From Lemma \ref{lem:app:b2} we have
\begin{equation}\label{eq:app:b2}
\tfrac{1}{2!}(F_{32}^2)_L [b] = F_{32}E_{10} - F_{21}F_{01}
\end{equation}
and
\begin{align}
\tfrac{1}{3!}(F_{31}^3)_L [c] &=\ F_{32}^2 E_{10} - F_{32}F_{01}(H_{31}+2) - F_{31} F_{01}^2 \nonumber \\ 
& =\ F_{32}^2E_{31} - 2F_{32}F_{01} - \tfrac 1 3  F_{32}F_{01}H_{10}  -  \tfrac 1 3  F_{32}F_{01}H_{21} - F_{31}F_{01}^2\label{eq:app:b3}, 
\end{align}
since $H_{31} = H_{10}+H_{01} = \tfrac 1 3 (H_{10} + H_{21})$.  Now each term appearing in the linear combination (\ref{eq:app:b2}) (resp. (\ref{eq:app:b3})) satisfies (\ref{eq:app:b}) with $m=2$ (resp. $m=3$).  It is then clear that the product (\ref{eq:app:b1}) may be written as a linear combination of terms $X_1\cdots X_{m'}$ ($\neq F_{11}^m$) of the form (\ref{eq:app:b}) with $m=2q+3r$.
\end{proof}

 \begin{Rmk}
We note a correction to \cite{AL}.  On page 221, the first congruence in Lemma B.14 should read:
\[ (E_{21}^4 F_{21}^8)_L ([b]^2) \equiv 4! 8! (2 H_{21}(H_{11}-1) + 2H_{10}(H_{10}-1) - 6H_{01}(H_{01}+1)),\]
mod $\mathcal{U}(\mathfrak g) \mathfrak n_+$.  Similarly on page 222, the  congruence appearing in the last line of the proof of Lemma B.14 should also be changed accordingly.
\end{Rmk}

\bibliographystyle{siam}

\end{document}